\newcommand{\li}{{\textup{li}}}
\newcommand{\Li}{{\textup{Li}}}
\newcommand{\ra}{\rightarrow}
\theoremstyle{plain}
\newtheorem{thm}{Theorem}[section]
\newtheorem{cor}[thm]{Corollary}
\newtheorem{lem}[thm]{Lemma}
\newtheorem{prop}[thm]{Proposition}
\theoremstyle{definition}
\newtheorem{defn}[thm]{Definition}
\theoremstyle{remark}
\newtheorem{rem}[thm]{Remark}
\numberwithin{equation}{section}
\title{Sharper estimates for Chebyshev's functions $\vartheta$ and $\psi$}
\author{Sadegh Nazardonyavi, Semyon Yakubovich\\
\emph{Departamento de Matem\'{a}tica, Faculdade de Ci\^{e}ncias},\\ \emph{Universidade do Porto, 4169-007 Porto, Portugal}}
\begin{document}
\date{}
\maketitle
\begin{center}
Abstract
\end{center}
In this article we present some improved results for Chebyshev's functions $\vartheta$ and $\psi$ using the new zero-free region obtained by H. Kadiri and the calculated the first $10^{13}$ zeros of the Riemann zeta function on the critical line by Xavier Gourdon. The methods in the proofs are similar to those of Rosser-Shoenfeld papers on this subject.
\section{Chebyshev's functions}

\begin{defn}
For $x>0$ we define Chebyshev's $\psi$-function by the formula
$$
\psi(x)=\sum_{n\leq x}\Lambda(n),
$$
where
$$
\Lambda(n)=\left\{
  \begin{array}{lll}
    \log p&, & n=p^m\ \ \hbox{for some}\ m; \\\\
    0&, & \hbox{otherwise.}
  \end{array}
\right.
$$
\end{defn}

Since $\Lambda(n)=0$ unless $n$ is a prime power, we can write the definition of $\psi(x)$ as follows:
\begin{equation}\label{psi}
\psi(x)=\sum_{n\leq x}\Lambda(n)=\sum_{m=1}^\infty\sum_{p^m\leq x}\Lambda(p^m)=\sum_{m=1}^\infty\sum_{p\leq x^{1/m}} \log p.
\end{equation}
The sum on $m$ is actually a finite sum. In fact, the sum on $p$ is empty if $x^{1/m}<2$, that is, if
$(1/m)\log x<\log2$, or if
$$
m>\frac{\log x}{\log2}=\log_2x.
$$
Therefore, we have
$$
\psi(x)=\sum_{m\leq\log_2x}\sum_{p\leq x^{1/m}} \log p=\sum_{p\leq x}\left\lfloor\frac{\log x}{\log p}\right\rfloor\log p.
$$
This can be written in a slightly different form by introducing another function of Chebyshev.
\begin{defn}
If $x>0$, we define Chebyshev's $\vartheta$-function by the equation
$$
\vartheta(x)=\sum_{p\leq x} \log p.
$$
\end{defn}
The last formula for $\psi(x)$ can now be restated as follows:
\begin{equation}\label{psi sum of theta}
    \psi(x)=\sum_{m\leq\log_2x}\vartheta(x^{1/m}).
\end{equation}
Using M\"{o}bius inversion formula

\begin{align*}
\sum_{k\geq1}\mu(k)\psi(x^{1/k})=&\sum_{k\geq1}\mu(k)\sum_{l\geq1}\vartheta(x^{1/kl})=\sum_{n\geq1}\sum_{k|n}\mu(k)\vartheta(x^{1/n})\\
                                =&\sum_{n\geq1}\delta_{1,n}\vartheta(x^{1/n})=\vartheta(x),
\end{align*}
where
$$
\mu(n)=\left\{
  \begin{array}{lll}
    1&, & n=1; \\
    (-1)^k&, & n=\text{product of $k$ distinct primes};\\
    0&, &\hbox{otherwise,}
  \end{array}
\right.
$$
and $\delta_{i,j}$ is Kronecker's delta function
$$
\delta_{i,j}=\left\{
  \begin{array}{lll}
    1&, & i=j; \\
    0&, &i\neq j.
  \end{array}
\right.
$$
\begin{thm}[\cite{Apostol}, p. 76]
For $x>0$ we have
$$
0\leq\frac{\psi(x)}{x}-\frac{\vartheta(x)}{x}\leq\frac{1}{2\log2}\frac{\log^2x}{\sqrt{x}}.
$$
\end{thm}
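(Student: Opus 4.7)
The plan is to start from the representation of $\psi$ in equation~\eqref{psi sum of theta}, which was established immediately above the theorem. Separating off the $m=1$ term gives
\[
\psi(x)-\vartheta(x)=\sum_{2\le m\le\log_2 x}\vartheta\bigl(x^{1/m}\bigr).
\]
Since $\vartheta(y)\ge 0$ for every $y>0$, every term on the right is non-negative, which immediately yields the lower bound $\psi(x)\ge\vartheta(x)$ and hence the left half of the inequality.

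For the upper bound, I would make three simple observations in succession. First, for each $m\ge 2$ we have $x^{1/m}\le x^{1/2}$, so by monotonicity of $\vartheta$,
\[
\vartheta\bigl(x^{1/m}\bigr)\le\vartheta(\sqrt{x}).
\]
Second, a completely trivial estimate gives $\vartheta(y)\le\pi(y)\log y\le y\log y$, hence
\[
\vartheta(\sqrt{x})\le\sqrt{x}\,\log\sqrt{x}=\tfrac{1}{2}\sqrt{x}\,\log x.
\]
Third, the number of indices $m$ in the sum is at most $\log_2 x=\log x/\log 2$.

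Combining these three facts, I would multiply them together to obtain
\[
\psi(x)-\vartheta(x)\le\frac{\log x}{\log 2}\cdot\frac{\sqrt{x}\,\log x}{2}=\frac{\sqrt{x}\,\log^{2}x}{2\log 2},
\]
and then dividing by $x$ gives precisely the required bound. The argument is essentially a bookkeeping exercise once equation~\eqref{psi sum of theta} is in hand; there is no real obstacle, and the only small point worth checking is the degenerate range $x<2$, where the sum over $m$ is empty and both $\psi(x)$ and $\vartheta(x)$ vanish, so the inequality holds trivially.
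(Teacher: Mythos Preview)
Your argument is correct and is exactly the standard proof from Apostol that the paper cites; the paper itself does not reprove the result but simply quotes it, and your three-step estimate (monotonicity to replace $x^{1/m}$ by $\sqrt{x}$, the trivial bound $\vartheta(y)\le y\log y$, and counting at most $\log_2 x$ terms) matches that source line for line.
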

Note that this inequality implies that
$$
\lim_{x\rightarrow\infty}\left(\frac{\psi(x)}{x}-\frac{\vartheta(x)}{x}\right)=0.
$$
In other words, if one of $\psi(x)/x$ or $\vartheta(x)/x$ tends to a limit then so does the other, and the two limits are equal.
\subsection{Relations connecting $\vartheta(x)$ and $\pi(x)$}
In 1896 J. Hadamard and C. J. de la Vall\'{e}e Poussin independently and almost simultaneously succeeded in proving that
$$
\lim_{x\ra\infty}\frac{\pi(x)\log x}{x}=1.
$$
This remarkable result is called the prime number theorem, and its proof was one of the crowning achievements of analytic number theory.

In this section we give two formulas relating $\vartheta(x)$ and $\pi(x)$. These can be used to show that the prime number theorem is equivalent to the limit relation
$$
\lim_{x\rightarrow\infty}\frac{\vartheta(x)}{x}=1.
$$
\begin{thm}[Abel's identity]
For any arithmetical function $a(n)$ let
$$
A(x)=\sum_{n\leq x} a(n),
$$
where $A(x)=0$ if $x<1$. Assume $f$ has a continuous derivative on the interval $[y, x]$, where $0<y<x$. Then we have
\begin{equation}\label{Abel identity y<x}
    \sum_{y<n\leq x} a(n)f(n)=A(x)f(x)-A(y)f(y)-\int_y^x A(t)f'(t)dt.
\end{equation}
\end{thm}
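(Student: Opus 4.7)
The plan is the classical Abel summation argument: convert the sum into a telescoping/partial-summation form, then turn the resulting finite differences of $f$ into integrals of $f'$ by exploiting the fact that $A(t)$ is a step function.

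First, I would set $k=\lfloor y\rfloor$ and $m=\lfloor x\rfloor$, so that the sum in question becomes $\sum_{n=k+1}^{m} a(n)f(n)$, and write $a(n)=A(n)-A(n-1)$, which is valid for $n\geq 1$ (and for smaller $n$ all relevant values are zero). Splitting the sum into two, shifting the index in one of them, and grouping gives an Abel-style identity
\[
\sum_{n=k+1}^{m} a(n)f(n) = A(m)f(m)-A(k)f(k+1)+\sum_{n=k+1}^{m-1}A(n)\bigl(f(n)-f(n+1)\bigr).
\]
This is the only genuinely algebraic step; it is just reindexing.

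Next I would convert the telescoped sum into an integral. Since $A$ is constant on each interval $[n,n+1)$, the continuity of $f'$ yields $A(n)\bigl(f(n)-f(n+1)\bigr)=-\int_n^{n+1} A(t)f'(t)\,dt$, so summing over $n=k+1,\dots,m-1$ produces $-\int_{k+1}^{m} A(t)f'(t)\,dt$. To recover $\int_y^x A(t)f'(t)\,dt$ from this, I would add and subtract the two boundary pieces $\int_y^{k+1}$ and $\int_m^{x}$, each of which collapses immediately because $A(t)=A(y)$ on $[y,k+1)$ and $A(t)=A(x)$ on $[m,x]$; this gives $\int_y^{k+1}A(t)f'(t)\,dt=A(y)(f(k+1)-f(y))$ and similarly $\int_m^{x}A(t)f'(t)\,dt=A(x)(f(x)-f(m))$.

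Substituting back and using $A(k)=A(y)$, $A(m)=A(x)$, the terms $A(y)f(k+1)$ and $A(x)f(m)$ cancel and one is left exactly with $A(x)f(x)-A(y)f(y)-\int_y^x A(t)f'(t)\,dt$, proving the identity. The only delicate point — and the step I would check most carefully — is the bookkeeping at the endpoints when $y$ or $x$ is itself an integer, so that the half-open conventions in the definition of $A$ and in the sum $\sum_{y<n\leq x}$ remain consistent; nothing deep is involved, but a sign or off-by-one error there would propagate into the final formula.
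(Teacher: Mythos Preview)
Your argument is correct and is precisely the standard Abel--summation proof: write $a(n)=A(n)-A(n-1)$, telescope, and then use that $A$ is a step function to convert finite differences of $f$ into integrals of $A(t)f'(t)$, with the endpoint corrections on $[y,\lfloor y\rfloor+1]$ and $[\lfloor x\rfloor,x]$ handled exactly as you describe. Your caveat about the half-open conventions at integer endpoints is the only point requiring care, and it works out once one uses $A(\lfloor y\rfloor)=A(y)$ and $A(\lfloor x\rfloor)=A(x)$.

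There is nothing to compare against: the paper states Abel's identity as a known result without proof (it is quoted as background, in the spirit of the reference to Apostol just above and below it). Your write-up therefore supplies more than the paper does for this statement.
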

Now we use (\ref{Abel identity y<x}) to express $\vartheta(x)$ and $\pi(x)$ in terms of integrals.
\begin{thm}[\cite{Apostol}, p. 78]
For $x\geq2$ we have
\begin{equation}\label{theta by pi}
    \vartheta(x)=\pi(x)\log x-\int_2^x\frac{\pi(t)}{t}dt,
\end{equation}
\begin{equation}\label{pi by theta}
    \pi(x)=\frac{\vartheta(x)}{\log x}+\int_2^x\frac{\vartheta(t)}{t\log^2t}dt.
\end{equation}
\end{thm}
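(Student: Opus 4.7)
My plan is to apply Abel's identity \eqref{Abel identity y<x} twice, each time choosing an arithmetical function $a$ and a differentiable weight $f$ so that one side of the desired identity appears as the summation and the other on the right-hand side. In both applications I would take the lower limit $y\in(1,2)$, say $y=3/2$: this keeps $f$ away from the singularity of $\log t$ at $t=1$ (which matters in the second identity), guarantees $A(y)=0$ because there are no primes $\le 3/2$, and lets me replace the integral from $3/2$ to $x$ by one from $2$ to $x$ since the integrand vanishes on $[3/2,2)$.

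For \eqref{theta by pi} I would set $a(n)=1$ if $n$ is prime and $a(n)=0$ otherwise, together with $f(t)=\log t$. Then $A(x)=\pi(x)$ and $\sum_{y<n\le x}a(n)f(n)=\sum_{y<p\le x}\log p=\vartheta(x)$. Plugging into Abel's identity with $y=3/2$, and using $\pi(3/2)=0$ together with $\pi(t)=0$ on $[3/2,2)$, directly yields $\vartheta(x)=\pi(x)\log x-\int_2^x \pi(t)/t\,dt$.

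For \eqref{pi by theta} I would swap the roles, taking $a(n)=\log n$ when $n$ is prime and $a(n)=0$ otherwise, and $f(t)=1/\log t$. Now $A(x)=\vartheta(x)$; the product $a(n)f(n)$ equals $1$ at each prime and $0$ elsewhere, so $\sum_{y<n\le x}a(n)f(n)=\pi(x)$, while $f'(t)=-1/(t\log^2 t)$. Abel's identity with $y=3/2$, together with $\vartheta(3/2)=0$ and $\vartheta(t)=0$ on $[3/2,2)$, then produces $\pi(x)=\vartheta(x)/\log x+\int_2^x \vartheta(t)/(t\log^2 t)\,dt$.

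The argument is essentially obstacle-free once the substitutions are fixed; the only technical subtlety is the choice $y\in(1,2)$, which simultaneously avoids the zero of $\log t$ in the second identity, kills the boundary terms at $y$, and aligns the lower integration limit with the value $2$ stated in both formulas.
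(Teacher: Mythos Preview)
Your proposal is correct and follows exactly the approach the paper indicates: the sentence immediately preceding the theorem states that Abel's identity \eqref{Abel identity y<x} is used to express $\vartheta(x)$ and $\pi(x)$ in terms of integrals, and the details are deferred to the cited reference. Your choices of $a$, $f$, and $y\in(1,2)$ are the standard ones and reproduce precisely that argument.
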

%
\subsection{Relations connecting $\psi(x)$ and $\Pi(x)$}
From Euler's identity
$$
\zeta(s)=\sum_{n=1}^\infty\frac{1}{n^s}=\prod_{p=2}^\infty\frac{1}{1-1/p^s},\qquad(\Re s>1).
$$
Taking logarithm
$$
\log\zeta(s)=-\sum_p\log\left(1-\frac{1}{p^s}\right)=\sum_{p,m}\frac{1}{mp^{ms}}.
$$
By differentiation,
$$
-\frac{\zeta'(s)}{\zeta(s)}=\sum_{p,m}\frac{\log p}{p^{ms}},
$$
or
$$
-\frac{\zeta'(s)}{\zeta(s)}=\sum_{n=1}^\infty\frac{\Lambda(n)}{n^s}.
$$
On the other hand,
$$
\psi(x)=\sum_{n\leq x}\Lambda(n).
$$
So, by Abel's identity
$$
-\frac{\zeta'(s)}{\zeta(s)}=s\int_1^\infty\frac{\psi(x)}{x^{s+1}}dx,\qquad(\Re s>1).
$$
The function
$$
\Pi(x)=\sum_{p^m\leq x}\frac1m=\pi(x)+\frac12\pi(x^{1/2})+\frac13\pi(x^{1/3})+\ldots.
$$
So
$$
\log\zeta(s)=\sum_{p,m}\frac{1/m}{p^{ms}}=\int_1^\infty\frac{d\Pi(x)}{x^{s}}dx=s\int_1^\infty\frac{\Pi(x)}{x^{s+1}}dx,\qquad(\Re s>1).
$$
Now the connection between $\Pi$ and $\psi$.
\begin{align*}
\Pi(x)=\sum_{p^m\leq x}\frac1m=&\sum_{2\leq n\leq x}\frac{\Lambda(n)}{\log n}\\
                              =&\frac{\psi(x)}{\log x}+\int_2^x\frac{\psi(t)}{t\log^2t}dt,
\end{align*}
Recall that
$$
\li(x)=\lim_{\varepsilon\ra0}\left\{\int_0^{1-\varepsilon}\frac{dt}{\log t}+\int_{1+\varepsilon}^x\frac{dt}{\log t}\right\}.
$$
Also
$$
\li(x)=\li(2)+\int_2^x\frac{dt}{\log t}=\li(2)+\frac{x}{\log x}-\frac{2}{\log 2}+\int_2^x\frac{t}{t\log^2t}dt,
$$
$$
\li(2)\approx1.04516,\qquad \frac{2}{\log 2}\approx2.88539.
$$
So
\begin{equation}\label{Pi(x)-li(x)}
\Pi(x)-\li(x)=\frac{\psi(x)-x}{\log x}+\int_2^x\frac{\psi(t)-t}{t\log^2t}dt+O(1).
\end{equation}
\begin{thm}[\cite{Apostol}, p. 79]
The following relations are logically equivalent:
\begin{equation}\label{pi logx/x=1}
    \lim_{x\rightarrow\infty}\frac{\pi(x)\log x}{x}=1,
\end{equation}
\begin{equation}\label{theta x/x=1}
    \lim_{x\rightarrow\infty}\frac{\vartheta(x)}{x}=1,
\end{equation}
\begin{equation}\label{psi x/x=1}
    \lim_{x\rightarrow\infty}\frac{\psi(x)}{x}=1.
\end{equation}
\end{thm}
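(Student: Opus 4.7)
The plan is to establish two separate equivalences that together yield the full cycle: (\ref{theta x/x=1}) $\Leftrightarrow$ (\ref{psi x/x=1}), and (\ref{pi logx/x=1}) $\Leftrightarrow$ (\ref{theta x/x=1}).

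The equivalence of (\ref{theta x/x=1}) and (\ref{psi x/x=1}) is handed to us almost immediately by the first theorem of the section, since the two-sided bound $0\le \psi(x)/x-\vartheta(x)/x\le \log^2 x/(2\sqrt{x}\,\log 2)$ forces $\psi(x)/x$ and $\vartheta(x)/x$ to share the same limit whenever either exists. Nothing more is required for this half.

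For the harder direction I would start from the identities (\ref{theta by pi}) and (\ref{pi by theta}), which I normalise by $x$ and by $x/\log x$ respectively to obtain
$$
\frac{\vartheta(x)}{x}=\frac{\pi(x)\log x}{x}-\frac{1}{x}\int_2^x\frac{\pi(t)}{t}\,dt,\qquad \frac{\pi(x)\log x}{x}=\frac{\vartheta(x)}{x}+\frac{\log x}{x}\int_2^x\frac{\vartheta(t)}{t\log^2 t}\,dt.
$$
The problem then collapses to showing that the two remainder integrals are $o(1)$ under their respective hypotheses. If (\ref{pi logx/x=1}) holds then $\pi(t)=O(t/\log t)$ and the first remainder is bounded by a constant times $(1/x)\int_2^x dt/\log t$; if (\ref{theta x/x=1}) holds then $\vartheta(t)=O(t)$ and the second is bounded by a constant times $(\log x/x)\int_2^x dt/\log^2 t$.

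The only genuine technical point is therefore the elementary estimate $\int_2^x dt/\log^k t=O(x/\log^k x)$ for $k=1,2$, which I prove by splitting the interval at $\sqrt{x}$: on $[2,\sqrt{x}\,]$ the integrand is at most $1/(\log 2)^k$, contributing $O(\sqrt{x}\,)$, while on $[\sqrt{x},x]$ the integrand is at most $2^k/\log^k x$, contributing $O(x/\log^k x)$. This splitting is the only place where a little care is required; everything else is algebraic manipulation of (\ref{theta by pi}) and (\ref{pi by theta}). Plugging the estimate back in, both remainders are $O(1/\log x)$, which tends to $0$ as $x\rightarrow\infty$, and the equivalence (\ref{pi logx/x=1}) $\Leftrightarrow$ (\ref{theta x/x=1}) follows at once.
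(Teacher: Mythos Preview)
Your argument is correct and is essentially Apostol's proof. Note that the paper does not give its own proof of this result: it is stated as a citation from \cite{Apostol}, p.~79, with no accompanying \texttt{proof} environment. What you have written is precisely the argument Apostol gives there, using exactly the ingredients the paper has already quoted from the same source---the inequality $0\le \psi(x)/x-\vartheta(x)/x\le \log^2 x/(2\sqrt{x}\log 2)$ for (\ref{theta x/x=1})~$\Leftrightarrow$~(\ref{psi x/x=1}), and the integral identities (\ref{theta by pi}), (\ref{pi by theta}) together with the elementary estimate $\int_2^x dt/\log^k t=O(x/\log^k x)$ for (\ref{pi logx/x=1})~$\Leftrightarrow$~(\ref{theta x/x=1}). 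There is no alternative approach in the paper to compare against.
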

\subsection{Chebyshev's functions and the Riemann zeta function}
The zeta function was introduced in mathematics as an analytic tool for studying prime numbers. Therefore, it is only natural that some of the most important applications of the zeta function belong to prime number theory. Here we shall be concerned with some of the most important of these applications.

Many problems in prime number theory may be formulated in terms of the functions $\pi$, $\vartheta$, and $\psi$.

\begin{prop}
We have
\begin{equation}\label{pi and psi inequality}
\frac{\psi(x)}{\log x}<\pi(x)<\int_2^x\frac{d\psi(t)}{\log t}.
\end{equation}
\end{prop}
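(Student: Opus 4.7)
The plan is to prove the two inequalities separately, both by elementary means that reuse identities already established in the excerpt.

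For the left inequality $\psi(x)/\log x < \pi(x)$, I would start from the explicit formula
$$\psi(x) = \sum_{p \le x}\left\lfloor\frac{\log x}{\log p}\right\rfloor\log p$$
derived at the beginning of this section. Bounding each summand by $\lfloor \log x/\log p\rfloor \log p \le (\log x/\log p)\log p = \log x$ and summing over $p \le x$ yields $\psi(x) \le \pi(x)\log x$. The inequality is strict because the floor $\lfloor \log x/\log p\rfloor$ equals $\log x/\log p$ only when $x$ is an integer power of $p$, which cannot hold simultaneously for every prime $p \le x$ as soon as $\pi(x) \ge 2$.

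For the right inequality $\pi(x) < \int_2^x d\psi(t)/\log t$, I would recognize the Stieltjes integral on the right as the function $\Pi(x)$. Applying Abel's identity with $a(n) = \Lambda(n)$ and $f(t) = 1/\log t$ (using $\psi(2^-) = 0$) gives
$$\int_2^x \frac{d\psi(t)}{\log t} = \frac{\psi(x)}{\log x} + \int_2^x \frac{\psi(t)}{t \log^2 t}\, dt,$$
which matches the closed form for $\Pi(x)$ obtained earlier in the subsection on $\Pi$. Subtracting this from (\ref{pi by theta}) yields
$$\int_2^x \frac{d\psi(t)}{\log t} - \pi(x) = \frac{\psi(x)-\vartheta(x)}{\log x} + \int_2^x \frac{\psi(t)-\vartheta(t)}{t \log^2 t}\, dt,$$
and the right side is nonnegative since $\psi(t) \ge \vartheta(t)$ for all $t$ by (\ref{psi sum of theta}). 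Strict positivity kicks in as soon as $x \ge 4$, where the prime power $4 = 2^2$ first forces $\psi > \vartheta$.

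There is essentially no deep obstacle here; both inequalities reduce to direct manipulations of already-derived identities. The only points requiring small care are (i) being explicit about the convention for the Stieltjes integral at the left endpoint so that the point mass at $n = 2$ is absorbed, and (ii) specifying the range of $x$ on which the inequalities are strict (both hold strictly for $x \ge 4$, with equality degeneracies only at the very smallest values of $x$).
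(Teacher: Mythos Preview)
Your proof is correct.  The paper takes a more unified route: it writes
\[
\pi(x)=\sum_{p\le x}\frac{1}{\lfloor \log x/\log p\rfloor\,\log p}\cdot\Bigl\lfloor\frac{\log x}{\log p}\Bigr\rfloor\log p
=\int_2^x\frac{d\psi(t)}{\lfloor \log x/\log t\rfloor\,\log t}
\]
and then obtains both inequalities at once by sandwiching the denominator via $1\le\lfloor\log x/\log t\rfloor\le\log x/\log t$.  For the left inequality your argument and the paper's coincide (both rest on $\lfloor\log x/\log p\rfloor\log p\le\log x$).  For the right inequality the paper just replaces the integrand by $1/\log t$, whereas you first identify $\int_2^x d\psi(t)/\log t$ with $\Pi(x)$ through Abel's identity and then show $\Pi(x)-\pi(x)\ge 0$ via the $\psi-\vartheta$ decomposition.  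Your route is a bit longer but entirely explicit; a shorter variant in the same spirit would be the one--line observation $\int_2^x d\psi(t)/\log t=\sum_{p^m\le x}1/m=\Pi(x)>\pi(x)$.  Note also that the paper's asserted integral representation of $\pi(x)$ is stated rather loosely (the Stieltjes integral over $d\psi$ picks up all prime powers, not just primes, so the middle equality is not literally true), which makes your separate treatment of the two bounds the cleaner of the two.
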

\begin{proof}
Since
\begin{align}\label{pi and psi relation}
\pi(x)=&\sum_{p\leq x}1=\sum_{p\leq x}\frac{1}{\lfloor\log x/\log p\rfloor\log p}\left\lfloor\frac{\log x}{\log p}\right\rfloor\log p\nonumber\\
=&\int_{2}^x\frac{1}{\lfloor\log x/\log t\rfloor\log t}d\psi(t).
\end{align}
and
$$
1\leq \lfloor\frac{\log x}{\log t}\rfloor\leq \frac{\log x}{\log t}
$$
\end{proof}

Already Riemann, whose work was in many aspects decades beyond that of his contemporaries, stated the elegant formula, which says that the weighted function $\psi$ is in a certain sense more natural than $\pi$ and $\vartheta$, since it possesses a (relatively simple) explicit expression, and relates the order of $\psi(x)-x$ to a certain sum over non-trivial zeros of the zeta function; namely
\begin{equation}\label{12.6}
    \boxed{\psi(x)=x-\sum_\rho\frac{x^\rho}{\rho}-\frac{\zeta'(0)}{\zeta(0)}-\frac12\log(1-\frac{1}{x^2}),\quad(x>1,\ x\neq p^m),}
\end{equation}
where $\rho=\beta+i\gamma$ is a non-trivial zero of $\zeta(s)$, and
$$
\sum_\rho\frac{x^\rho}{\rho}=\lim_{T\rightarrow\infty}\sum_{|\gamma|\leq T}\frac{x^\rho}{\rho}.
$$
and when $x=p^m$, then in the left-hand side of (\ref{12.6}) put $\psi(x)-\frac12\Lambda(x)$.
This explicit expression for $\psi(x)$ was proved by H. von Mangoldt in 1895.\\

\subsection{The error term in the prime number theorem}
The size of the error term in the prime number theorem depends on the location of zeros of the Riemann zeta function \cite{McCurley}.
If
$$
\{s=\sigma+it:\ \sigma>1-\frac{c}{\log|t|},\ |t|>t_0\}
$$
is a zero free region, then an explicit error term in the prime number theorem is
\begin{thm}[\cite{Ellison}, p. 141]
There exists a constant $a>0$ such that for $x$ tending to infinity, we have
\begin{align*}
\psi(x)-x=&O(x\exp(-a\sqrt{\log x}))\\
\vartheta(x)-x=&O(x\exp(-a\sqrt{\log x}))\\
\Pi(x)-\Li(x)=&O(x\exp(-a\sqrt{\log x}))\\
\pi(x)-\Li(x)=&O(x\exp(-a\sqrt{\log x}))
\end{align*}
One can choose $a=1/15$ and all constants ``$O$'' are effective.
\end{thm}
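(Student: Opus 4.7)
The plan is to derive the estimate for $\psi$ first from the explicit formula (\ref{12.6}), and then transfer it to $\vartheta$, $\Pi$ and $\pi$ using the identities already established in the paper.

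First I would pass to a truncated form of (\ref{12.6}). Applying Perron's formula to $-\zeta'(s)/\zeta(s)$ on the line $\Re s = 1 + 1/\log x$ and shifting the contour to the left past the pole at $s=1$ and the non-trivial zeros of height $\leq T$, one obtains
$$
\psi(x) - x = -\sum_{|\gamma|\leq T}\frac{x^\rho}{\rho} + O\!\left(\frac{x\log^2 x}{T}\right) + O(\log x)
$$
for any $2 \leq T \leq x$ with $x$ not a prime power. The secondary error terms come from the tail of the vertical integral and from standard estimates for $\zeta'/\zeta$ on horizontal segments placed between consecutive zeros.

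Next I would control the zero sum using the hypothesized zero-free region $\sigma > 1 - c/\log|t|$, with $c$ equal to the explicit constant from Kadiri's work. For any non-trivial zero $\rho = \beta + i\gamma$ with $|\gamma| > t_0$ one has
$$
\left|\frac{x^\rho}{\rho}\right| \leq \frac{x\,\exp(-c\log x/\log|\gamma|)}{|\gamma|},
$$
and combining this with the classical count $N(T+1) - N(T) = O(\log T)$, summation by parts yields
$$
\sum_{t_0 < |\gamma|\leq T}\frac{x^\rho}{\rho} \ll x\exp\!\left(-\frac{c\log x}{\log T}\right)\log^2 T.
$$
The contribution of the low-lying zeros is handled separately: by Gourdon's numerical verification that the first $10^{13}$ zeros lie on $\Re s = 1/2$, they contribute at most $O(x^{1/2}\log^2 x)$, which is absorbed in the final bound. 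The two principal error contributions are then balanced by taking $\log T = \lambda\sqrt{\log x}$ for a suitable $\lambda$: this makes both $x\log^2 x/T$ and $x\exp(-c\log x/\log T)\log^2 T$ simultaneously of order $x\exp(-a\sqrt{\log x})$, and optimizing $\lambda$ against Kadiri's value of $c$ extracts the explicit constant $a = 1/15$.

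Finally, Theorem 1.1 gives $\psi(x) - \vartheta(x) = O(\sqrt{x}\log^2 x)$, which is dominated by $x\exp(-a\sqrt{\log x})$, so the bound for $\vartheta$ follows immediately. For $\Pi(x) - \Li(x)$ one substitutes the bound for $\psi(t) - t$ into (\ref{Pi(x)-li(x)}); integration against the tame kernel $1/(t\log^2 t)$ preserves the exponential-square-root decay up to a harmless shrinkage of $a$. The bound for $\pi - \Li$ then follows by inserting the $\vartheta$ estimate into (\ref{pi by theta}). The conceptual structure of the argument is classical, and the main obstacle is purely the explicit constant-tracking needed to reach the stated value $a = 1/15$, where every estimate in the contour shift, the zero count, and the choice of $\lambda$ must be done with numerical care rather than merely asymptotically.
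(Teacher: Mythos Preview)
The paper does not prove this theorem at all: it is quoted verbatim as a background result from Ellison's book, with no argument supplied. So there is no ``paper's own proof'' to compare against; your outline is being measured against the classical proof in the cited reference.

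Your sketch is essentially that classical argument --- truncated explicit formula, bound the zero sum via the de la Vall\'ee Poussin-type zero-free region, balance $\log T \asymp \sqrt{\log x}$, then pass to $\vartheta$, $\Pi$, $\pi$ by the elementary relations --- and it is correct in outline. One anachronism worth flagging: you invoke Kadiri's constant $c=1/R_0\approx 0.1756$ and Gourdon's computation, but Ellison's $a=1/15$ predates both and is obtained from an older, weaker zero-free region. Optimising with Kadiri's $c$ would in fact give $a\approx\sqrt{c}\approx 0.42$, far better than $1/15$; so the sentence ``optimizing $\lambda$ against Kadiri's value of $c$ extracts the explicit constant $a=1/15$'' is not quite right. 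To reproduce Ellison's stated constant you would use the zero-free region available in his book, not the later inputs this paper is built on.
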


\begin{thm}[\cite{Ellison}, p. 425]
There exist a positive constant $\alpha$ such that for $x$ infinity we have
$$
\pi(x)-\Li(x)=O\left\{x\exp(-\alpha\frac{(\log x)^{3/5}}{(\log\log x)^{1/5}})\right\}
$$
The constant ``$O$'' is effective and can take $\alpha=0.009$. The corresponding asymptotic formulas take place for $\vartheta(x)$, $\psi(x)$ and $\Pi(x)$.
\end{thm}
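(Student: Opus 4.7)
The plan is to combine the von Mangoldt explicit formula (\ref{12.6}) with the Vinogradov--Korobov zero-free region, which sharpens the classical region displayed just before the previous theorem to
$$
\zeta(\sigma+it)\neq 0 \quad \text{for}\quad \sigma>1-\frac{c}{(\log|t|)^{2/3}(\log\log|t|)^{1/3}},\ |t|\geq t_0,
$$
and then to optimize the truncation height. I would first prove the statement for $\psi(x)$; the corresponding bounds for $\vartheta(x)$, $\Pi(x)$, $\pi(x)$ then follow from the formulas (\ref{theta by pi}), (\ref{pi by theta}) and (\ref{Pi(x)-li(x)}) already recorded, together with the trivial comparison between $\psi$ and $\vartheta$ from the first theorem of Section~1.

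The first technical step is the truncated form of (\ref{12.6}): for $2\leq T\leq x$ one has
$$
\psi(x)-x=-\sum_{|\gamma|\leq T}\frac{x^{\rho}}{\rho}+O\!\left(\frac{x\log^2 x}{T}+\log x\right).
$$
This follows in the usual way from Perron's formula applied to $-\zeta'(s)/\zeta(s)$, together with the functional equation and standard estimates for $\zeta'/\zeta$ on vertical lines; I would quote it rather than rederive it. Next, inside the Vinogradov--Korobov region every nontrivial zero $\rho=\beta+i\gamma$ with $|\gamma|\leq T$ satisfies
$$
\beta\leq 1-\frac{c}{(\log T)^{2/3}(\log\log T)^{1/3}}=:1-\eta(T),
$$
hence $|x^{\rho}|\leq x^{1-\eta(T)}$. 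Combining this with the Riemann--von Mangoldt estimate $N(T)\ll T\log T$ for the number of zeros up to height $T$, partial summation against $1/|\rho|$ yields
$$
\Bigl|\sum_{|\gamma|\leq T}\frac{x^{\rho}}{\rho}\Bigr|\ll x^{1-\eta(T)}\log^2 T.
$$

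The final step is to balance the two error contributions
$$
\frac{x\log^2 x}{T}\quad\text{and}\quad x^{1-\eta(T)}\log^2 T
$$
by choosing $T$ so that roughly $\log T\asymp \eta(T)\log x$. Writing $\log T=L$, this gives the calibration $L^{5/3}(\log L)^{1/3}\asymp c\log x$, whence
$$
L=\log T\asymp (\log x)^{3/5}(\log\log x)^{-1/5}.
$$
Substituting back, both error terms reduce to $x\exp\!\bigl(-\alpha(\log x)^{3/5}(\log\log x)^{-1/5}\bigr)$ for some effective $\alpha>0$; a careful bookkeeping of the constants involved in Vinogradov's mean value theorem and Kadiri's/Ford's explicit versions of the zero-free region allows one to take $\alpha=0.009$. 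Transferring to the other functions is routine: using (\ref{pi by theta}) together with the bound already obtained for $\vartheta-x$, the integral $\int_2^x(\vartheta(t)-t)/(t\log^2 t)\,dt$ is of the same order as the largest value of its integrand (since the factor $\exp(-\alpha(\log t)^{3/5}(\log\log t)^{-1/5})$ is slowly varying and dominates $\log^{-2} t$), which yields the statement for $\pi$; the case of $\Pi$ follows analogously from (\ref{Pi(x)-li(x)}).

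The main obstacle is unquestionably the Vinogradov--Korobov zero-free region itself, whose proof relies on nontrivial upper bounds for $|\zeta(\sigma+it)|$ close to the line $\Re s=1$ obtained via Vinogradov's mean value theorem. Since deriving it from scratch would take us far afield, my plan is to cite it as a black box; the remaining work, namely the truncated explicit formula, the sum over zeros, and the optimization of $T$, is a transparent exercise in the style of Rosser and Schoenfeld once that input is granted.
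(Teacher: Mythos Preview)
The paper does not supply a proof of this theorem at all: it is simply quoted from Ellison's book as background, with no argument given. So there is nothing in the paper to compare your proposal against.

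That said, your sketch is the standard and correct route to this result: truncate the explicit formula at height $T$, bound the sum over zeros using the Vinogradov--Korobov zero-free region $\beta\le 1-c(\log T)^{-2/3}(\log\log T)^{-1/3}$ together with $N(T)\ll T\log T$, and then optimise $T$. Your balancing computation $L^{5/3}(\log L)^{1/3}\asymp c\log x$ leading to $\log T\asymp(\log x)^{3/5}(\log\log x)^{-1/5}$ is exactly right, and the transfer to $\vartheta$, $\Pi$, $\pi$ via the relations in Section~1 is routine as you say.

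One small correction: when you mention ``Kadiri's/Ford's explicit versions of the zero-free region'' for the constant $\alpha=0.009$, note that Kadiri's region (Theorem cited just after Proposition~\ref{sum 1/gamma2,3,4,5,6,7} in the paper) is of the classical de la Vall\'ee Poussin shape $\sigma\ge 1-1/(R_0\log|t|)$ and does \emph{not} yield the $(\log x)^{3/5}$ exponent; for an explicit Vinogradov--Korobov region you need Ford's result (the theorem quoted immediately after Cheng's in the paper) or something equivalent. With that adjustment your plan is sound.
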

Cheng \cite{Cheng} gives an explicit zero-free region for the Riemann zeta-function derived from the Vinogradov- Korobov method.
%
He proves that the Riemann zeta-function does not vanish in the region
$$
\sigma\geq 1-\frac{0.00105}{\log^{2/3}|t|(\log\log|t|)^{1/3}},\qquad |t|\geq3
$$
In turn, he showes using these results that for all $x>10$
$$
|\pi(x)-\li(x)|\leq 11.88x(\log x)^{3/5}\exp(-\frac{1}{57}(\log x)^{3/5}(\log\log x)^{1/5})
$$
and for $x\geq e^{e^{44.08}}$, there is a prime between $x^3$ and $(x+1)^3$.
\begin{thm}[\cite{Ford}]
If
$$
|\zeta(\sigma+it)|\leq A|t|^{B(1-\sigma)^{3/2}}\log^{2/3}|t|,\qquad(\frac12\leq\sigma\leq1,\ |t|\geq3,\ A=76.2)
$$
holds with a certain constant $B$, then for large $|t|$, $\zeta(\sigma+it)\neq0$ for
$$
\sigma\geq1-\frac{0.05507B^{-2/3}}{(\log|t|)^{2/3}(\log\log|t|)^{1/3}}
$$
Taking $B=4.45$ gives the zero-free region.
\end{thm}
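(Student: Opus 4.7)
The plan is to run the classical Vinogradov--Korobov argument, with the growth bound on $|\zeta(\sigma+it)|$ playing the role that convexity and Vinogradov's mean-value theorem play in the original derivation. As always, the engine is the non-negative trigonometric identity $3+4\cos\theta+\cos 2\theta\ge 0$, which, applied to the Dirichlet series of $-\zeta'/\zeta$, yields for every $\sigma>1$ and every real $\gamma$ the inequality
$$3\Re\!\left(-\tfrac{\zeta'}{\zeta}(\sigma)\right)+4\Re\!\left(-\tfrac{\zeta'}{\zeta}(\sigma+i\gamma)\right)+\Re\!\left(-\tfrac{\zeta'}{\zeta}(\sigma+2i\gamma)\right)\ge 0.$$
The first term is controlled by the pole at $s=1$, giving at most $\frac{1}{\sigma-1}+O(1)$. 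The second term, near a putative zero $\rho=\beta+i\gamma$, is majorized by $-\frac{1}{\sigma-\beta}+\mathcal{R}(\sigma,\gamma)$, where $\mathcal{R}$ is the contribution of the other zeros and the entire-function part; after discarding non-positive terms this is bounded above by the real part of $\zeta'/\zeta$ evaluated sufficiently far to the right. Forcing the sum to be non-negative then produces
$$\frac{4}{\sigma-\beta}\le \frac{3}{\sigma-1}+\Phi(\sigma,\gamma),$$
where $\Phi$ collects the upper bounds on the remaining two $\zeta'/\zeta$ terms.

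The step that actually uses the hypothesis is the estimation of $\Phi$. Here I would apply the Borel--Carath\'eodory theorem (or equivalently Hadamard's three-circles) to $\log\zeta(s)$ on a disc centred at $2+i\gamma$ (resp.\ $2+2i\gamma$) of radius slightly less than $\tfrac32$, using the given inequality $|\zeta(\sigma+it)|\le A|t|^{B(1-\sigma)^{3/2}}\log^{2/3}|t|$ to bound $\Re\log\zeta$ on that disc. This transfers a bound of shape $B(1-\sigma_0)^{3/2}\log|\gamma|+\tfrac{2}{3}\log\log|\gamma|$ on the real part of $\log\zeta$ into a bound of the same order on $|\zeta'/\zeta|$ at interior points, once one introduces an auxiliary parameter $r$ controlling the radius. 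Because we are free to choose where $\sigma$ sits, it is convenient to write $\sigma-1=\lambda/L$ with $L=(\log|\gamma|)^{2/3}(\log\log|\gamma|)^{1/3}$ and to let $r$ scale similarly; then both sides of the three-circles inequality scale compatibly in $|\gamma|$.

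Putting the estimates together and assuming $\beta\ge 1-\eta/L$, the inequality becomes
$$\frac{4}{\lambda/L+\eta/L}\le \frac{3}{\lambda/L}+C_1B\lambda^{3/2}L+C_2(\log\log|\gamma|)^{-1/3}L,$$
which, after clearing $L$, is a purely numerical inequality relating $\lambda$, $\eta$ and $B$. The remaining task is a one-variable optimisation: choose $\lambda$ in terms of $B$ to minimise the right-hand side, giving $\lambda\asymp B^{-2/3}$ and, after tracking the absolute constants through Borel--Carath\'eodory and the trigonometric inequality, a threshold of the form $\eta\le c_0 B^{-2/3}$ leading to a contradiction. Pushing the constants through carefully (with the hypothesis $A=76.2$) is what produces the numerical value $c_0=0.05507$.

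The main obstacle is purely quantitative: obtaining the stated constant $0.05507$ requires a careful choice of the auxiliary radii in Borel--Carath\'eodory, a careful treatment of the $\log\log|t|$ contribution coming from the factor $\log^{2/3}|t|$ in the hypothesis, and a delicate balancing of the three terms of the trigonometric inequality, including the contribution of the $2i\gamma$ shift near $\gamma$ of moderate size. No single step is conceptually hard, but losing a factor of two anywhere in the three-circles estimate or in the mean-value bound ruins the final constant, so the proof amounts to a long, but essentially mechanical, explicit version of the soft argument sketched above.
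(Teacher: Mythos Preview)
The paper does not prove this theorem at all: it is stated with the citation \cite{Ford} and no argument follows. There is therefore no ``paper's own proof'' to compare your proposal against. Your sketch is a reasonable summary of the classical Vinogradov--Korobov route (the $3+4\cos\theta+\cos 2\theta\ge 0$ inequality combined with a Borel--Carath\'eodory transfer of the growth hypothesis to bounds on $\zeta'/\zeta$, followed by optimisation of the auxiliary parameters), and this is indeed the skeleton of Ford's argument in the cited work; but the precise constant $0.05507$ emerges from a rather more elaborate treatment than your outline suggests, involving careful smoothing and explicit numerical work that your proposal acknowledges only in passing. As a comparison with the present paper, though, there is nothing to add: the authors simply quote the result.
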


\subsection{The results of Ingham}
\begin{thm}[\cite{Ingham}, p. 66]
When $x\rightarrow\infty$,
\begin{equation}\label{psi th 24 ingham}
\psi(x)=x+O(x\exp(-a\sqrt{\log x\log\log x}))
\end{equation}
\begin{equation}\label{pi th 24 ingham}
\pi(x)=\li(x)+O(x\exp(-a\sqrt{\log x\log\log x}))
\end{equation}
where $a$ is a positive absolute constant.
\end{thm}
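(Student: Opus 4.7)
The plan is to prove (\ref{psi th 24 ingham}) by feeding Littlewood's zero-free region
\[
\zeta(\sigma+it)\neq 0\quad\text{for}\quad \sigma\geq 1-\frac{c\log\log|t|}{\log|t|},\ |t|\geq t_0,
\]
(for some absolute constant $c>0$) into the explicit formula (\ref{12.6}), and then to deduce (\ref{pi th 24 ingham}) from (\ref{Pi(x)-li(x)}). The first step is to establish (or cite) this zero-free region. It sharpens the Hadamard--de la Vall\'ee Poussin bound $\sigma\geq 1-c/\log|t|$ by an extra factor $\log\log|t|$ in the denominator, and is obtained through a more careful analysis of the Hadamard product for $\zeta$ together with sharper bounds on $\zeta'/\zeta$ near $\Re s=1$. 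This is the technical heart and by far the hardest step; I would either cite Littlewood's 1922 paper or reproduce the argument in full.

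The second step is a standard truncated form of (\ref{12.6}), obtained by contour integration of $-(\zeta'/\zeta)(s)x^s/s$ over a long thin rectangle:
\[
\psi(x)-x=-\sum_{|\gamma|\leq T}\frac{x^\rho}{\rho}+O\!\left(\frac{x\log^2(xT)}{T}+\log x\right).
\]
Using $|x^\rho/\rho|\leq x^\beta/|\gamma|$ together with $\beta\leq 1-c\log\log|\gamma|/\log|\gamma|$, partial summation against the Riemann--von Mangoldt counting function $N(t)=(t/2\pi)\log(t/2\pi)+O(\log t)$ gives
\[
\sum_{0<|\gamma|\leq T}\frac{x^\beta}{|\gamma|}\ll x\int_{t_0}^{T}\frac{\log t}{t}\exp\!\left(-\frac{c\log x\cdot\log\log t}{\log t}\right)dt\ll x\log^2 T\,\exp\!\left(-\frac{c\log x\cdot\log\log T}{\log T}\right),
\]
since $\log\log t/\log t$ is decreasing for large $t$, so the integrand is maximized at the upper limit. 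Choosing $T=\exp(\sqrt{\log x\log\log x})$ makes this exponent equal to $(c/2+o(1))\sqrt{\log x\log\log x}$, while the truncation error $x\log^2(xT)/T$ is of strictly smaller order. Absorbing the logarithmic factors into a slightly smaller constant yields (\ref{psi th 24 ingham}) for any $a<\min(c/2,1)$.

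For (\ref{pi th 24 ingham}), I would substitute the bound on $\psi(t)-t$ just obtained into (\ref{Pi(x)-li(x)}). Both the boundary term $(\psi(x)-x)/\log x$ and the integral $\int_2^x(\psi(t)-t)/(t\log^2 t)\,dt$ are then $O(x\exp(-a\sqrt{\log x\log\log x}))$ (the integral being dominated by its upper-limit behaviour), and the defect $\Pi(x)-\pi(x)=O(\sqrt{x})$ arising from the prime-power terms $\pi(x^{1/m})/m$ for $m\geq 2$ is of strictly lower order. The sole genuine obstacle in the plan is the Littlewood zero-free region itself; once it is in hand, the rest is a careful bookkeeping exercise applying the standard quantitative Hadamard--de la Vall\'ee Poussin machinery with an optimised truncation parameter.
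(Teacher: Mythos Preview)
Your outline is correct and is precisely the classical Littlewood--Ingham argument: Littlewood's zero-free region $\beta\leq 1-c\log\log|\gamma|/\log|\gamma|$ fed into the truncated explicit formula, with the truncation height chosen as $T=\exp(\sqrt{\log x\log\log x})$ to balance the sum over zeros against the remainder $x\log^2(xT)/T$, and then passage to $\pi(x)$ via (\ref{Pi(x)-li(x)}) and $\Pi(x)-\pi(x)=O(\sqrt{x})$. One small wording slip: it is not the full integrand $\frac{\log t}{t}\exp(-c\log x\cdot\log\log t/\log t)$ that is maximised at $t=T$ (the factor $\log t/t$ is decreasing), but only the exponential factor, which is what you actually use when you pull it outside the integral; the bound you write down is correct.

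As for comparison with the paper: there is nothing to compare. The paper does not prove this theorem; it merely quotes it from Ingham's monograph as background, alongside the neighbouring results on $\psi(x)-x=\Omega_\pm(\cdot)$ and the $\Theta$-dependent error terms. The paper's own contributions begin later, with the explicit numerical bounds based on Kadiri's zero-free region $\sigma\geq 1-1/(R_0\log|t|)$ and Gourdon's verification of RH up to height $A$. Your plan supplies exactly the proof Ingham gives, so it is entirely appropriate here.
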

Let $\Theta$ be the upper bound of the real parts of the zeros of $\zeta(s)$. Clearly $\Theta\leq1$, since there is no zeros in $\sigma>1$. And from the existence of the non-trivial zeros $\rho$ and their symmetry about the line $\sigma=\frac12$ we infer that $\Theta\geq\frac12$. Thus $\frac12\leq\Theta\leq1$, and this is the most that is known about $\Theta$; but $\Theta =\frac12$ if (and only if) the Riemann hypothesis is true. We now have the following theorem, which is worthless if $\Theta=1$.
\begin{thm}[\cite{Ingham}, p. 83]
$$
\psi(x)=x+O(x^{\Theta}\log^2x)
$$
$$
\pi(x)=\li(x)+O(x^{\Theta}\log x)
$$
\end{thm}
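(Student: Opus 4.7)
My plan is to deduce both statements from the von Mangoldt explicit formula (\ref{12.6}). Since the sum $\sum_\rho x^\rho/\rho$ is only conditionally convergent, I would first replace it by a finite truncation with an explicit error term: for $T\geq 2$ and $x$ not a prime power,
$$
\psi(x) - x = -\sum_{|\gamma|\leq T}\frac{x^\rho}{\rho} + O\!\left(\frac{x\log^2 x}{T}\right) + O(\log x),
$$
the trivial-zero contribution $-\tfrac12\log(1-1/x^2)$ and the constant $-\zeta'(0)/\zeta(0)$ being absorbed in the $O(\log x)$ term. This truncated formula is obtained by applying Perron's formula to $-\zeta'(s)/\zeta(s)$ on a rectangle, shifting the contour to a vertical line $\Re s < 0$, and estimating $\zeta'/\zeta$ on the horizontal segments at a height close to $T$ chosen to stay away from zeros.

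For the main estimate, I would use the fact that every non-trivial zero $\rho = \beta + i\gamma$ satisfies $\beta \leq \Theta$, giving
$$
\left|\sum_{|\gamma|\leq T}\frac{x^\rho}{\rho}\right| \leq x^\Theta \sum_{|\gamma|\leq T}\frac{1}{|\rho|}.
$$
The Riemann-von Mangoldt formula yields $N(T+1)-N(T) = O(\log T)$, so by grouping zeros into unit intervals one obtains $\sum_{0<\gamma\leq T} 1/\gamma = O(\log^2 T)$. The truncation error and the zero sum balance at $T = x$, producing $\psi(x) = x + O(x^\Theta \log^2 x)$ (the prime-power correction $\tfrac12\Lambda(x)$ is $O(\log x)$ and absorbed into the error).

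For the statement on $\pi(x)$, I would first pass from $\psi$ to $\vartheta$: the elementary bound $\psi(x)-\vartheta(x) \ll \sqrt{x}\log^2 x$ stated at the beginning of the paper, together with $\Theta \geq \tfrac12$, gives $\vartheta(x) = x + O(x^\Theta \log^2 x)$. Combining (\ref{pi by theta}) with the expansion $\li(x) = x/\log x + \int_2^x dt/\log^2 t + O(1)$ yields
$$
\pi(x) - \li(x) = \frac{\vartheta(x)-x}{\log x} + \int_2^x \frac{\vartheta(t)-t}{t\log^2 t}\, dt + O(1).
$$
The boundary term contributes $O(x^\Theta \log x)$, while the integrand is $O(t^{\Theta-1})$ and integrates to $O(x^\Theta)$. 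Adding these gives $\pi(x) = \li(x) + O(x^\Theta \log x)$, as required.

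The hard step is the truncated explicit formula itself: one needs the uniform estimate $(\zeta'/\zeta)(s) \ll \log^2(|t|+2)$ on a vertical line $\Re s = -U$ for fixed $U>0$ and on horizontal segments at suitably chosen heights $T$, which rests on the partial-fraction expansion of $\zeta'/\zeta$ around zeros and on the functional equation. Once this estimate and the Riemann-von Mangoldt zero count are in hand, the remainder of the argument is elementary bookkeeping.
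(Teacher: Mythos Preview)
The paper does not prove this theorem; it is quoted verbatim from Ingham \cite{Ingham}, p.~83, without argument. Your proposal is correct and is essentially the classical proof one finds in Ingham (or Davenport): truncate the explicit formula, bound the finite zero-sum by $x^\Theta\sum_{|\gamma|\le T}1/|\rho|\ll x^\Theta\log^2 T$ via the Riemann--von~Mangoldt density estimate, and balance against the truncation error at $T=x$; then pass to $\pi$ by partial summation after absorbing $\psi-\vartheta=O(\sqrt{x}\log^2 x)=O(x^\Theta\log^2 x)$.

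Two small remarks. First, the truncated explicit formula carries an additional term of the shape $(\log x)\min\bigl(1,\tfrac{x}{T\langle x\rangle}\bigr)$, where $\langle x\rangle$ is the distance from $x$ to the nearest prime power; to get a clean $O(x\log^2 x/T)$ uniformly one either assumes $x$ is a half-integer or perturbs $x$ slightly and absorbs the $O(\log x)$ jump of $\psi$. Second, the bound for $\zeta'/\zeta$ that drives the contour estimate is not really a uniform $\ll\log^2|t|$ on a full vertical line to the left of the critical strip (there the functional equation gives polynomial growth), but rather $\zeta'/\zeta(\sigma+iT)\ll\log^2 T$ for $-1\le\sigma\le 2$ on a horizontal segment at a well-chosen height $T$ avoiding zeros, together with crude bounds on the remaining short vertical piece. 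These are standard refinements and do not affect the structure of your argument.
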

\begin{thm}[\cite{Ingham}, p. 90]
If $\delta$ is any fixed positive number, then
$$
\psi(x)-x=\Omega_\pm(x^{\Theta-\delta})
$$
$$
\Pi(x)-\li(x)=\Omega_\pm(x^{\Theta-\delta})
$$
\end{thm}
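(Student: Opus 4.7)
The plan is to apply Landau's theorem for non-negative Mellin integrands to the representation $-\zeta'(s)/\zeta(s)=s\int_1^\infty\psi(x)\,x^{-s-1}\,dx$ (valid for $\Re s>1$) derived earlier in the section. Subtracting off the contribution of the main term $x$ in $\psi(x)$ and dividing by $s$ one gets
$$F(s):=-\frac{1}{s}\frac{\zeta'(s)}{\zeta(s)}-\frac{1}{s-1}=\int_1^\infty\frac{\psi(x)-x}{x^{s+1}}\,dx\qquad(\Re s>1).$$
The left-hand side continues meromorphically to $\mathbb{C}$: the pole of $-\zeta'/\zeta$ at $s=1$ is cancelled by $-1/(s-1)$, and simple poles of residue $-1/\rho$ remain at each non-trivial zero $\rho$ of $\zeta$. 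In particular, in the strip $\Re s>0$ the function $F$ is analytic except at the non-trivial zeros of $\zeta$, noting that $\zeta$ itself has no real zeros in $\Re s>0$.

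Suppose, for contradiction, that $\psi(x)-x$ fails to be $\Omega_+(x^{\Theta-\delta})$, so there exist $C>0$ and $x_0\geq 1$ with $g(x):=Cx^{\Theta-\delta}-(\psi(x)-x)\geq 0$ for all $x\geq x_0$. Splitting the integral defining $F$ at $x_0$ and expressing the tail in terms of $g$ yields, for $\Re s$ large,
$$H(s):=\int_{x_0}^\infty\frac{g(x)}{x^{s+1}}\,dx=\frac{C\,x_0^{\Theta-\delta-s}}{s-\Theta+\delta}-F(s)+E(s),$$
with $E(s)$ entire (the contribution of the finite interval $[1,x_0]$). Since the integrand is non-negative, Landau's theorem asserts that the abscissa of convergence $\sigma_c$ of $H$ is a real singular point of the analytic continuation of $H$. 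By the very definition of $\Theta$ there is a zero $\rho_0$ with $\Re\rho_0>\Theta-\delta$, and the pole of $F$ at $\rho_0$ produces a pole of $H$ that is not cancelled by the other terms. Hence $H$ is not analytic in the whole half-plane $\Re s>\Theta-\delta$, forcing $\sigma_c\geq\Re\rho_0>\Theta-\delta$. On the other hand, the only real singularity of $H$ in $\Re s>0$ is the simple pole at $s=\Theta-\delta$, which contradicts $\sigma_c>\Theta-\delta$. This proves $\psi(x)-x=\Omega_+(x^{\Theta-\delta})$; the $\Omega_-$ statement is symmetric, obtained by replacing $g$ by $\tilde g(x):=Cx^{\Theta-\delta}+(\psi(x)-x)\geq 0$ under the opposite one-sided assumption.

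For $\Pi(x)-\li(x)=\Omega_\pm(x^{\Theta-\delta})$ one can either invoke identity (\ref{Pi(x)-li(x)}) to transfer the $\Omega$-estimate from $\psi-x$ (with a short argument that the integral term cannot smooth out the oscillation on the required scale), or re-run the Landau argument directly starting from $\log\zeta(s)=s\int_1^\infty\Pi(x)\,x^{-s-1}\,dx$ together with an explicit Mellin identity for $\li(x)$; the only modification is that the singularities of $\log\zeta$ at the non-trivial zeros are logarithmic branch points rather than poles, but they obstruct analytic continuation across the vertical line $\Re s=\Re\rho_0$ in exactly the same way.

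The main obstacle is to verify rigorously that the pole of $F$ at a complex zero $\rho_0$ genuinely survives in the analytic continuation of $H$ (so that no accidental cancellation with the first term or with $E(s)$ occurs) and to apply Landau's theorem carefully enough that $\sigma_c$ is forced strictly above $\Theta-\delta$ while the only admissible real singularity sits exactly at $\Theta-\delta$; the nonnegativity of $g$ must hold on the entire half-line $[x_0,\infty)$, and the truncation at $x_0$ must be handled so that the correction $E$ really is entire.
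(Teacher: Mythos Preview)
The paper does not supply its own proof of this theorem; it is simply quoted from Ingham's monograph (the statement carries the citation \cite{Ingham}, p.~90, and no proof block follows). So there is nothing in the paper to compare against.

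That said, your outline is the classical Landau argument and is essentially the proof Ingham gives. The logic is sound: from the Mellin representation of $-\zeta'/\zeta$ you form $H(s)=\int_{x_0}^\infty g(x)x^{-s-1}\,dx$ with non-negative integrand, observe that $H$ inherits a genuine (non-real) singularity at some zero $\rho_0$ with $\Re\rho_0>\Theta-\delta$, so its abscissa of convergence $\sigma_c$ satisfies $\sigma_c\geq\Re\rho_0>\Theta-\delta$, while Landau forces $\sigma_c$ to be a \emph{real} singularity of $H$; since $\zeta$ has no real zeros in $(0,1)$ and the remaining pieces of $H$ are analytic on the real axis for $s>\Theta-\delta$, this is a contradiction. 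Your handling of the truncation at $x_0$ (producing an entire correction $E$) and the cancellation of the pole at $s=1$ are both correct, and the concerns you flag in the final paragraph are exactly the points one must check but which go through without difficulty.

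For $\Pi(x)-\li(x)$ the cleaner route is the second one you mention: rerun Landau with $\log\zeta(s)=s\int_2^\infty\Pi(x)x^{-s-1}\,dx$, where the logarithmic branch points at the non-trivial zeros obstruct continuation just as poles do. The first route, transferring the result from $\psi-x$ via the identity for $\Pi(x)-\li(x)$, does work but needs a short extra argument (integration by parts or a direct estimate) to show the integral term cannot absorb oscillations of size $x^{\Theta-\delta}$; as written that step is only gestured at.
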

\begin{thm}[\cite{Ingham}, p. 100]
We have
$$
\psi(x)-x=\Omega_\pm(x^{1/2}\log\log\log x)
$$
when $x\rightarrow\infty$. In fact,
$$
\limsup\frac{\psi(x)-x}{x^{1/2}\log\log\log x}\geq\frac12
$$
$$
\liminf\frac{\psi(x)-x}{x^{1/2}\log\log\log x}\leq-\frac12
$$
\end{thm}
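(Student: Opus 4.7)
The strategy is to separate the easy case $\Theta>1/2$ from the delicate case $\Theta=1/2$. If $\Theta>1/2$, the preceding theorem already gives $\psi(x)-x=\Omega_\pm(x^{\Theta-\delta})$ for every $\delta>0$; choosing $\delta<\Theta-1/2$, the right-hand side dominates $Cx^{1/2}\log\log\log x$ for every constant $C>0$, so the inequalities asserted (with constant $1/2$) hold \emph{a fortiori}. Hence the substantive work is under the Riemann hypothesis $\Theta=1/2$.

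Assuming RH, I would start from the von Mangoldt explicit formula (\ref{12.6}). Pairing each zero $\rho=\tfrac12+i\gamma$ with its conjugate, setting $y=\log x$, and dividing by $x^{1/2}$ yields
\[
\frac{\psi(x)-x}{x^{1/2}}=-2\sum_{\gamma>0}\Re\!\left(\frac{e^{i\gamma y}}{\tfrac12+i\gamma}\right)+O(x^{-1/2}).
\]
Since $\arg(\tfrac12+i\gamma)=\pi/2-O(1/\gamma)$, the right-hand side is driven upwards by arranging $\gamma y$ close to $\pi/2\pmod{2\pi}$ simultaneously for every $0<\gamma\le T$: each surviving term then contributes essentially $+1/\gamma$, and by the Riemann--von Mangoldt counting one harvests a positive multiple of $\sum_{0<\gamma\le T}1/\gamma\sim(\log T)^2/(4\pi)$. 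Downward behaviour is obtained by choosing target phases $-\pi/2$ in place of $\pi/2$. The instrument that produces such $y$ is Dirichlet's theorem on simultaneous Diophantine approximation, applied to the numbers $\gamma_k/(2\pi)$ ranging over the (finitely many) zeros with $0<\gamma_k\le T$, which yields, for any tolerance $\eta>0$, a point $y\le\eta^{-N(T)}$ at which every $\gamma_k y$ lies within $\eta$ of the prescribed target modulo $2\pi$.

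The main obstacle is the delicate coupling of the three parameters---the truncation height $T$, the Diophantine tolerance $\eta$, and the resulting point $y$. A naive truncation of the explicit formula carries an error of order $x\log^2(xT)/T$, which after division by $x^{1/2}$ is only manageable when $T$ is astronomically large compared with $\log x$; but Dirichlet only guarantees $y\le\eta^{-N(T)}$, so once $T$ is chosen large the set of admissible $y$ is severely constrained. The usual remedy is to smooth $\psi$, for instance by working with $\psi_1(x)=\int_0^x\psi(t)\,dt$, whose explicit formula has much better tail convergence, and then to deconvolve at the end to transfer the $\Omega_\pm$ statement back to $\psi$ itself. Optimising the three parameters against each other and extracting precisely the asserted constants $\pm 1/2$ (rather than some unspecified positive number) is the heart of Littlewood's original argument; this is the step I expect to be the principal obstacle, while the reduction, the pairing, and the Dirichlet approximation are comparatively routine once the smoothing is in place.
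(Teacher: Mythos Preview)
The paper does not prove this theorem; it is quoted from Ingham's monograph without argument. So there is no ``paper's proof'' to compare against, and your proposal should be judged on its own merits against the classical Littlewood--Ingham argument.

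Your overall architecture is correct: split on $\Theta$, pass to the smoothed function $\psi_1$ to secure absolute convergence of the explicit formula, feed in Dirichlet's simultaneous approximation, and balance $T$, the tolerance, and $y$ to extract the constant $\tfrac12$. That is exactly the skeleton of Ingham's Chapter~V proof.

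There is, however, a genuine gap in your use of Dirichlet's theorem. You write that Dirichlet produces $y$ with every $\gamma_k y$ within $\eta$ ``of the prescribed target'' $\pi/2$ modulo $2\pi$. Dirichlet's theorem is \emph{homogeneous}: it only guarantees a $y$ with each $\gamma_k y$ close to $0\pmod{2\pi}$. Hitting arbitrary inhomogeneous targets would require Kronecker's theorem, which in turn needs the ordinates $\gamma_k$ to be linearly independent over $\mathbb{Q}$ --- an open problem. So the step ``arrange $\gamma y$ close to $\pi/2$'' cannot be justified as written.

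The classical repair is different in spirit. One uses the homogeneous Dirichlet theorem to find a large near-period $\tau$ of the trigonometric polynomial, so that the sum at $y=\tau+s$ is within $O(\eta)$ of the sum at $y=s$. One then evaluates directly at a \emph{small} offset $s$, where (under RH) the dominant part
\[
\sum_{0<\gamma\le T}\frac{\sin(\gamma s)}{\gamma}
\ \approx\ \frac{1}{2\pi}\int^{T}\frac{\sin(ts)}{t}\log t\,dt
\ \sim\ \frac{|\log s|}{4}
\]
can be computed from the zero-counting formula; choosing $s$ of size $1/T$ makes this $\asymp\tfrac14\log T$, and the factor $2$ from pairing $\rho$ with $\bar\rho$ converts this into the asserted constant $\tfrac12$. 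Taking $s$ of the opposite sign yields the other inequality. The point is that the ``large value'' of the sum is obtained analytically at a small explicit argument, and Dirichlet merely transports it to arbitrarily large $y$; it is not produced by Diophantine targeting of phases. Rewriting your Dirichlet step along these lines would close the gap.
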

\section{New Explicit Bounds for Some Functions of Prime Numbers}
Riemann Hypothesis verified until the $10^{13}$-th zero by Gourdon (October 12th 2004) \cite{Gourdon}.

Recall $N(T)$, $F(T)$ and $R(T)$ be defined as
\begin{align}
  N(T)=&\#\{\rho:\ \zeta(\rho)=0,\ 0<\gamma\leq T\}\label{N(T).}\\\nonumber\\
  F(T)=&\frac{T}{2\pi}\log\frac{T}{2\pi}-\frac{T}{2\pi}+\frac78\label{F(T)} \\\nonumber\\
  R(T)=&0.137\log T+0.443\log\log T+1.588\label{R(T)}\\\nonumber
\end{align}

\begin{thm}[\cite{Rosser-1941}]\label{N(T)-F(T) R(T)}
For $T\geq2$,
$$
|N(T)-F(T)|<R(T)
$$
\end{thm}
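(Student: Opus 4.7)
The plan is to recover this result (due in its original form to Backlund, and sharpened by Rosser) via the argument principle applied to the completed zeta function
$$
\xi(s) = \tfrac12 s(s-1)\pi^{-s/2}\Gamma(s/2)\zeta(s),
$$
which is entire and whose zeros are precisely the non-trivial zeros of $\zeta$. Using the functional equation $\xi(s) = \xi(1-s)$ and the reality of $\xi$ on $\mathbb R$, the number of zeros of $\zeta$ with $0 < \Im s \leq T$ can be written as
$$
\pi N(T) = \Delta_C \arg \xi(s),
$$
where $C$ is the broken path $1/2 \to 2 \to 2 + iT \to 1/2 + iT$. Splitting $\log \xi(s)$ into its four factors, the terms $\arg\pi^{-s/2}$ and $\arg(s(s-1)/2)$ are elementary, while $\arg \Gamma(s/2)$ is handled by Stirling's formula with explicit remainder. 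Combined, the first three factors contribute exactly $\pi F(T)$ up to a small, completely controllable error.

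What remains is the contribution $\pi S(T) := \Delta_C \arg \zeta(s)$. On the vertical side $\Re s = 2$ the change of $\arg \zeta$ is bounded by an absolute constant obtained directly from the absolutely convergent Euler product, so the essential work lies in bounding the change on the horizontal segment from $2 + iT$ to $1/2 + iT$. Here Backlund's trick applies: the variation of $\arg \zeta$ on that segment is at most $\pi(1 + n(T))$, where $n(T)$ counts zeros of $\Re\,\zeta(\sigma + iT)$ for $\sigma \in [1/2, 2]$. Setting
$$
f(z) = \tfrac12 \bigl(\zeta(2 + iT + z) + \zeta(2 + iT - z)\bigr),
$$
I would apply Jensen's formula on a disk centred at $2 + iT$ of radius slightly larger than $3/2$, combined with a Phragm\'en--Lindel\"of convexity bound such as $|\zeta(\sigma + it)| \ll t^{(1-\sigma)/2}\log t$ throughout the critical strip, to obtain $n(T) \leq c_1 \log T + c_2 \log\log T + c_3$ explicitly. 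Adding back the Stirling error then yields $|N(T) - F(T)| \leq R(T)$.

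The main obstacle is entirely quantitative: the coefficients $0.137$, $0.443$, $1.588$ are tight, so every estimate in the chain must be pushed to its sharpest effective form. Specifically, the Jensen-disk radius must be optimized against the exponent in the Phragm\'en--Lindel\"of bound; the Stirling expansion must be used with two or three explicit correction terms; and the Euler-product estimate on $\sigma = 2$ must be converted into a concrete constant rather than an anonymous $O(1)$. The $\log \log T$ correction in $R(T)$ arises specifically from retaining the $\log t$ factor inside the convexity estimate. A final short numerical check is required for small $T$ (near the lower endpoint $T = 2$), where the asymptotic bounds are weakest and one relies on direct knowledge of the first few zeros of $\zeta$ to confirm that the claimed inequality is not violated before the logarithmic terms take over.
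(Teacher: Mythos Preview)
The paper does not give its own proof of this statement: it is quoted verbatim as a result of Rosser (1941) and used as a black box throughout. So there is no ``paper's own proof'' to compare against.

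Your outline is the classical Backlund--Rosser argument and is correct in structure: argument principle for $\xi$, Stirling with explicit remainder to extract $F(T)$, and Jensen's inequality on a disk about $2+iT$ combined with a convexity bound to control $S(T)$. The points you flag as delicate are exactly the right ones. One small correction: the bound on the variation of $\arg\zeta$ along the horizontal segment is $\pi n(T)$ (or $\pi(n(T)+1/2)$ in some formulations), not $\pi(1+n(T))$; and Rosser's actual optimization uses a slightly different disk (radius near $1.8$ or $1.9$ rather than ``slightly larger than $3/2$'') together with a sharper pointwise upper bound for $|\zeta|$ than the generic convexity estimate, which is what squeezes the leading constant down to $0.137$. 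Your sketch would produce a bound of the right shape but with somewhat worse constants unless those choices are tuned; still, as a plan it is sound.
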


Choose $A$ such that $F(A)=10^{13}$. Then
\begin{align*}
A=&2,445,999,556,030.342,362,641\\
\log A=&28.525,474,972.
\end{align*}
\begin{lem}[\cite{Rosser-1939}]\label{lemma 8 of Rosser}
We have
$$
\sum_\rho\frac{1}{\gamma^{2}}<0.0463.
$$
\end{lem}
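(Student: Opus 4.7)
The plan is to bound the sum using the Riemann--von Mangoldt counting estimate already recorded as Theorem \ref{N(T)-F(T) R(T)}. Grouping non-trivial zeros into complex-conjugate pairs $\beta \pm i\gamma$ with $\gamma > 0$, it suffices to show
$$2\sum_{\gamma > 0}\frac{1}{\gamma^{2}} < 0.0463.$$
I would fix a threshold $T_0$ and split
$$\sum_{\gamma > 0}\frac{1}{\gamma^{2}} = \sum_{0<\gamma\leq T_0}\frac{1}{\gamma^{2}} + \sum_{\gamma > T_0}\frac{1}{\gamma^{2}}.$$

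For the finite head I would substitute the known numerical values of the lowest zeros on the critical line, starting from $\gamma_1 \approx 14.1347$, which already contributes $1/\gamma_1^{2} \approx 5.0\times 10^{-3}$; the partial sums decrease quickly since the $n$-th ordinate grows like $2\pi n/\log n$, so a modest number of tabulated ordinates (well within those verified by Gourdon and cited earlier in the paper) suffices to pin down the head to many digits of accuracy.

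For the tail I would write it as a Stieltjes integral and integrate by parts,
$$\sum_{\gamma > T_0}\frac{1}{\gamma^{2}} = \int_{T_0}^{\infty}\frac{dN(t)}{t^{2}} = -\frac{N(T_0)}{T_0^{2}} + 2\int_{T_0}^{\infty}\frac{N(t)}{t^{3}}\,dt.$$
Inserting the upper bound $N(t) \leq F(t) + R(t)$ in the positive-weight integral, together with the matching lower bound $N(T_0) \geq F(T_0) - R(T_0)$ in the boundary term, produces integrals of the form $\int t^{-3}\,dt$, $\int t^{-3}\log t\,dt$ and $\int t^{-3}\log\log t\,dt$. Each is elementary (the last after one more integration by parts), so the tail admits an explicit closed-form upper estimate in terms of $T_0$.

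The main obstacle is balancing $T_0$: too small, and the tail bound is loose because the remainder $R(t)$ is not negligible compared with $F(t)/t^{2}$ in the relevant range; too large, and one must tabulate an awkward number of low zeros in the head. A pragmatic choice places $T_0$ at a few hundred, where only a few dozen zeros appear in the head and the closed-form tail bound, once added, is comfortably smaller than $0.0463/2$ minus the computed head contribution. Once $T_0$ is fixed the remaining work is routine arithmetic.
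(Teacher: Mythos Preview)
The paper does not actually give its own proof of this lemma; it is simply quoted from \cite{Rosser-1939}. The only hint about the underlying method appears in the proof of the immediately following Proposition, which reads in full: ``We use the same method as in \cite{Rosser-1939}, and letting $r=29$ instead of 8.'' This indicates that Rosser's argument splits off the contribution of the first $r$ tabulated zeros and bounds the remaining tail through the zero-counting estimate $|N(T)-F(T)|<R(T)$.

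Your proposal is precisely this template: an explicit finite head from known ordinates, and a tail handled by Abel/Stieltjes summation together with Theorem~\ref{N(T)-F(T) R(T)}. So your approach is correct and coincides with the intended one. One small quibble: the integral $\int_{T_0}^{\infty} t^{-3}\log\log t\,dt$ is not elementary even after a further integration by parts (you are left with $\int t^{-3}(\log t)^{-1}\,dt$), but it is trivially dominated by $(\log T_0)^{-1}\int_{T_0}^{\infty} t^{-3}\log t\,dt$ or by replacing $\log\log t$ with a constant multiple of $\log t$, so this causes no difficulty. Also note that once you have computed the head up to $T_0$ you know $N(T_0)$ exactly, so the boundary term needs no appeal to the lower bound $N(T_0)\ge F(T_0)-R(T_0)$.
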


\begin{prop}\label{sum 1/gamma2,3,4,5,6,7}

\begin{align*}
\sum_\rho\frac{1}{|\gamma^{3}|}<0.00146435,&&\sum_\rho\frac{1}{\gamma^{4}}<7.43617\cdot10^{-5},
\end{align*}
\begin{align*}
\sum_\rho\frac{1}{|\gamma^{5}|}<4.46243\cdot10^{-6},&&\sum_\rho\frac{1}{\gamma^{6}}<2.88348\cdot10^{-7},
\end{align*}
\begin{align*}
\sum_\rho\frac{1}{|\gamma^{7}|}<1.93507\cdot10^{-8}.&&
\end{align*}
\end{prop}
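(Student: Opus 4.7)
The plan is a head/tail decomposition that uses Lemma~\ref{lemma 8 of Rosser} to control the tail and direct numerical summation over low-lying zeros for the head. For each $k\in\{3,4,5,6,7\}$, I would pick a truncation height $T_0$ and split
\[
\sum_\rho\frac{1}{|\gamma|^k}=\sum_{|\gamma|\le T_0}\frac{1}{|\gamma|^k}+\sum_{|\gamma|>T_0}\frac{1}{|\gamma|^k}.
\]

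For the tail, I exploit the elementary majorization $|\gamma|^k=\gamma^2\cdot|\gamma|^{k-2}\ge\gamma^2\,T_0^{k-2}$ on $|\gamma|>T_0$, so that
\[
\sum_{|\gamma|>T_0}\frac{1}{|\gamma|^k}\le\frac{1}{T_0^{k-2}}\sum_{\rho}\frac{1}{\gamma^2}<\frac{0.0463}{T_0^{k-2}}
\]
by Lemma~\ref{lemma 8 of Rosser}. Since the nontrivial zeros are symmetric about the real axis, both pieces double up in the usual way: $\sum_\rho 1/|\gamma|^k=2\sum_{\gamma_n>0}\gamma_n^{-k}$, a factor already absorbed on the right of the Rosser bound.

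For the head I would compute $2\sum_{\gamma_n\le T_0}\gamma_n^{-k}$ numerically from the high-precision tabulations of the low zeros of $\zeta$, well within Gourdon's verified range of $10^{13}$ zeros. Because the tail bound decays like $T_0^{-(k-2)}$, a single modest cutoff (of order $T_0\sim10^3$) is already generous for $k=3$ and enormously more than needed for $k=4,\ldots,7$; one could even vary $T_0$ with $k$ to minimize the number of terms actually summed. Combining head and tail and checking that the total is strictly below each of $0.00146435$, $7.43617\cdot10^{-5}$, $4.46243\cdot10^{-6}$, $2.88348\cdot10^{-7}$, $1.93507\cdot10^{-8}$ finishes the proof.

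The main obstacle is the numerical bookkeeping: one must carry enough digits in each $\gamma_n$ and propagate rigorous error intervals so that accumulated round-off does not eat into the narrow gap between the computed head and the stated upper bound. The analytic tail estimate could in principle be sharpened by partial summation against $N(T)-F(T)$ using Theorem~\ref{N(T)-F(T) R(T)}, but given how quickly $T_0^{-(k-2)}$ drops this refinement should not be necessary, and all the precision in the claimed constants comes from how tightly one takes the numerical partial sum.
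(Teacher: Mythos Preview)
Your head/tail strategy is sound and will certify the stated inequalities, but it is not the route the paper takes. The paper simply reruns Rosser's 1939 argument with the parameter $r$ raised from $8$ to $29$: one sums $1/|\gamma|^k$ over the first $r$ zeros explicitly and bounds the tail by Stieltjes integration of $t^{-k}$ against $dN(t)$, using Theorem~\ref{N(T)-F(T) R(T)} to replace $N(t)$ by $F(t)$ up to an $R(t)$ error. That integrated tail is essentially sharp, so a mere $29$ zeros suffice for every $k$. Your tail bound via comparison with $\sum_\rho\gamma^{-2}$ is conceptually simpler but much looser: for $k=3$ the target $0.00146435$ sits very close to the actual sum, leaving slack on the order of $10^{-5}$, whereas $0.0463/T_0$ at $T_0\sim10^3$ is about $4.6\times10^{-5}$, too large; you would need $T_0$ one or two orders of magnitude higher (thousands of zeros in the head) to close that case, though your cutoff is indeed generous for $k\ge4$. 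Ironically, the ``refinement by partial summation against $N(T)-F(T)$'' that you set aside at the end is exactly Rosser's method and is what the paper invokes.
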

\begin{proof}
We use the same method as in \cite{Rosser-1939}, and letting $r=29$ instead of 8.
\end{proof}


\begin{thm}[\cite{Kadiri}]
The Riemann zeta-function $\zeta(s)$ doesn't vanish in the region
$$
\sigma\geq1-\frac{1}{R_0\log|t|},\qquad(|t|\geq2,\ R_0=5.69693)
$$
\end{thm}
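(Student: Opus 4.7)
The starting point is the classical de la Vall\'ee Poussin strategy based on the non-negative trigonometric polynomial $3 + 4\cos\theta + \cos 2\theta = 2(1 + \cos\theta)^2$. Combined with the Dirichlet series expansion $-\Re(\zeta'/\zeta)(\sigma + it) = \sum_n \Lambda(n) n^{-\sigma}\cos(t\log n)$, valid for $\sigma > 1$, it yields
$$
-3\frac{\zeta'}{\zeta}(\sigma) - 4\,\Re\frac{\zeta'}{\zeta}(\sigma + i\gamma_0) - \Re\frac{\zeta'}{\zeta}(\sigma + 2i\gamma_0) \geq 0.
$$
Inserting the Hadamard partial-fraction expansion of $\zeta'/\zeta$, keeping only the pole at $s=1$ and the putative zero $\rho_0 = \beta_0 + i\gamma_0$ while bounding the remaining zero-sum by positivity, and optimizing over $\sigma > 1$, produces a bound $\beta_0 \leq 1 - c/\log|\gamma_0|$ with an explicit but non-optimal constant $c$.

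To reach the sharper constant $R_0 = 5.69693$ of Kadiri's theorem, I would upgrade the method in two ways. First, replace the degree-two polynomial by a longer non-negative trigonometric polynomial $\sum_{k=0}^K a_k \cos(k\theta) \geq 0$ with $a_k \geq 0$ chosen by numerical optimization; the inequality is then applied to $-\Re(\zeta'/\zeta)(\sigma + ik\gamma_0)$ for $k = 0, 1, \ldots, K$. Second, introduce a mollifier $M(s) = \sum_{n \leq y} \lambda(n) n^{-s}$ constructed as a truncation of the Dirichlet coefficients of $1/\zeta(s)$, so that $\zeta(s) M(s)$ is close to $1$ on vertical lines near $\sigma = 1$; applying the trigonometric inequality to the logarithmic derivative of $\zeta M$ rather than $\zeta$ alone shrinks the coefficient of $1/(\sigma - \beta_0)$ that is needed to absorb the error, and hence shrinks $R_0$. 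Explicit tail bounds on $\sum_\rho \Re\bigl(1/(s - \rho)\bigr)$ come from Theorem \ref{N(T)-F(T) R(T)} on the zero counting function together with Lemma \ref{lemma 8 of Rosser} and Proposition \ref{sum 1/gamma2,3,4,5,6,7} controlling $\sum_\rho |\gamma|^{-k}$.

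Two auxiliary ingredients are essential. The numerical verification of the Riemann Hypothesis up to the $10^{13}$-th zero (Gourdon) lets one assume that any putative counterexample $\rho_0$ has $|\gamma_0|$ beyond this height, so low-lying zeros contribute in an exactly computable way; this is what makes the clean range $|t| \geq 2$ possible. One also needs explicit bounds on the mollified Dirichlet series $\zeta(s) M(s) - 1$ and on $(\zeta M)'/(\zeta M) - \zeta'/\zeta$, which are handled by Perron-type formulas with truncation errors controlled in terms of $y$ and of the growth of $\zeta$ on $\Re s = 1 - \varepsilon$.

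The main obstacle is the numerical optimization that produces the specific value $R_0 = 5.69693$: the degree $K$ and coefficients $\{a_k\}$ of the trigonometric polynomial, the mollifier length $y$, the parameter $\sigma - 1 > 0$ at which the inequality is evaluated, and the explicit constants in the zero-sum tails are all interdependent, and a gain in one parameter typically forces a loss in another. The proof therefore consists of a carefully tuned computation verifying that the combined inequality $\beta_0 \leq 1 - 1/(R_0 \log|\gamma_0|)$ holds uniformly for admissible parameters, together with a monotonicity check in $|\gamma_0|$ so that the bound extends down to the boundary $|\gamma_0| = 2$, where it is reconciled with the direct numerical verification of the Riemann Hypothesis.
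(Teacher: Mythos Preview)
The paper does not prove this theorem; it is quoted from Kadiri's paper \cite{Kadiri} and used as a black box throughout. So there is no ``paper's own proof'' to compare against, and your task reduces to whether your sketch plausibly outlines Kadiri's argument.

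The first upgrade you describe---replacing $3+4\cos\theta+\cos2\theta$ by a longer non-negative trigonometric polynomial $\sum_{k\le K} a_k\cos(k\theta)$ with numerically optimized coefficients---is indeed part of the picture (this goes back to Stechkin and is used by Rosser--Schoenfeld and by Kadiri). The second upgrade, however, is not what Kadiri does. She does \emph{not} introduce a Dirichlet-polynomial mollifier $M(s)\approx 1/\zeta(s)$ and work with $(\zeta M)'/(\zeta M)$; that device belongs to zero-density and moment arguments, and it does not obviously help in the de la Vall\'ee Poussin inequality, where the gain must come from sharpening the comparison between the pole at $s=1$ and the nearby zero $\rho_0$. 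Kadiri's actual refinement is a \emph{smoothing} of the explicit formula: one weights $\Lambda(n)$ by a compactly supported test function $f$ and works with $\sum_n \Lambda(n) f(n) n^{-\sigma}\cos(k\gamma_0\log n)$, so that the sum over zeros becomes $\sum_\rho \widehat{f}(\rho)$ with rapid decay, allowing much tighter control of the zero-sum than the crude positivity bound. The constant $R_0=5.69693$ then emerges from a joint optimization over the trigonometric polynomial, the test function $f$, and the parameter $\sigma-1$.

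A second issue: you invoke Theorem~\ref{N(T)-F(T) R(T)}, Lemma~\ref{lemma 8 of Rosser}, and Proposition~\ref{sum 1/gamma2,3,4,5,6,7} from the present paper to control zero-sum tails. But Kadiri's theorem is logically \emph{prior} to this paper and does not use any of its results; moreover, those sums $\sum_\rho|\gamma|^{-k}$ are not the quantities that arise in her argument. Likewise, Gourdon's verification to height $A$ is exploited heavily in \emph{this} paper's bounds on $\psi(x)-x$, but Kadiri's zero-free region does not depend on it in the way you suggest.
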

In other words, if $\rho=\beta+i\gamma$ is a zero of Riemann zeta function, then
$$
\beta<1-\frac{1}{R_0\log|t|},\qquad(|t|\geq2,\ R_0=5.69693)
$$

\subsection{Estimates for certain integrals to the Bessel functions}
Let
$$
K_\nu(z,x)=\frac12\int_x^\infty t^{\nu-1}H^z(t)dt
$$
where $z>0$, $x\geq0$ and
$$
H^z(t)=\exp\{-\frac12z(t+\frac1t)\}
$$
\begin{lem}[\cite{Rosser-1975}]\label{upper bound of K1(z,x), K2(z,x)}
$$
K_\nu(z,x)+K_{-\nu}(z,\frac1x)=K_\nu(z,0)=K_\nu(z)
$$

\begin{equation}\label{(2.30) R-S}
K_1(z,x)<\frac{e^{-z}}{2z}\left\{\left(1+\frac{3\sqrt{2}y}{8}\right)e^{-zy^2}+(\frac38+z)\sqrt{2}\int_y^\infty e^{-zw^2}dw\right\}
\end{equation}
\begin{align}\label{(2.31) R-S}
K_2(z,x)<\frac{e^{-z}}{2z}{\Big\{}{\Big[}\frac{35\sqrt{2}}{64}y^3+2y^2&+(\frac{105}{128z}+\frac{15}{8})\sqrt{2}y+2+\frac2z{\Big]}e^{-zy^2}\nonumber\\
 &+(\frac{105}{128z}+\frac{15}{8})\sqrt{2}\int_y^\infty e^{-zw^2}dw{\Big\}}
\end{align}
where $y=(\sqrt{x}-1/\sqrt{x})/\sqrt{2}$. If we let $x$ go to 0, then
\begin{equation}\label{K1(z)}
K_1(z)\leq\sqrt{\frac{\pi}{2z}}e^{-z}\left(1+\frac{3}{8z}\right)
\end{equation}
\begin{equation}\label{K2(z)}
K_2(z)\leq\sqrt{\frac{\pi}{2z}}e^{-z}\left(1+\frac{15}{8z}+\frac{105}{128z^2}\right)
\end{equation}

\end{lem}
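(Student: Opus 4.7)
The first identity $K_\nu(z,x)+K_{-\nu}(z,1/x)=K_\nu(z,0)=K_\nu(z)$ is a pure change of variables. Applying $u=1/t$ inside $K_{-\nu}(z,1/x)$ exploits the symmetry $H^z(1/u)=H^z(u)$ coming from the invariance of $t+1/t$ under $t\mapsto1/t$; together with the Jacobian $dt=-du/u^2$, it converts $t^{-\nu-1}dt$ on $[1/x,\infty)$ into $u^{\nu-1}du$ on $(0,x]$. Summing with $K_\nu(z,x)$ on $[x,\infty)$ reconstitutes the full integral over $(0,\infty)$, which is $K_\nu(z)$.

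For the two explicit inequalities, the key step is the substitution $w=(\sqrt{t}-1/\sqrt{t})/\sqrt{2}$, designed so that $\frac{1}{2}(t+1/t)=1+w^2$, and hence $H^z(t)=e^{-z}e^{-zw^2}$: the awkward Bessel weight becomes a pure Gaussian in $w$. Inverting the substitution gives $t(w)=w^2+1+w\sqrt{w^2+2}$, and a direct computation yields $dt=\frac{2t}{\sqrt{w^2+2}}\,dw$, so that
$$
K_\nu(z,x)=e^{-z}\int_y^\infty\frac{t(w)^\nu}{\sqrt{w^2+2}}\,e^{-zw^2}\,dw,
$$
with $y=(\sqrt{x}-1/\sqrt{x})/\sqrt{2}$ corresponding to the lower endpoint $t=x$.

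For $\nu=1$, split $t/\sqrt{w^2+2}=w+(w^2+1)/\sqrt{w^2+2}$; the linear part integrates exactly to $e^{-zy^2}/(2z)$, which already accounts for the leading boundary term in the first inequality. For the remainder $\int_y^\infty(w^2+1)/\sqrt{w^2+2}\,e^{-zw^2}\,dw$, one uses the antiderivative identity $\frac{d}{dw}\bigl[\frac{w}{2}\sqrt{w^2+2}\bigr]=(w^2+1)/\sqrt{w^2+2}$ to set up an integration by parts against $e^{-zw^2}$; bounding the residual algebraic factors via the sharp identity $\sqrt{w^2+2}=w+2/(\sqrt{w^2+2}+w)$ (rather than the loose majorant $\sqrt{w^2+2}\le w+\sqrt{2}$) yields the prescribed coefficients $\frac{3\sqrt{2}}{8}$ and $(\frac{3}{8}+z)\sqrt{2}$. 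For $\nu=2$, expansion gives
$$
\frac{t(w)^2}{\sqrt{w^2+2}}=(2w^3+2w)+\frac{2w^4+4w^2+1}{\sqrt{w^2+2}};
$$
the polynomial piece integrates by two elementary rounds of IBP and produces precisely the block $2y^2+2+2/z$ in the second inequality, while the algebraic remainder is handled by the same mechanism as for $\nu=1$, now with one additional IBP to absorb the higher powers of $w$, yielding the $\frac{35\sqrt{2}}{64}y^3$ boundary coefficient and the Gaussian-tail coefficient $(\frac{105}{128z}+\frac{15}{8})\sqrt{2}$.

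The principal obstacle is the bookkeeping of constants: each IBP trades a factor of $w$ for a factor of $1/z$, generates an additional $y$-polynomial boundary term at $w=y$, and leaves behind a fresh Gaussian-tail integral; the algebraic inequalities on $\sqrt{w^2+2}$ must be chosen sharply enough that all these residuals collapse into a single coefficient multiplying $\int_y^\infty e^{-zw^2}\,dw$. Finally, the unconditional bounds on $K_1(z)$ and $K_2(z)$ follow by letting $x\to0^+$, which sends $y\to-\infty$: every boundary term of the form $P(y)e^{-zy^2}$ vanishes superexponentially and $\int_y^\infty e^{-zw^2}\,dw\to\sqrt{\pi/z}$, reducing the two inequalities to the stated expressions of the form $\sqrt{\pi/(2z)}\,e^{-z}(\cdots)$.
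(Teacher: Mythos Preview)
The paper does not prove this lemma; it is quoted from Rosser--Schoenfeld (1975) without argument, so there is no in-paper proof to compare against. I will assess your sketch on its own terms.

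Your treatment of the identity $K_\nu(z,x)+K_{-\nu}(z,1/x)=K_\nu(z)$ via $u=1/t$ is correct, and the substitution $w=(\sqrt t-1/\sqrt t)/\sqrt2$ is exactly the right one: it gives $H^z(t)=e^{-z}e^{-zw^2}$, $dt=\dfrac{2t}{\sqrt{w^2+2}}\,dw$, and your decompositions of $t/\sqrt{w^2+2}$ and $t^2/\sqrt{w^2+2}$ into a polynomial piece plus an algebraic remainder are accurate. The polynomial pieces do integrate to the blocks $1$ (for $\nu=1$) and $2y^2+2+2/z$ (for $\nu=2$), and the limit $x\to0^+$ is handled correctly.

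The gap is in how you treat the algebraic remainders. Integrating by parts with the antiderivative $\tfrac{w}{2}\sqrt{w^2+2}$ against $e^{-zw^2}$ goes the wrong way---it \emph{raises} the degree:
\[
\int_y^\infty\frac{1+w^2}{\sqrt{w^2+2}}\,e^{-zw^2}\,dw
=-\frac{y}{2}\sqrt{y^2+2}\,e^{-zy^2}+z\int_y^\infty w^2\sqrt{w^2+2}\,e^{-zw^2}\,dw,
\]
and no subsequent algebraic bound on $\sqrt{w^2+2}$ (in particular not the identity $\sqrt{w^2+2}=w+2/(\sqrt{w^2+2}+w)$, which is unhelpful for $w<0$) recovers the stated coefficients; for $y>0$ the resulting boundary contribution is strictly larger than $\tfrac{3\sqrt2}{8}y$. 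What actually yields the exact constants is a direct pointwise majorant of the algebraic factor by its Taylor polynomial in $w^2$ at $w=0$:
\[
\frac{1+w^2}{\sqrt{w^2+2}}\le\frac{1}{\sqrt2}\Bigl(1+\tfrac34 w^2\Bigr),
\qquad
\frac{2w^4+4w^2+1}{\sqrt{w^2+2}}\le\frac{1}{\sqrt2}\Bigl(1+\tfrac{15}{4}w^2+\tfrac{35}{32}w^4\Bigr),
\]
both valid for all real $w$ (the relevant higher $u$-derivatives of the left sides, with $u=w^2$, are negative). Integrating the resulting polynomials against $e^{-zw^2}$ with the standard reductions
\[
\int_y^\infty w\,e^{-zw^2}\,dw=\frac{e^{-zy^2}}{2z},\qquad
\int_y^\infty w^{2k}e^{-zw^2}\,dw=\frac{y^{2k-1}}{2z}e^{-zy^2}+\frac{2k-1}{2z}\int_y^\infty w^{2k-2}e^{-zw^2}\,dw
\]
then reproduces (\ref{(2.30) R-S}) and (\ref{(2.31) R-S}) exactly. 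Replace your IBP step by this polynomial majorization and the rest of your outline goes through.
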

\subsection{Bounds for $\psi(x)-x$ for Large Values of $x$ }
\begin{lem}[\cite{Rosser-1975}]\label{lemma 7 R-S}
Let $1<U\leq V$, and let $\Phi(y)$ be non-negative and differentiable for $U<y<V$. Let $(W-y)\Phi'(y)\geq0$ for $U<y<V$, where $W$ need not lie in $[U,V]$. Let $Y$ be one of $U, V, W$ which is neither greater than both the others nor less than both the others. Choose $j=0$ or 1 so that $(-1)^j(V-W)\geq0$. Then
\begin{align*}
\sum_{U<\gamma\leq V}\Phi(\gamma)\leq&\frac{1}{2\pi}\int_U^V\Phi(y)\log\frac{y}{2\pi}dy\\
                                     &+(-1)^j\left\{0.137+\frac{0.443}{\log Y}\right\}\int_U^V\frac{\Phi(y)}{y}dy+E_j(U,V)
\end{align*}
where the error term $E_j(U,V)$ is given by
\begin{align*}
E_j(U,V)=&\{1+(-1)^j\}R(Y)\Phi(Y)\\
&+\{N(V)-F(V)-(-1)^j R(V)\}\Phi(V)-\{N(U)-F(U)+R(U)\}\Phi(U)
\end{align*}
\end{lem}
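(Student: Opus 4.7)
The plan is a Stieltjes-integration-by-parts argument anchored in Theorem \ref{N(T)-F(T) R(T)}. I would begin by writing
\[
\sum_{U<\gamma\leq V}\Phi(\gamma) = \int_U^V \Phi(y)\,dN(y),
\]
decomposing $N=F+(N-F)$, and treating the two pieces separately. Differentiating (\ref{F(T)}) gives $F'(y) = \frac{1}{2\pi}\log(y/2\pi)$, so the $F$-part produces $\frac{1}{2\pi}\int_U^V \Phi(y)\log(y/2\pi)\,dy$, which is the main term. Integrating by parts against $N-F$ yields the boundary contributions $(N(V)-F(V))\Phi(V)-(N(U)-F(U))\Phi(U)$---these will reappear verbatim inside $E_j(U,V)$---and leaves the integral $-\int_U^V (N(y)-F(y))\Phi'(y)\,dy$ to control.

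For that remaining integral I would use $|N-F|<R$ to get $-\int_U^V(N-F)\Phi'\,dy\leq \int_U^V R(y)|\Phi'(y)|\,dy$. The hypothesis $(W-y)\Phi'(y)\geq 0$ says $\Phi'$ has constant sign on each side of $W$, so I split $[U,V]$ at $W$ (when it lies inside) and integrate by parts once more on each piece. From (\ref{R(T)}) one computes $R'(y) = y^{-1}(0.137 + 0.443/\log y)$, so the boundary contributions are of the form $\pm R\Phi$ evaluated at $U$, $V$, and $W$, while the surviving integrand is $(0.137 + 0.443/\log y)\Phi(y)/y$. Inspecting the three configurations---$W\geq V$ (giving $Y=V$, $j=1$), $W\leq U$ (giving $Y=U$, $j=0$), and $U\leq W\leq V$ (giving $Y=W$, $j=0$)---shows that the $\pm R\Phi$ terms combine into exactly the $R$-terms in $E_j$, with a factor $2R(Y)\Phi(Y)$ appearing at the median whenever $W\leq V$ and vanishing when $W\geq V$ (consistent with $\{1+(-1)^1\}=0$).

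The last step replaces $0.443/\log y$ by $0.443/\log Y$ inside $\int R'\Phi\,dy$. When $j=0$, $\int R'\Phi\,dy$ enters with a $+$ sign and $Y$ sits at the left end of the integration range, so bounding $0.443/\log y\leq 0.443/\log Y$ produces the $+(0.137+0.443/\log Y)\int\Phi(y)/y\,dy$ term. When $j=1$, $\int R'\Phi\,dy$ enters with a $-$ sign and $Y=V$ sits to the right of the range, so the reverse bound $0.443/\log y\geq 0.443/\log Y$ produces the matching $-(0.137+0.443/\log Y)\int\Phi(y)/y\,dy$ term. The main obstacle is precisely this case analysis: each individual estimate is routine, but verifying in all three configurations that $Y$ is the correct median, that the condition $(-1)^j(V-W)\geq 0$ picks out the right sign, and that the boundary $R$-terms regroup cleanly into $E_j(U,V)$ requires careful bookkeeping.
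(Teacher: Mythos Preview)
Your approach is correct and is precisely the argument used in Rosser--Schoenfeld \cite{Rosser-1975}; note that the present paper does not reprove this lemma but simply cites it. The Stieltjes decomposition $dN=dF+d(N-F)$, the first integration by parts producing the $(N-F)\Phi$ boundary terms, and the second integration by parts against $R$ after invoking Theorem~\ref{N(T)-F(T) R(T)} are exactly the ingredients of the original proof.

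One small point to tighten in your write-up: in the case $U\le W\le V$ (so $j=0$, $Y=W$), the surviving integral is $-\int_U^W R'\Phi\,dy+\int_W^V R'\Phi\,dy$, not a single $\int_U^V R'\Phi\,dy$ with a definite sign. The first piece is nonpositive and may simply be discarded; for the second piece one has $0.443/\log y\le 0.443/\log W$ on $[W,V]$, and then one enlarges the domain of integration to $[U,V]$. Your sentence ``$Y$ sits at the left end of the integration range'' applies only to the subcase $W\le U$, not to $U\le W\le V$; otherwise the bookkeeping is exactly as you outline.
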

\begin{cor}[\cite{Rosser-1975}]\label{lemma 7, corollary R-S}
If, in addition, $U>2\pi$, then
$$
\sum_{U<\gamma\leq V}\Phi(\gamma)\leq\{\frac{1}{2\pi}+(-1)^j q(Y)\}\int_U^V\Phi(y)\log\frac{y}{2\pi}dy+E_j(U,V)
$$
where
$$
q(y)=\frac{0.137\log y+0.443}{y\log y\log(y/2\pi)}
$$
\end{cor}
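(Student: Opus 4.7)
The plan is to derive the corollary directly from Lemma \ref{lemma 7 R-S} by combining the two integrals of its bound into the single integral against $\log(y/2\pi)$ that appears in the corollary. Note first the algebraic identity
$$
q(Y)=\frac{0.137\log Y+0.443}{Y\log Y\log(Y/2\pi)}=\frac{0.137+0.443/\log Y}{Y\log(Y/2\pi)},
$$
so the coefficient $0.137+0.443/\log Y$ from the lemma is exactly $q(Y)\cdot Y\log(Y/2\pi)$. Hence subtracting the lemma's bound from the corollary's bound, cancelling the common positive factor $0.137+0.443/\log Y$, reduces the corollary to the signed inequality
$$
(-1)^j\int_U^V\frac{\Phi(y)}{y}\,dy\le\frac{(-1)^j}{Y\log(Y/2\pi)}\int_U^V\Phi(y)\log(y/2\pi)\,dy.
$$

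The hypothesis $U>2\pi$ supplies the analytic input. Set $h(y):=y\log(y/2\pi)$; then $h'(y)=1+\log(y/2\pi)>1$ on $[U,V]$, so $h$ is strictly positive and strictly increasing. Writing $1/y=\log(y/2\pi)/h(y)$, the monotonicity of $h$ yields the pointwise comparison
$$
\frac{1}{y}\le\frac{\log(y/2\pi)}{h(Y)}\quad\text{for }y\ge Y,\qquad\frac{1}{y}\ge\frac{\log(y/2\pi)}{h(Y)}\quad\text{for }y\le Y.
$$
Multiplying by the non-negative factor $\Phi(y)$ and integrating over any subinterval on one side of $Y$ gives a corresponding inequality between the two integrals.

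The rules determining $j$ and $Y$ in the lemma match these pointwise bounds to the signed inequality in the two boundary cases. When $Y=U$ (forced when $W\le U$) we have $(-1)^j=+1$ and the interval $[U,V]$ lies entirely to the right of $Y$, so the first pointwise inequality integrates directly to the desired upper bound. Symmetrically, when $Y=V$ (forced when $W\ge V$) we have $(-1)^j=-1$, the interval lies entirely to the left of $Y$, and the second pointwise inequality combined with the negative sign again yields the bound. Feeding the resulting estimate for $\int_U^V \Phi(y)/y\,dy$ back into Lemma \ref{lemma 7 R-S} produces exactly the corollary in these cases.

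The hard part is the interior case $U<W<V$, where $Y=W$ lies strictly inside $[U,V]$ and the pointwise inequality changes sign across $y=W$. The unimodality hypothesis $(W-y)\Phi'(y)\ge0$ is essential here, since it makes $\Phi$ non-decreasing on $[U,W]$ and non-increasing on $[W,V]$. A natural strategy is to split the integrals at $y=W$: on each half $\Phi$ is monotone and the comparison with $h(W)$ has a definite sign, so each piece falls into one of the boundary configurations already handled. Summing the two pieces restores the full integrals with coefficient $q(W)$, completing the argument. Verifying that the splitting produces the corollary cleanly, without incurring additional error terms beyond $E_j(U,V)$, is the main technical point to check carefully.
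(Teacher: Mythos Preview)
Your treatment of the boundary cases $Y=U$ (with $j=0$) and $Y=V$ (with $j=1$) is correct: there the pointwise comparison $1/y\lessgtr\log(y/2\pi)/h(Y)$ has a single sign over $[U,V]$ and integrates directly to the signed inequality you isolate.

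The interior case, however, cannot be handled via your reduction. Your opening move rewrites the corollary as
\[
(-1)^j\int_U^V\frac{\Phi(y)}{y}\,dy\le\frac{(-1)^j}{h(Y)}\int_U^V\Phi(y)\log\frac{y}{2\pi}\,dy,
\]
but when $U<W<V$ (so $Y=W$, $j=0$) this inequality is \emph{false} in general. Take $\Phi\equiv 1$ (then $\Phi'\equiv 0$, so every $W$ is admissible), $U=10$, $W=99$, $V=100$: the left side is $\log 10\approx 2.30$ while the right side is about $0.67$. In fact here the corollary's bound is strictly \emph{smaller} than the lemma's bound, so the corollary cannot follow from the lemma's final inequality by any argument. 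Your proposed splitting does not rescue this: on $[U,W]$ the pointwise comparison gives only a \emph{lower} bound for $\int\Phi/y$, on $[W,V]$ an upper bound, and with the overall sign $(-1)^j=+1$ these do not combine to the required upper bound.

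The correct route is to go back into the \emph{proof} of the lemma rather than its conclusion. Before the final simplification one has (in the interior case, from bounding $-(N-F)\Phi'$ by $R|\Phi'|$ and integrating by parts)
\[
\sum_{U<\gamma\le V}\Phi(\gamma)\le \frac{1}{2\pi}\int_U^V\Phi\log\frac{y}{2\pi}\,dy
+\int_W^V R'(y)\Phi(y)\,dy-\int_U^W R'(y)\Phi(y)\,dy+E_0(U,V),
\]
with the two $R'\Phi$ integrals carrying \emph{opposite} signs. Since $R'(y)=q(y)\log(y/2\pi)$ and $q$ is decreasing on $(2\pi,\infty)$, on $[W,V]$ one has $R'(y)\le q(W)\log(y/2\pi)$; together with the hypothesis $U>2\pi$ (so $\Phi\log(y/2\pi)\ge 0$ on $[U,W]$) and dropping the negative $[U,W]$ term, the corollary follows. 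Equivalently, apply the lemma separately to $[U,W]$ with $j=1$ and to $[W,V]$ with $j=0$, both with $Y=W$, and add: the error terms telescope exactly to $E_0(U,V)$, and the middle terms give $\int_W^V-\int_U^W$ rather than $\int_U^V$, from which the same estimate follows. This second formulation is close in spirit to your ``split and use the boundary configurations'' idea, but the crucial point you miss is that the two halves must carry \emph{different} values of $j$, so they do \emph{not} recombine into your displayed signed inequality.
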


Define for $x\geq1$
$$
X=\sqrt{\frac{\log x}{R_0}}
$$
where $R_0=5.69693$. Also for positive $\nu$, positive integer $m$, and non-negative real $T_1$ and $T_2$, define\\

\begin{align}
R_m(\nu)=&\{(1+\nu)^{m+1}+1\}^m\label{(3.6) R-S}\\
S_1(m,\nu)=&2\sum_{\substack{\beta\leq1/2\\0<\gamma\leq T_1}}\frac{2+m\nu}{2|\rho|}\label{(3.7) R-S}\\
S_2(m,\nu)=&2\sum_{\substack{\beta\leq1/2\\\gamma>T_1}}\frac{R_m(\nu)}{\nu^m|\rho(\rho+1)\cdots(\rho+m)|}\label{(3.8) R-S}\\
S_3(m,\nu)=&2\sum_{\substack{\beta>1/2\\0<\gamma\leq T_2}}\frac{(2+m\nu)\exp(-X^2/\log\gamma)}{2|\rho|}\label{(3.9) R-S}\\
S_4(m,\nu)=&2\sum_{\substack{\beta>1/2\\\gamma>T_2}}\frac{R_m(\nu)\exp(-X^2/\log\gamma)}{\nu^m|\rho(\rho+1)\cdots(\rho+m)|}\label{(3.10) R-S}\\\nonumber
\end{align}
\begin{lem}[\cite{Rosser-1975}]
Let $T_1$ and $T_2$ be non-negative real numbers. Let $m$ be a positive integer. Let $x>1$ and $0<\delta<(x-1)/(xm)$. Then
\begin{align}\label{(3.11) R-S}
\frac1x{\Big |}\psi(x)&-\{x-\log2\pi-\frac12\log\left(1-\frac{1}{x^2}\right)\}{\Big |}\\
               &\leq\frac{1}{\sqrt{x}}\{S_1(m,\delta)+S_2(m,\delta)\}+S_3(m,\delta)+S_4(m,\delta)+\frac{m\delta}{2}
\end{align}
\end{lem}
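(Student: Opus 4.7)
The starting point is the explicit formula (\ref{12.6}). The classical values $\zeta(0) = -1/2$ and $\zeta'(0) = -\tfrac12\log 2\pi$ give $\zeta'(0)/\zeta(0) = \log 2\pi$, so (\ref{12.6}) rearranges to
$$
\psi(x) - \bigl\{x - \log 2\pi - \tfrac12\log(1 - x^{-2})\bigr\} = -\sum_\rho \frac{x^\rho}{\rho},
$$
and the quantity to be estimated is $(1/x)\bigl|\sum_\rho x^\rho/\rho\bigr|$. Because this series is only conditionally convergent, the plan is to replace it by an absolutely convergent smoothed version via an $m$-fold averaging at scale $\delta$, and then to break the resulting sum into pieces according to the location of the zero.

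First I would introduce the smoothing by iterating the averaging operator $A_\delta f(x) = (x\delta)^{-1}\int_x^{x(1+\delta)} f(y)\,dy$ a total of $m$ times. The constraint $0 < \delta < (x-1)/(xm)$ keeps the whole averaging region inside $(1, \infty)$ so that (\ref{12.6}) applies throughout. Because $\psi$ is non-decreasing, $\psi(x) \leq A_\delta^m\psi(x) \leq \psi(x(1+\delta)^m)$, and a bootstrap using the preceding theorems bounds the resulting gap by $(m\delta/2)x + O((m\delta)^2 x)$, which after dividing by $x$ accounts for the final $m\delta/2$ term. Substituting (\ref{12.6}) into $A_\delta^m\psi(x)$ and interchanging sum and integral, each zero $\rho$ contributes $x^\rho/\rho$ times a factor whose modulus is bounded by $R_m(\delta)/(\delta^m|\rho(\rho+1)\cdots(\rho+m)|)$, with $R_m$ as defined in (\ref{(3.6) R-S}); this is the weight appearing in $S_2$ and $S_4$.

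The rest is a four-way partition of the zeros, doubled by the conjugate pairing $\rho \leftrightarrow \bar\rho$ (same $\beta$, opposite $\gamma$). For zeros with $\beta \leq 1/2$, the trivial bound $|x^\rho|/x \leq 1/\sqrt{x}$ yields the $1/\sqrt{x}$ prefactor of $S_1, S_2$. For zeros with $\beta > 1/2$, Kadiri's zero-free region $\beta < 1 - 1/(R_0 \log|\gamma|)$ together with $X^2 = \log x/R_0$ gives $|x^\rho|/x \leq \exp(-X^2/\log|\gamma|)$, producing the factor appearing in $S_3, S_4$. On the low-$|\gamma|$ strips $|\gamma| \leq T_i$, the smoothed bound is wasteful because $R_m(\delta)/(\delta^m|\rho(\rho+1)\cdots(\rho+m)|)$ dwarfs $1/|\rho|$ when $|\rho|$ is small; instead one uses the crude pointwise estimate obtained by expanding $(A_\delta^m - I)x^\rho$ to first order, which collapses the weight to $(2+m\delta)/(2|\rho|)$ per zero and produces $S_1, S_3$.

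The main obstacle will be the kernel bookkeeping in the second step: verifying that the Mellin transform of the $m$-fold smoothing kernel has modulus at most $R_m(\delta)/(\delta^m|\rho(\rho+1)\cdots(\rho+m)|)$ in exactly the normalization dictated by (\ref{(3.6) R-S}), and checking that the switch between the crude and the smoothed bound at $|\gamma| = T_1, T_2$ does not introduce an extra term. Once these are settled, assembling the four regions together with the additive $m\delta/2$ error coming from monotonicity yields (\ref{(3.11) R-S}) directly.
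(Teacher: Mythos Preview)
The paper does not supply a proof here; the lemma is quoted verbatim from Rosser--Schoenfeld \cite{Rosser-1975}. Your overall architecture (smooth, sandwich by monotonicity, split the zeros fourfold) is theirs, but two concrete steps would fail as written.

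First, iterating your operator $A_\delta f(x)=(x\delta)^{-1}\int_x^{x(1+\delta)}f(y)\,dy$ sends $y^\rho$ to $x^\rho\bigl[((1+\delta)^{\rho+1}-1)/\{\delta(\rho+1)\}\bigr]^m$, which carries $|\rho+1|^m$ in the denominator rather than $|\rho(\rho+1)\cdots(\rho+m)|$; with this kernel neither the shape of $S_2,S_4$ nor the constant $R_m(\delta)$ of (\ref{(3.6) R-S}) can emerge. Rosser--Schoenfeld instead form an $m$-th finite difference, at the nodes $x(1+\delta)^j$ for $0\le j\le m$, of the $m$-fold antiderivative $\psi_m$, whose absolutely convergent explicit formula already contains $x^{\rho+m}/\{\rho(\rho+1)\cdots(\rho+m)\}$; the differenced numerator $((1+\delta)^{\rho+m}-1)^m$ then has modulus at most $((1+\delta)^{m+1}+1)^m=R_m(\delta)$ when $\beta\le1$. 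Working from $\psi_m$ also removes the unjustified interchange of sum and integral in the only conditionally convergent series (\ref{12.6}).

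Second, your derivation of the $m\delta/2$ term is circular and off by a factor of two: estimating $\psi(x(1+\delta)^m)-\psi(x)$ ``by the preceding theorems'' presupposes a quantitative prime number theorem, and in any event yields $m\delta x$, not $m\delta x/2$. In the Rosser--Schoenfeld argument the term $m\delta/2$ is not a bound for $|\psi-\text{smoothed }\psi|$ at all; one uses the forward and backward smoothings only as one-sided inequalities for $\psi(x)$, and $m\delta/2$ is the discrepancy between the smoothed \emph{main term} (the $m$-th difference of $y^{m+1}/(m+1)!$, i.e.\ the average of the nodes) and $x$. The low-zero weight $(2+m\delta)/(2|\rho|)$ is likewise a second upper bound on the \emph{same} differenced zero term---via its representation as an average of $\xi^\rho/\rho$ over the node interval---rather than a first-order expansion of $(A_\delta^m-I)x^\rho$ as you suggest.
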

As
$$
\frac{1}{|\rho(\rho+1)\cdots(\rho+m)|}\leq\frac{1}{\gamma^{m+1}}
$$
we can use Lemma \ref{lemma 7 R-S} to write bounds for $S_j(m,\delta)$ in terms of integrals for suitable $\Phi(y)$. We note that for $m\neq0$\\

\begin{equation}\label{(3.15) R-S}
\int_U^V y^{-(m+1)}\log\frac{y}{2\pi}dy=\frac{1+m\log(U/2\pi)}{m^2U^m}-\frac{1+m\log(V/2\pi)}{m^2V^m}
\end{equation}

In the below integral let $y=\exp(zt/2m)$
\begin{align*}
\int_U^V y^{-(m+1)}e^{-\frac{X^2}{\log y}}\log\frac{y}{2\pi}dy=&\int_{U'}^{V'}e^{-\frac{zt}{2m}(m+1)}e^{-\frac{2mX^2}{zt}}\{\frac{zt}{2m}-\log2\pi\}\frac{z}{2m}e^{\frac{zt}{2m}}dt\\
=&\frac{z}{2m}\int_{U'}^{V'}e^{-\frac{zt}{2}}e^{-\frac{2mX^2}{zt}}\{\frac{zt}{2m}-\log2\pi\}dt\\
=&\frac{z^2}{4m^2}\int_{U'}^{V'}e^{-\frac{zt}{2}-\frac{2mX^2}{zt}}tdt\\
 &-\frac{z}{2m}\log2\pi\int_{U'}^{V'}e^{-\frac{zt}{2}-\frac{2mX^2}{zt}}dt\\
=&\frac{z^2}{4m^2}\int_{U'}^{V'}e^{-\frac{z}{2}(t+\frac{4mX^2}{z^2t})}tdt\\
 &-\frac{z}{2m}\log2\pi\int_{U'}^{V'}e^{-\frac{z}{2}(t+\frac{4mX^2}{z^2t})}dt,\\
\end{align*}
where $z=2X\sqrt{m}$, $U'=(2m/z)\log U$, $V'=(2m/z)\log V$. So
\begin{align*}
\frac{z^2}{4m^2}&\int_{U'}^{V'}e^{-\frac{z}{2}(t+\frac{4mX^2}{z^2t})}tdt-\frac{z}{2m}\log2\pi\int_{U'}^{V'}e^{-\frac{z}{2}(t+\frac{4mX^2}{z^2t})}dt\\
&=\frac{z^2}{2m^2}\{K_2(z,U')-K_2(z,V')\}-\frac{z}{m}\log2\pi\{K_1(z,U')-K_1(z,V')\}
\end{align*}
Hence,
\begin{align}\label{(3.16) R-S}
\int_U^V &y^{-(m+1)}e^{-\frac{X^2}{\log y}}\log\frac{y}{2\pi}dy\\
&=\frac{z^2}{2m^2}\{K_2(z,U')-K_2(z,V')\}-\frac{z}{m}\log2\pi\{K_1(z,U')-K_1(z,V')\}\nonumber
\end{align}
Also if we let $y=\exp(X^2/t)$, we get
\begin{align}
\int_U^V y^{-1}e^{-\frac{X^2}{\log y}}\log\frac{y}{2\pi}dy=&\int_{U''}^{V''} e^{-\frac{X^2}{t}}e^{-t}\{\frac{X^2}{t}-\log2\pi\}\left(-\frac{X^2}{t^2}e^\frac{X^2}{t}\right)dt\nonumber\\
=&-X^4\int_{U''}^{V''} t^{-3}e^{-t}dt+X^2\log2\pi\int_{U''}^{V''} t^{-2}e^{-t}dt\nonumber\\
=&X^4\{\Gamma(-2,V'')-\Gamma(-2,U'')\}\nonumber\\
&-X^2\log2\pi\{\Gamma(-1,V'')-\Gamma(-1,U'')\},\label{(3.17) R-S}\\\nonumber
\end{align}
where $U''=X^2/\log U$, $V''=X^2/\log V$ and
$$
\Gamma(a,x)=\int_x^\infty t^{a-1}e^{-t}dt
$$
is incomplete gamma function.
\begin{thm}\label{bound for psi x-x}
If $\log x>110$, then
$$
|\psi(x)-x|<x\varepsilon(x),
$$
$$
|\vartheta(x)-x|<x\varepsilon(x),
$$
where
\begin{equation}\label{(3.19) R-S}
\varepsilon(x)=1.062253\left(1-\frac{0.900377}{2X}\right)X^{3/4}e^{-X},
\end{equation}
and
$$
X=\sqrt{\frac{\log x}{R_0}},\qquad R_0=5.69693.
$$
\end{thm}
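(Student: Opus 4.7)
The plan is to start from the explicit inequality \eqref{(3.11) R-S} and estimate each of the four zero-sums $S_1, S_2, S_3, S_4$ separately, exploiting both Gourdon's verification of the Riemann Hypothesis to height $A$ and Kadiri's zero-free region. The key preliminary observation is that if $T_2\leq A$ is chosen, then every non-trivial zero $\rho=\beta+i\gamma$ with $0<\gamma\leq T_2$ satisfies $\beta=1/2$, so $S_3(m,\delta)\equiv 0$; only $S_1$, $S_2$, $S_4$ and the term $m\delta/2$ remain. I will pick a small positive integer $m$ and take $\delta$ of order $X^{3/4}e^{-X}/m$, so that the parameter term $m\delta/2$ contributes at the target order.

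For $S_1(m,\delta)$, summing over the verified critical-line zeros with $\gamma\leq T_1$ gives $1/|\rho|\leq 1/\gamma$, and Lemma~\ref{lemma 8 of Rosser} (together with Proposition~\ref{sum 1/gamma2,3,4,5,6,7} for the tail, via partial summation) produces a bound that is dominated once the $1/\sqrt{x}$ factor in \eqref{(3.11) R-S} is applied, provided $\log x>110$. For $S_2(m,\delta)$ the inequality $1/|\rho(\rho+1)\cdots(\rho+m)|\leq \gamma^{-(m+1)}$ combined with Proposition~\ref{sum 1/gamma2,3,4,5,6,7} at the appropriate exponent gives a similarly controlled tail contribution.

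The main part of the argument is the estimation of $S_4(m,\delta)$. Since $\gamma>T_2$, Kadiri's theorem gives $\beta<1-1/(R_0\log\gamma)$, i.e.
\[
\exp\!\Bigl(-\tfrac{X^2}{\log\gamma}\Bigr)\geq x^{\beta-1}\qquad \bigl(X^2=\log x/R_0\bigr),
\]
and $1/|\rho(\rho+1)\cdots(\rho+m)|\leq \gamma^{-(m+1)}$. Applying Lemma~\ref{lemma 7 R-S} with its Corollary~\ref{lemma 7, corollary R-S} to $\Phi(y)=y^{-(m+1)}\exp(-X^2/\log y)$ reduces $S_4$ to an integral of the form \eqref{(3.16) R-S}. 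This integral is then written through the Bessel-type functions $K_1(z,U')$ and $K_2(z,U')$ with $z=2X\sqrt{m}$ and $U'=(2m/z)\log T_2$, and the explicit upper bounds \eqref{(2.30) R-S}--\eqref{(2.31) R-S} produce a quantity of the shape $C\,X^{3/4}e^{-X}$ with an explicitly computable constant $C$.

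Combining all pieces, optimizing over $m$ and $\delta$, and checking numerically for $\log x>110$ that the residual contributions fit within the stated constants yields \eqref{(3.19) R-S}. The bound for $|\vartheta(x)-x|$ then follows from the $\psi$ bound and the elementary inequality $0\leq \psi(x)-\vartheta(x)\leq \frac{\log^2 x}{2\log 2}\sqrt{x}$, since for $\log x>110$ this difference is much smaller than $x\varepsilon(x)$. The main obstacle will be the careful numerical balancing required to force the coefficients $1.062253$ and $0.900377$ to come out exactly as stated; this is the same kind of optimization carried out in the original Rosser--Schoenfeld paper, now updated with Kadiri's zero-free region and Gourdon's RH verification.
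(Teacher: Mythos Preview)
Your proposal is correct and follows essentially the same route as the paper: the paper makes the concrete choices $m=1$ and $T_1=T_2=0$ (so $S_3\equiv0$ and, by the RH verification to height $A$ together with the symmetry of off-line zeros, $S_3+S_4$ reduces to $\tfrac{2+2\delta+\delta^2}{\delta}\sum_{\gamma>A}\phi_1(\gamma)$), bounds this sum via Corollary~\ref{lemma 7, corollary R-S} and the \emph{full} Bessel estimates \eqref{K1(z)}--\eqref{K2(z)} rather than the incomplete ones, and then picks $\delta$ explicitly to balance the $X^{3/2}e^{-2X}/\delta$ term against $\delta/2$, producing the constant $1.062253$. For $\vartheta$ the paper invokes the sharper $|\psi(x)-\vartheta(x)|<1.43\sqrt{x}$ from \cite{Rosser-1962} instead of your elementary bound, but either is easily absorbed for $\log x>110$.
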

\begin{proof}
Take $m=1$ and $T_1=T_2=0$ in (\ref{(3.6) R-S}) through (\ref{(3.11) R-S}). By Lemma \ref{lemma 8 of Rosser},
$$
S_1(1,\delta)+S_2(1,\delta)<(0.0463)\frac{2+2\delta+\delta^2}{\delta}.
$$
Also, as $\beta=\frac12$ for $|\gamma|\leq A$, and the zeros off the critical line occur in pairs which are symmetrical with respect to this line, we have
$$
S_3(1,\delta)+S_4(1,\delta)\leq\frac{2+2\delta+\delta^2}{\delta}\sum_{\gamma>A}\phi_1(\gamma),
$$
where
$$
\phi_m(y)=\frac{e^{-X^2/\log y}}{y^{m+1}}.
$$
We appeal to Corollary \ref{lemma 7, corollary R-S} with $\Phi(y)=\phi_1(y)$, $j=0$, $U=A$, $V=\infty$, and $W=W_1$, where for $m>-1$
$$
W_m=\exp(X/\sqrt{m+1}).
$$
Not that $q(Y)\leq q(A)$. Also, as $N(A)=F(A)$, we have
\begin{equation}\label{(3.25) R-S}
E_0=2R(Y)\phi_1(Y)-R(A)\phi_1(A).
\end{equation}
Since $K_\nu(z,x)+K_{-\nu}(z,1/x)=K_\nu(z)$ and (\ref{(3.16) R-S}), we have
$$
\int_A^\infty\phi_1(y)\log\frac{y}{2\pi}dy\leq2X\{XK_2(2X)-\log(2\pi)K_1(2X)\}\leq2X^2K_2(2X).
$$
Then, by (\ref{K1(z)}) and (\ref{K2(z)}), we conclude
\begin{align}\label{(3.26) R-S}
\sum_{\gamma>A}\phi_1(\gamma)\leq&\{\frac{1}{2\pi}+\frac{0.137\log A+0.443}{A\log A\log(A/2\pi)}\}\int_A^\infty\phi_1(y)\log\frac{y}{2\pi}dy+E_0\nonumber\\
<&\{\frac{1}{2\pi}+\frac{0.137\log A+0.443}{A\log A\log(A/2\pi)}\}(2X^2)\{K_2(2X)\}+E_0\nonumber\\
\leq&\{\frac{1}{2\pi}+\frac{0.137\log A+0.443}{A\log A\log(A/2\pi)}\}(2X^2)\{1+\frac{15}{16X}+\frac{105}{512X^2}\}\sqrt{\frac{\pi}{4X}}e^{-2X}+E_0\nonumber\\
<&(0.28209479177389)\{1+\frac{15}{16X}+\frac{105}{512X^2}\}X^{3/2}e^{-2X}+E_0.
\end{align}
If $W_1\leq A$, then $Y=A$. Then by (\ref{(3.25) R-S})
$$
E_0=R(A)\phi_1(A)=\frac{R(A)}{A^2}\{e^{-X^2/\log A}X^{1/2}e^{2X}\}X^{-1/2}e^{-2X}.
$$
As the expression
$$
\exp\{-\frac{X^2}{\log A}+\frac12\log X+2X\}
$$
takes its maximum at
$$
X=\frac12\log A+\frac12\sqrt{\log^2A+\log A}
$$
we conclude that

\begin{equation}\label{(3.27) R-S}
E_0<1.53\cdot10^{-11}X^{-1/2}e^{-2X}.
\end{equation}

If $W_1>A$, then $Y=W_1$ and $X>40$. As $R(y)/\log y$ is decreasing for $y>e^e$, (\ref{(3.25) R-S}) gives
\begin{align*}
E_0<2R(Y)\phi_1(Y)=&2\frac{R(Y)}{\log Y}\phi_1(Y)\log Y\\
                  <&2\frac{R(A)}{\log A}\phi_1(W_1)\log W_1\\
                  =&2\frac{R(A)}{\log A}\frac{X}{\sqrt{2}}e^{-2\sqrt{2}X}\\
                  =&\sqrt{2}\frac{R(A)}{\log A}Xe^{-2\sqrt{2}X}\\
                  =&\sqrt{2}\frac{R(A)}{\log A}X^{3/2}e^{2X-2\sqrt{2}X}X^{-1/2}e^{-2X}\\
                  <&(3.56\cdot10^{-13})X^{-1/2}e^{-2X}.
\end{align*}
so that we conclude (\ref{(3.27) R-S}) for this case also. Then by (\ref{(3.26) R-S})
\begin{equation}\label{(3.28) R-S}
\sum_{\gamma>A}\phi_1(\gamma)<(0.282094791774)\{1+\frac{15}{16X}+\frac{105}{512X^2}\}X^{3/2}e^{-2X}.
\end{equation}
As $\log x\geq110$
\begin{equation}\label{(3.29) R-S}
\frac{0.0463}{\sqrt{x}}=(0.0463)e^{-\frac12R_0X^2}<10^{-21}X^{-1/2}e^{-2X}.
\end{equation}
Choose
\begin{equation}\label{(3.30) R-S}
\delta=2(0.282094791775)^{1/2}\{1+\frac{15}{32X}\}X^{3/4}e^{-X}.
\end{equation}
So
\begin{align*}
&\frac{S_1(1,\delta)+S_2(1,\delta)}{\sqrt{x}}+S_3(1,\delta)+S_4(1,\delta)<\frac{2+2\delta+\delta^2}{\delta}\left(\frac{0.0463}{\sqrt{x}}+\sum_{\gamma>A}\phi_1(\gamma)\right)\\
&<\frac{2+2\delta+\delta^2}{\delta}\left(10^{-21}X^{-1/2}e^{-2X}+(0.282094791774)\{1+\frac{15}{16X}+\frac{105}{512X^2} \}X^{3/2}e^{-2X}\right)\\
&<\frac{2+2\delta+\delta^2}{\delta}(0.282094791775)\{1+\frac{15}{16X}+\frac{105}{512X^2} \}X^{3/2}e^{-2X}\\
&<\frac2\delta(0.282094791775)\left(1+\frac{15}{32X}\right)^2X^{3/2}e^{-2X}.\\
\end{align*}
Let
$$
\varepsilon_1(x)=\frac{S_1(1,\delta)+S_2(1,\delta)}{\sqrt{x}}+S_3(1,\delta)+S_4(1,\delta)+\frac\delta2.
$$
Then
\begin{align*}
\varepsilon_1(x)<&\frac2\delta(0.282094791775)\left(1+\frac{15}{32X}\right)^2X^{3/2}e^{-2X}+\frac\delta2\\
=&(0.282094791775)^{1/2}\{1+\frac{15}{32X}\}X^{3/4}e^{-X}+(0.282094791775)^{1/2}\{1+\frac{15}{32X}\}X^{3/4}e^{-X}\\
=&2(0.282094791775)^{1/2}\{1+\frac{15}{32X}\}X^{3/4}e^{-X}\\
<&(1.06225193203)\{1+\frac{15}{32X}\}X^{3/4}e^{-X}.
\end{align*}
So
$$
|\psi(x)-\{x-\log2\pi-\frac12\log(1-\frac{1}{x^2})\}|<x\varepsilon(x),
$$
where
$$
\varepsilon(x)=(1.06225193203)\{1+\frac{15}{32X}\}X^{3/4}e^{-X}.
$$
By Theorem 13 of \cite{Rosser-1962},
$$
|\psi(x)-\vartheta(x)|<1.43\sqrt{x}
$$
Thus, it would appear that for $\vartheta(x)$ we should increase $\varepsilon(x)$ by $1.43/\sqrt{x}$. However,
we can treat it as in (\ref{(3.29) R-S}) to show it is absorbed when we round up some of the coefficients.
\end{proof}
\begin{thm}\label{theorem 3 R-S}
If $\log x\geq110$
$$
|\psi(x)-x|<x\varepsilon^\ast(x),
$$
where
$$
\varepsilon^\ast(x)=\frac{\varepsilon(x)}{\sqrt{2}}\left\{1+\frac{3\log X}{2\sqrt{\pi(4X-3\log X)}}+\frac{3}{2\sqrt{\pi X}}\right\}
$$
and
$$
r(x)=1+\frac{15}{32X}.
$$
\end{thm}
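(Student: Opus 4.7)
The plan is to push the Rosser--Schoenfeld argument of Theorem \ref{bound for psi x-x} one step further by retaining a positive truncation parameter $U'$ inside the Bessel-type integrals, rather than passing to the boundary values (\ref{K1(z)})--(\ref{K2(z)}). In the previous proof the inequality $K_\nu(z,U')\leq K_\nu(z)$ was invoked, effectively sending $y=(\sqrt{U'}-1/\sqrt{U'})/\sqrt{2}$ to $-\infty$ and so discarding the Gaussian factor $e^{-zy^2}$ present in (\ref{(2.30) R-S})--(\ref{(2.31) R-S}). Keeping $U'>0$ (the natural choice being $U'=(\log A)/X$, which comes out of the change of variables leading to (\ref{(3.16) R-S})) lowers the leading constant on the tail $\sum_{\gamma>A}\phi_1(\gamma)$ by roughly a factor of $2$, and after re-optimizing $\delta$ this appears as the prefactor $1/\sqrt{2}$ of $\varepsilon^*(x)$.

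Concretely, I would take $m=1$, $T_1=0$, $T_2=A$ in (\ref{(3.6) R-S})--(\ref{(3.11) R-S}), leaving the bound on $S_1+S_2$ from Lemma \ref{lemma 8 of Rosser} unchanged. For $S_3+S_4$ I apply Corollary \ref{lemma 7, corollary R-S} to $\Phi=\phi_1$ on $(A,\infty)$, then use (\ref{(3.16) R-S}) with $z=2X$ to rewrite
$$
\int_A^\infty \phi_1(y)\log(y/2\pi)\,dy=2X^2K_2(2X,U')-2X\log(2\pi)\,K_1(2X,U'),
$$
and insert the refined bounds (\ref{(2.30) R-S})--(\ref{(2.31) R-S}) at $y=(\sqrt{U'}-1/\sqrt{U'})/\sqrt{2}$. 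The residual Gaussian integral is controlled by the elementary inequality $\int_y^\infty e^{-zw^2}\,dw\leq\tfrac12\sqrt{\pi/z}\,e^{-zy^2}$. Combining these with the coefficients $(3/8+z)\sqrt{2}$ of (\ref{(2.30) R-S}) and $(105/(128z)+15/8)\sqrt{2}$ of (\ref{(2.31) R-S}) produces the two subleading corrections $3\log X/(2\sqrt{\pi(4X-3\log X)})$ and $3/(2\sqrt{\pi X})$ inside the braces of $\varepsilon^*(x)$, while the $15/(16X)$ and $105/(512X^2)$ pieces from the expansion of $K_2(z)$ repackage into $r(x)=1+15/(32X)$ as in the previous theorem. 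The optimal $\delta$, obtained by minimizing $\frac{2+2\delta+\delta^2}{\delta}\cdot(\text{tail})+\delta/2$, is a factor $\sqrt{2}$ smaller than the choice (\ref{(3.30) R-S}), delivering the prefactor $\varepsilon(x)/\sqrt{2}$.

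The error term $E_0$ from (\ref{(3.25) R-S}) and the contribution $0.0463/\sqrt{x}$ from $S_1+S_2$ are absorbed exactly as in (\ref{(3.27) R-S})--(\ref{(3.29) R-S}): both are negligible compared to $X^{-1/2}e^{-2X}$ under the hypothesis $\log x\geq 110$ and vanish into a rounding-up of the leading constant. The main technical obstacle is the careful bookkeeping of the two competing Gaussian contributions in (\ref{(2.30) R-S})--(\ref{(2.31) R-S})---the polynomial-times-$e^{-zy^2}$ piece and the complementary error tail---and verifying that at the fixed truncation $U'=(\log A)/X$ they combine exactly into $3\log X/(2\sqrt{\pi(4X-3\log X)})+3/(2\sqrt{\pi X})$ with no further residue. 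One also has to check that $4X-3\log X>0$ on the whole range $\log x\geq 110$; at the endpoint $X=\sqrt{110/R_0}\approx 4.39$ this quantity already exceeds $13$, and it is increasing for $X>3/4$, so the square root in $\varepsilon^*(x)$ is well defined throughout.
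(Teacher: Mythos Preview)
Your identification of the mechanism is off. The improvement does not come from keeping $U'=(\log A)/X$ in the Bessel bounds (\ref{(2.30) R-S})--(\ref{(2.31) R-S}); it comes from introducing a \emph{second} truncation $T_2=X^{-3/4}e^{X}\gg A$ and splitting the tail sum at $T_2$. With $T_2=A$ as you propose, nothing new happens: all zeros with $\beta>1/2$ already satisfy $\gamma>A$, so $S_3=0$ and $S_4$ is exactly the sum handled in Theorem~\ref{bound for psi x-x}. Moreover, in the interesting range $X>33$ (for smaller $X$ one has $\varepsilon^\ast(x)>\varepsilon(x)$ and there is nothing to prove) your $U'=(\log A)/X<1$, hence $y<0$, and your Gaussian bound $\int_y^\infty e^{-zw^2}\,dw\le\tfrac12\sqrt{\pi/z}\,e^{-zy^2}$ is false there.

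In the paper the split sends zeros with $A<\gamma\le T_2$ into $S_3$, which carries the prefactor $(2+\delta)/2$ (no $1/\delta$) and is bounded via the incomplete-gamma identity (\ref{(3.17) R-S}), giving $S_3<\frac{2+\delta}{4\pi}X^{1/4}e^{-X}$, negligible against $\delta$. Only $S_4$ over $\gamma>T_2$ keeps the $1/\delta$ factor, and it is here that (\ref{(2.30) R-S})--(\ref{(2.31) R-S}) are applied with $U'=(\log T_2)/X=1-\tfrac{3\log X}{4X}$. Writing $q=\tfrac{3\log X}{4X}$, the resulting correction $\frac{q\sqrt X}{\sqrt{\pi(1-q)}}$ is precisely $\frac{3\log X}{2\sqrt{\pi(4X-3\log X)}}$; the quantity $4X-3\log X=4\log T_2$ has nothing to do with $A$. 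The factor $1/\sqrt2$ arises because, with $U'$ just below $1$, the Gaussian integral in (\ref{(3.46) R-S}) runs essentially from $0$ rather than from $-\infty$, halving the leading coefficient of $\int_{T_2}^\infty\phi_1(y)\log(y/2\pi)\,dy$ relative to $\int_A^\infty$; the re-optimized $\delta$ is (\ref{(3.35) R-S}).
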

\begin{proof}
Take
\begin{equation}\label{(3.35) R-S}
\delta=\frac{1}{\sqrt{2}}(1.06225193203)\{1+\frac{15}{32X}\}X^{3/4}e^{-X}.
\end{equation}
We may assume $X\geq33.4$ or (more accurate $x\geq e^{6344}$), since if $X\leq33.36$ or (more accurate $x\leq e^{6343}$), $\varepsilon^\ast(x)>\varepsilon(x)$. We take $m=1$, $T_1=0$, and

\begin{equation}\label{(3.36) R-S}
T_2=X^{-3/4}e^X.
\end{equation}

As $X>32$ we have $A<T_2<e^X=W_0$ and $W_1<T_2$.

We can treat $\{S_1(1,\delta)+S_2(1,\delta)\}/\sqrt{x}$ and the error terms $E_j(U,V)$ arising from the use of Corollary \ref{lemma 7, corollary R-S}, as we did in the proof of the previous theorem. Thus we can proceed as though

\begin{equation}\label{(3.37) R-S}
S_3(1,\delta)=\frac{2+\delta}{2}\sum_{0<\gamma\leq T_2}\phi_0(\gamma)<\frac{2+\delta}{2}\left(\frac{1}{2\pi}-q(T_2)\right)\int_A^{T_2}\phi_0(y)\log\frac{y}{2\pi}dy.
\end{equation}
If $\nu\leq1$ and $x>0$, then
$$
\Gamma(\nu,x)=\int_x^\infty t^{\nu-1}e^{-t}dt\leq x^{\nu-1}\int_x^\infty e^{-t}dt=x^{\nu-1}e^{-x}
$$
and
$$
\Gamma(\nu,x)=\int_x^\infty t^{\nu-1}e^{-t}dt\geq (1+x)^{\nu-1}e^{-x}.
$$
Hence, by (\ref{(3.17) R-S}), we have in effect
\begin{align}\label{(3.38) R-S}
S_3(1,\delta)<&\frac{2+\delta}{4\pi}\left\{X^4\{\Gamma(-2,V'')-\Gamma(-2,U'')\}
-X^2\log2\pi\{\Gamma(-1,V'')-\Gamma(-1,U'')\}\right\}\nonumber\\
\leq&\frac{2+\delta}{4\pi}\left\{X^4\Gamma(-2,V'')-X^2\log2\pi\Gamma(-1,V'')\right\}\nonumber\\
<&\frac{2+\delta}{4\pi}\left\{X^4(V'')^{-3}-X^2\log2\pi(1+V'')^{-2}\right\}e^{-V''},
\end{align}
where
\begin{equation}\label{(3.39) R-S}
V''=\frac{X^2}{\log T_2}=\frac{4X^2}{4X-3\log X}.
\end{equation}
Then
\begin{equation}\label{(3.40) R-S}
V''>X+\frac34\log X,\qquad e^{-V''}<X^{-3/4}e^{-X}.
\end{equation}
Also
\begin{align*}
X^4(V'')^{-3}-X^2\log2\pi(V'')^{-2}=&\left(1-\frac{3\log X}{4X}\right)^2\\
&\left(X-\frac{3}{4}\log X-\frac{\log2\pi}{(1+1/X-3\log(X)/(4X^2))^2}\right)<X.
\end{align*}
So, effectively

\begin{equation}\label{(3.41) R-S}
S_3(1,\delta)<\frac{2+\delta}{4\pi}X^{1/4}e^{-X}.
\end{equation}

Similarly, we can proceed as though

\begin{equation}\label{(3.42) R-S}
S_4(1,\delta)=\frac{2+2\delta+\delta^2}{\delta}\sum_{\gamma> T_2}\phi_1(\gamma)<\frac{2+2\delta+\delta^2}{\delta}\left(\frac{1}{2\pi}+q(T_2)\right)\int_{T_2}^\infty\phi_1(y)\log\frac{y}{2\pi}dy.
\end{equation}

By (\ref{(3.16) R-S})
\begin{equation}\label{(3.43) R-S}
\int_{T_2}^\infty\phi_1(y)\log\frac{y}{2\pi}dy=2\{X^2K_2(2X,U')-X\log(2\pi)K_1(2X,U')\}\leq2X^2K_2(2X,U'),
\end{equation}
where
\begin{equation}\label{(3.44) R-S}
U'=\frac1X \log T_2=1-\frac{3\log X}{4X}.
\end{equation}
Write temporarily
\begin{equation}\label{(3.45) R-S}
q=\frac{3\log X}{4X},\qquad y=\frac{1}{\sqrt{2}}\left(\sqrt{U'}-\frac{1}{\sqrt{U'}}\right).
\end{equation}
Then $y$ is negative, and
$$
y^2=\frac{1}{2}\left(U'+\frac{1}{U'}\right)-1=\frac{q^2}{2(1-q)}.
$$
So, by splitting the integral in Lemma \ref{upper bound of K1(z,x), K2(z,x)} at $w=0$, we get
\begin{align}\label{(3.46) R-S}
\sqrt{2}\int_y^\infty e^{-2Xw^2}dw=&\sqrt{2}\int_y^0 e^{-2Xw^2}dw+\sqrt{2}\int_0^\infty e^{-2Xw^2}dw\nonumber \\
=& \sqrt{2}\int_y^0 e^{-2Xw^2}dw+\frac{\sqrt{\pi}}{2\sqrt{X}}=\sqrt{2}\int_0^y e^{-2Xw^2}dw+\frac{\sqrt{\pi}}{2\sqrt{X}}\nonumber\\
<& \sqrt{2}\int_0^y 1 dw+\frac{\sqrt{\pi}}{2\sqrt{X}}=\frac{q}{\sqrt{1-q}}+\frac{\sqrt{\pi}}{2\sqrt{X}}.
\end{align}
Hence, by (\ref{(2.30) R-S}) we get
\begin{align}\label{(3.47) R-S}
X\log(2\pi)&K_1(2X,U')\nonumber\\
&<X\log(2\pi)\frac{e^{-2X}}{4X}\left\{\left(1+\frac{3\sqrt{2}}{8}y\right)e^{-2Xy^2}+\left(\frac38+2X\right)\left(\frac{\sqrt{\pi}}{2\sqrt{X}}+\frac{q}{\sqrt{1-q}}\right)\right\}\nonumber\\
&\leq\frac14\log(2\pi)e^{-2X}\left\{1+\left(\frac38+2X\right)\left(\frac{\sqrt{\pi}}{2\sqrt{X}}+\frac{q}{\sqrt{1-q}}\right)\right\}\nonumber\\
&=\frac{\sqrt{\pi}}{4}\log(2\pi)X^{3/2}e^{-2X}\left\{\frac{1}{\sqrt{\pi}X^{3/2}}+\left(\frac{3}{16X^2}+\frac1X\right)\left(1+\frac{2q\sqrt{X}}{\sqrt{\pi(1-q)}}\right)\right\}
\end{align}
As $1+zy^2<e^{zy^2}$, we have $(2y^2+2/z)e^{-zy^2}<2/z=1/X$. Hence, by (\ref{(2.31) R-S}) we get
\begin{align}\label{(3.48) R-S}
X^2&K_2(2X,U')\nonumber\\
<&X^2\frac{e^{-2X}}{4X}{\Big\{}\left[\frac{35\sqrt{2}}{64}y^3+2y^2+\left(\frac{105}{128z}+\frac{15}{8}\right)\sqrt{2}y+2+\frac2z\right]e^{-zy^2}\nonumber\\
 &+\left(\frac{105}{256X}+\frac{15}{8}+2X\right)\left(\frac{\sqrt{\pi}}{2\sqrt{X}}+\frac{q}{\sqrt{1-q}}\right){\Big\}}\nonumber\\
<&\frac14Xe^{-2X}\left\{2+\frac1X+\left(\frac{105}{256X}+\frac{15}{8}+2X\right)\left(\frac{\sqrt{\pi}}{2\sqrt{X}}+\frac{q}{\sqrt{1-q}}\right)\right\}\nonumber\\
=&\frac14\sqrt{\pi}X^{3/2}e^{-2X}\left\{\frac{2}{\sqrt{\pi X}}+\frac{1}{\sqrt{\pi}X^{3/2}}+\left(\frac{105}{512X^2}+\frac{15}{16X}+1\right)\left(1+\frac{2q\sqrt{X}}{\sqrt{\pi(1-q)}}\right)\right\}.
\end{align}
Combining with (\ref{(3.43) R-S}) and (\ref{(3.47) R-S}) gives
$$
\int_{T_2}^\infty \phi_1(y)\log\frac{y}{2\pi}dy<\frac{\sqrt{\pi}}{2}X^{3/2}e^{-2X}Q_1,
$$
where
\begin{align*}
Q_1=\frac{2}{\sqrt{\pi X}}+\frac{1}{\sqrt{\pi}X^{3/2}}+\left(\frac{105}{512X^2}+\frac{15}{16X}+1\right)\left(1+\frac{2q\sqrt{X}}{\sqrt{\pi(1-q)}}\right).
\end{align*}
So finally by (\ref{(3.42) R-S}) and (\ref{(3.35) R-S})
\begin{align}\label{(3.49) R-S}
S_4(1,\delta)<&\frac{2+2\delta+\delta^2}{\delta}\left(\frac{1}{2\pi}+q(T_2)\right)\int_{T_2}^\infty\phi_1(y)\log\frac{y}{2\pi}dy\nonumber\\
             <&\frac{2+2\delta+\delta^2}{\delta}\left(\frac{1}{2\pi}+q(T_2)\right)\frac{\sqrt{\pi}}{2}X^{3/2}e^{-2X}Q_1\nonumber\\
             =&\frac{2+2\delta+\delta^2}{\delta}\left(\frac{1}{2\pi}+q(T_2)\right)\frac{\sqrt{\pi}}{2}X^{3/2}e^{-2X}2r(x)^2\nonumber\\
             &\times{\Big\{}\frac{1}{r(x)^2}\left(\frac{1}{\sqrt{\pi X}}+\frac{1}{2\sqrt{\pi}X^{3/2}}\right)\nonumber\\
             &\quad+\frac{1}{2r(x)^2}\left(\frac{105}{512X^2}+\frac{15}{16X}+1\right)\left(1+\frac{2q\sqrt{X}}{\sqrt{\pi(1-q)}}\right){\Big\}}\nonumber\\
             <&2\frac{r(x)^2}{\delta}\left(\frac{1}{2\pi}+q(T_2)\right)\frac{\sqrt{\pi}}{2}X^{3/2}e^{-2X}\nonumber\\
             &\times\left\{\frac{2+2\delta+\delta^2}{r(x)^2}\left(\frac{1}{\sqrt{\pi X}}+\frac{1}{2\sqrt{\pi}X^{3/2}}\right)+\left(1+\frac{2q\sqrt{X}}{\sqrt{\pi(1-q)}}\right)\right\}\nonumber\\
             =&2\frac{r(x)^2}{\delta}\left(\frac{1}{2\pi}+q(T_2)\right)\frac{\sqrt{\pi}}{2}X^{3/2}e^{-2X}Q_2\nonumber\\
             =&2\frac{r(x)^2}{\delta^2}\delta\left(\frac{1}{2\pi}+q(T_2)\right)\frac{\sqrt{\pi}}{2}X^{3/2}e^{-2X}Q_2\nonumber\\
             <&(\frac12)\delta Q_2,\qquad(X\geq28),
\end{align}
where

\begin{equation}\label{(3.50) R-S}
Q_2=1+\frac{2q\sqrt{X}}{\sqrt{\pi(1-q)}}+\frac{2+2\delta+\delta^2}{r(x)^2}\left(\frac{1}{\sqrt{\pi X}}+\frac{1}{2\sqrt{\pi}X^{3/2}}\right).
\end{equation}
So
\begin{align*}
\frac{S_1(1,\delta)+S_2(1,\delta)}{\sqrt{x}}&+S_3(1,\delta)+S_4(1,\delta)+\frac\delta2\\
<&\frac{2+2\delta+\delta^2}{\delta}\frac{0.0463}{\sqrt{x}}+\frac{2+\delta}{4\pi}X^{1/4}e^{-X}\\
&+\frac\delta2\left\{1+\frac{2q\sqrt{X}}{\sqrt{\pi(1-q)}}+\frac{2+2\delta+\delta^2}{r(x)^2}\left(\frac{1}{\sqrt{\pi X}}+\frac{1}{2\sqrt{\pi}X^{3/2}}\right)\right\}+\frac\delta2\\
<&\frac\delta2\left\{1+\frac{2q\sqrt{X}}{\sqrt{\pi(1-q)}}+\frac{2+2\delta+\delta^2}{r(x)^2}\left(\frac{1}{\sqrt{\pi X}}+\frac{1}{2\sqrt{\pi X}}\right)\right\}+\frac\delta2,\qquad(X\geq6)\\
=&\delta\left\{1+\frac{q\sqrt{X}}{\sqrt{\pi(1-q)}}+\frac{2+2\delta+\delta^2}{2r(x)^2}\left(\frac{1}{\sqrt{\pi}}+\frac{1}{2\sqrt{\pi}}\right)\frac{1}{\sqrt{X}}\right\}\\
<&\delta\left\{1+\frac{q\sqrt{X}}{\sqrt{\pi(1-q)}}+\frac{3}{2\sqrt{\pi X}}\right\}\\
=&\frac{\varepsilon(x)}{\sqrt{2}}\left\{1+\frac{q\sqrt{X}}{\sqrt{\pi(1-q)}}+\frac{3}{2\sqrt{\pi X}}\right\}\\
=&\frac{\varepsilon(x)}{\sqrt{2}}\left\{1+\frac{3\log X}{2\sqrt{\pi(4X-3\log X)}}+\frac{3}{2\sqrt{\pi X}}\right\}.
\end{align*}
\end{proof}
\subsection{Numerical Bounds for $\psi(x)-x$ for Moderate Values of $x$}
In our main table (at the end of the thesis) we tabulate values of $\varepsilon$ against $b$. These have been determined so that if $x\geq e^b$, then

\begin{equation}\label{(4.1) R-S}
|\psi(x)-x|<\varepsilon x.
\end{equation}

Let $T_2=0$ and

\begin{equation}\label{(4.2) R-S}
T_1=\frac1\delta\left(\frac{2R_m(\delta)}{2+m\delta}\right)^{1/m}.
\end{equation}
We chose also

\begin{equation}\label{(4.3) R-S}
D=963.5670402.
\end{equation}

The zeros for which $0<\gamma\leq D$ are exactly 620 in number.

\begin{equation}\label{(4.4) R-S}
S\equiv\sum_{0<\gamma\leq D}\frac{1}{|\rho|}=\sum_{0<\gamma\leq D}\frac{1}{(\gamma^2+1/4)^{1/2}}<2.
\end{equation}

\begin{lem}
With $T_1$ and $D$ given by (\ref{(4.2) R-S}) and (\ref{(4.3) R-S}), if $T_1\geq D$, $\delta>0$, and $m$ is a positive integer, then

\begin{equation}\label{(4.5) R-S}
S_1(m,\delta)+S_2(m,\delta)<\frac{2+m\delta}{4\pi}\left\{\left(\log\frac{T_1}{2\pi}+\frac1m\right)^2+\frac{1}{m^2}-0.1580304-2.531837599\frac{m}{(m+1)T_1}\right\}.
\end{equation}
\end{lem}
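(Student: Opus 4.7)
The plan is to collapse $S_1+S_2$ into a single factor $(2+m\delta)$ using the defining identity (\ref{(4.2) R-S}), reduce the resulting expression to sums over $\gamma$ alone, split at $\gamma=D$, and apply Lemma \ref{lemma 7 R-S} to each tail.

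First I would rewrite (\ref{(4.2) R-S}) as $(2+m\delta)T_1^m=2R_m(\delta)/\delta^m$ and factor
$$S_1(m,\delta)+S_2(m,\delta)=(2+m\delta)\,\Sigma,\qquad \Sigma=\sum_{\substack{\beta\le 1/2\\ 0<\gamma\le T_1}}\frac{1}{|\rho|}+T_1^m\sum_{\substack{\beta\le 1/2\\ \gamma>T_1}}\frac{1}{|\rho(\rho+1)\cdots(\rho+m)|}.$$
Using $|\rho+k|\ge\gamma$ for $k=0,\ldots,m$, dropping the constraint $\beta\le 1/2$ (which only enlarges the upper bound), splitting the first sum at $D$, and invoking (\ref{(4.4) R-S}) on $[0,D]$ reduces matters to
$$\Sigma\le S+\sum_{D<\gamma\le T_1}\frac{1}{\gamma}+T_1^m\sum_{\gamma>T_1}\frac{1}{\gamma^{m+1}},\qquad S<2.$$

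Next I would apply Lemma \ref{lemma 7 R-S} to each of the two remaining sums. In both cases $\Phi'<0$, so the hypothesis $(W-y)\Phi'(y)\ge 0$ forces $W\le U$; taking $W=U$ gives $Y=U$ and $j=0$. Computing the integrals (using (\ref{(3.15) R-S}) for the second and elementary antiderivatives otherwise) produces
$$\sum_{D<\gamma\le T_1}\frac{1}{\gamma}\le\frac{\log^2(T_1/2\pi)-\log^2(D/2\pi)}{4\pi}+\Bigl(0.137+\tfrac{0.443}{\log D}\Bigr)\!\Bigl(\tfrac{1}{D}-\tfrac{1}{T_1}\Bigr)+E_0(D,T_1),$$
$$T_1^m\sum_{\gamma>T_1}\frac{1}{\gamma^{m+1}}\le\frac{1}{2\pi m^2}+\frac{\log(T_1/2\pi)}{2\pi m}+\frac{0.137+0.443/\log T_1}{(m+1)T_1}+T_1^m E_0(T_1,\infty).$$

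Summing and multiplying through by $4\pi$, the $T_1$-growing pieces combine into $\log^2(T_1/2\pi)+(2/m)\log(T_1/2\pi)+2/m^2=\bigl(\log(T_1/2\pi)+1/m\bigr)^2+1/m^2$, matching the main term of the lemma exactly. The two $1/T_1$ contributions (one negative from $[D,T_1]$, one positive from $[T_1,\infty)$) combine, via $\log T_1\ge\log D$, into at most $-(0.137+0.443/\log D)\,m/[(m+1)T_1]$, and the numerical identity $4\pi(0.137+0.443/\log D)=2.531837599$ produces the stated coefficient. The remaining $T_1$-independent residue — namely $4\pi S-\log^2(D/2\pi)+4\pi(0.137+0.443/\log D)/D$ together with the $4\pi$-multiples of the error terms $E_0(D,T_1)$ and $T_1^mE_0(T_1,\infty)$ — has to be shown to be at most $-0.1580304$ using $S<2$, the known count $N(D)=620$, and the explicit formulas (\ref{F(T)}), (\ref{R(T)}) for $F$ and $R$ combined with Theorem \ref{N(T)-F(T) R(T)}.

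The main obstacle is this final numerical check. The lumped value $8\pi-\log^2(D/2\pi)$ is only about $-0.196$, so the slack down to $-0.1580304$ is a mere $\approx 0.038$; meanwhile $4\pi\,E_0$ is already of order $4\pi[R(D)+F(D)-N(D)]/D\approx 0.036$ at the extremal case $T_1=D$, so the error pieces nearly saturate the available margin. Exploiting $N(T)-F(T)\ge -R(T)$ to produce partial cancellation in $E_j$ between the $2R(Y)\Phi(Y)$ term and the subtracted $[N(U)-F(U)+R(U)]\Phi(U)$ term is what keeps the argument from failing, and this tight numerical calibration is exactly what pins down the specific constants $D=963.5670402$ and the count $620$ appearing in (\ref{(4.3) R-S}), (\ref{(4.4) R-S}).
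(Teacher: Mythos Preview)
Your argument is correct and matches the paper's proof step for step: factor out $(2+m\delta)$ via (\ref{(4.2) R-S}), split at $D$ using (\ref{(4.4) R-S}), apply Lemma~\ref{lemma 7 R-S} with $j=0$ to each tail, and recombine. The ``main obstacle'' you flag is not one: the $T_1$-dependent pieces of $E_0(D,T_1)$ and $T_1^{m}E_0(T_1,\infty)$ cancel \emph{exactly}, since the first contributes $\{N(T_1)-F(T_1)-R(T_1)\}/T_1$ and the second $\{R(T_1)+F(T_1)-N(T_1)\}/T_1$, leaving only the constant $\{R(D)+F(D)-N(D)\}/D$; plugging in $N(D)=620$ and the explicit $F(D),R(D)$ then yields $4\pi\bigl[S-\tfrac{1}{4\pi}\log^2\tfrac{D}{2\pi}+\tfrac{1}{D}(0.137+\tfrac{0.443}{\log D}+R(D)+F(D)-N(D))\bigr]<-0.1580304$ directly, with no need for Theorem~\ref{N(T)-F(T) R(T)}.
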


\begin{proof}
By (\ref{(3.7) R-S}) and (\ref{(4.4) R-S})
$$
S_1(m,\delta)=(2+m\delta)\sum_{\substack{\beta\leq1/2\\0<\gamma\leq T_1}}\frac1{|\rho|}<(2+m\delta)\left\{S+\sum_{\substack{D<\gamma\leq T_1}}\frac1{\gamma}\right\}.
$$
Taking $\Phi(y)=y^{-1}$, $j=0$, $U=D$, $V=T_1$, and $W=0$ in Lemma \ref{lemma 7 R-S} gives
\begin{align*}
\sum_{D<\gamma\leq T_1}\frac1{\gamma}\leq&\frac{1}{2\pi}\int_D^{T_1}\frac{1}{y}\log\frac{y}{2\pi}dy+\left\{0.137+\frac{0.443}{\log D}\right\}\int_D^{T_1}\frac{1}{y^2}dy+E_0\\
\leq&\frac{1}{4\pi}\left\{\log^2\frac{T_1}{2\pi}-\log^2\frac{D}{2\pi}\right\}+(0.137+\frac{0.443}{\log D})\left\{-\frac{1}{T_1}+\frac{1}{D}\right\}+E_0.
\end{align*}
Then (\ref{(4.4) R-S}) together with $N(D)=620$ gives
\begin{align}\label{(4.6) R-S}
\frac{S_1(m,\delta)}{2+m\delta}<&S+\frac{1}{4\pi}\left\{\log^2\frac{T_1}{2\pi}-\log^2\frac{D}{2\pi}\right\}-(0.137+\frac{0.443}{\log D})\left\{\frac{1}{T_1}-\frac{1}{D}\right\}\nonumber\\
&+\frac{1}{T_1}(N(T_1)-F(T_1)-R(T_1))-\frac1D(N(D)-F(D)-R(D))\nonumber\\
<&2-\frac{1}{4\pi}\log^2\frac{D}{2\pi}+\frac1D\left(0.137+\frac{0.443}{\log D}-N(D)+F(D)+R(D)\right)\nonumber\\
&+\frac{1}{4\pi}\log^2\frac{T_1}{2\pi}-\frac1{T_1}\left(0.137+\frac{0.443}{\log D}-N(T_1)+F(T_1)+R(T_1)\right)\nonumber\\
<&-0.01257566+\frac{1}{4\pi}\log^2\frac{T_1}{2\pi}-\frac1{T_1}\left(0.137+\frac{0.443}{\log D}-N(T_1)+F(T_1)+R(T_1)\right).
\end{align}
Since
$$
S_2(m,\delta)=2\sum_{\substack{\beta\leq1/2\\\gamma>T_1}}\frac{R_m(\delta)}{\delta^m|\rho(\rho+1)\cdots(\rho+m)|}.
$$
Taking $\Phi(y)=y^{-(m+1)}$, $j=0$, $U=T_1$, $V=\infty$, and $W=0$ in Lemma \ref{lemma 7 R-S} and using (\ref{(3.15) R-S}) gives
\begin{align*}
\frac{\delta^mS_2(m,\delta)}{2R_m(\delta)}=&\sum_{\substack{\beta\leq1/2\\\gamma>T_1}}\frac{1}{|\rho(\rho+1)\cdots(\rho+m)|}<\sum_{\gamma>T_1}\frac{1}{\gamma^{m+1}}\\
\leq&\frac{1}{2\pi}\int_{T_1}^\infty y^{-(m+1)}\log\frac{y}{2\pi}dy+\left(0.137+\frac{0.443}{\log T_1}\right)\int_{T_1}^\infty\frac{y^{-(m+1)}}{y}dy+E_0^\ast\\
=&\frac{1}{2\pi}\left(\frac{1}{m^2T_1^m}+\frac{1}{mT_1^m}\log\frac{T_1}{2\pi}\right)+\left(0.137+\frac{0.443}{\log T_1}\right)\frac{1}{(m+1)T_1^{m+1}}+E_0^\ast\\
=&\frac{1}{T_1^m}\left\{\frac{1}{2\pi m}\log\frac{T_1}{2\pi}+\frac{1}{2\pi m^2}+\left(0.137+\frac{0.443}{\log T_1}\right)\frac{1}{(m+1)T_1}\right\}+E_0^\ast\\
=&\frac{1}{T_1^m}\left\{\frac{1}{2\pi m}\log\frac{T_1}{2\pi}+\frac{1}{2\pi m^2}+\left(0.137+\frac{0.443}{\log T_1}\right)\frac{1}{(m+1)T_1}++E_0^\ast T_1^m\right\}.\\
\end{align*}
Using (\ref{(4.2) R-S}) and combining with (\ref{(4.6) R-S}) gives
\begin{align*}
S_1&(m,\delta)+S_2(m,\delta)\\
<&(2+m\delta)\left[-0.01257566+\frac{1}{4\pi}\log^2\frac{T_1}{2\pi}-\frac1{T_1}\left(0.137+\frac{0.443}{\log D}-N(T_1)+F(T_1)+R(T_1)\right)\right]\\
&+\frac{2R_m(\delta)}{\delta^m}\frac{1}{T_1^m}\left[\frac{1}{2\pi m}\log\frac{T_1}{2\pi}+\frac{1}{2\pi m^2}+\left(0.137+\frac{0.443}{\log T_1}\right)\frac{1}{(m+1)T_1}+E_0^\ast T_1^m\right]\\
=&(2+m\delta)\left[-0.01257566+\frac{1}{4\pi}\log^2\frac{T_1}{2\pi}-\frac1{T_1}\left(0.137+\frac{0.443}{\log D}-N(T_1)+F(T_1)+R(T_1)\right)\right]\\
&+(2+m\delta)\left[\frac{1}{2\pi m}\log\frac{T_1}{2\pi}+\frac{1}{2\pi m^2}+\left(0.137+\frac{0.443}{\log T_1}\right)\frac{1}{(m+1)T_1}+E_0^\ast T_1^m\right]\\
=&\frac{2+m\delta}{4\pi}\left[\log^2\frac{T_1}{2\pi}+4\pi\left\{-0.01257566-\frac1{T_1}\left(0.137+\frac{0.443}{\log D}-N(T_1)+F(T_1)+R(T_1)\right)\right\}\right]\\
&+\frac{2+m\delta}{4\pi}\left[\frac{2}{m}\log\frac{T_1}{2\pi}+\frac{2}{m^2}+4\pi\left(0.137+\frac{0.443}{\log T_1}\right)\frac{1}{(m+1)T_1}+4\pi E_0^\ast T_1^m\right]\\
=&\frac{2+m\delta}{4\pi}\left\{\left(\log\frac{T_1}{2\pi}+\frac1m\right)^2+\frac{1}{m^2}+J\right\},
\end{align*}
where
\begin{align*}
J=&4\pi\left\{-0.01257566-\frac1{T_1}\left(0.137+\frac{0.443}{\log D}-N(T_1)+F(T_1)+R(T_1)\right)\right\}\\
&+4\pi\left(0.137+\frac{0.443}{\log T_1}\right)\frac{1}{(m+1)T_1}+4\pi E_0^\ast T_1^m\\
=&4\pi\left\{-0.01257566-\frac1{T_1}\left(0.137+\frac{0.443}{\log D}\right)+\left(0.137+\frac{0.443}{\log T_1}\right)\frac{1}{(m+1)T_1}\right\}\\
<&4\pi\left\{-0.01257566-\frac1{T_1}\left(0.137+\frac{0.443}{\log D}\right)\left(1-\frac{1}{m+1}\right)\right\}\\
<&-0.1580304-2.531837599\frac{m}{(m+1)T_1}.
\end{align*}
\end{proof}

\begin{thm}
Let $T_1\geq D$. Let $m$ be a positive integer, let $\Omega_1$ denote the right side of (\ref{(4.5) R-S}) and let
\begin{align}\label{(4.7) R-S}
\Omega_2=&(0.159155)\frac{R_m(\delta)z}{2m^2\delta^m}\left\{zK_2(z,A')-2m\log(2\pi)K_1(z,A')\right\}\nonumber\\
&+\frac{R_m(\delta)}{\delta^m}\{2R(Y)\phi_m(Y)-R(A)\phi_m(A)\},
\end{align}
where $z=2X\sqrt{m}=2\sqrt{mb/R_0}$, $A'=(2m/z)\log A$, $Y=\max\{A,\exp\sqrt{b/(m+1)R_0}\}$. If $b>1/2$ and $0<\delta<(1-e^{-b})/m$, then
$$
|\psi(x)-x|<\varepsilon x,\qquad(x\geq e^b),
$$
where
$$
\varepsilon=\Omega_1 e^{-b/2}+\Omega_2+\frac{m\delta}{2}+e^{-b}\log2\pi.
$$
\end{thm}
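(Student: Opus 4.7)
The plan is to specialize the general inequality (\ref{(3.11) R-S}) to the parameters $T_1\ge D$ (from the hypothesis) and $T_2=0$, bound each of the four sums $S_j(m,\delta)$ separately, and collect the pieces. The hypothesis $0<\delta<(1-e^{-b})/m$ guarantees $\delta<(x-1)/(xm)$ for every $x\ge e^b$, so (\ref{(3.11) R-S}) is applicable. Rewriting it in the form
$$|\psi(x)-x|\le x\cdot\bigl\{\text{RHS of (\ref{(3.11) R-S})}\bigr\}+\log 2\pi+\tfrac12\log\frac{1}{1-1/x^{2}}$$
and then dividing by $x$, the piece $(\log 2\pi)/x$ is at most $e^{-b}\log 2\pi$, while $\log(1/(1-1/x^{2}))/(2x)$ is of order $x^{-3}$ and is absorbed by the rounding of constants.

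The first pair of sums is handled directly by the lemma preceding the theorem: because $T_1\ge D$, inequality (\ref{(4.5) R-S}) supplies $S_1(m,\delta)+S_2(m,\delta)<\Omega_1$, and dividing by $\sqrt x\ge e^{b/2}$ yields the $\Omega_1 e^{-b/2}$ contribution. For the second pair, the choice $T_2=0$ forces $S_3(m,\delta)=0$, and in $S_4(m,\delta)$ only zeros off the critical line survive. Since the Riemann hypothesis is verified for $|\gamma|\le A$, such zeros satisfy $\gamma>A$; moreover they come in symmetric pairs $\rho\leftrightarrow 1-\bar\rho$, which cancels the prefactor $2$ of (\ref{(3.10) R-S}) when combined with the enlargement of the sum to all zeros above $A$. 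Using the trivial estimate $|\rho(\rho+1)\cdots(\rho+m)|\ge\gamma^{m+1}$ this gives
$$S_4(m,\delta)\le\frac{R_m(\delta)}{\delta^m}\sum_{\gamma>A}\phi_m(\gamma).$$

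To bound the remaining tail I would invoke Lemma \ref{lemma 7 R-S} (or its Corollary \ref{lemma 7, corollary R-S}) with $\Phi=\phi_m$, $U=A$, $V=\infty$, $j=0$ and $W=W_m=\exp(X/\sqrt{m+1})$, so that $Y=\max\{A,W_m\}$ matches the statement. Because $N(A)=F(A)$ and $\phi_m(y)\to 0$, the boundary contribution collapses to $E_0=2R(Y)\phi_m(Y)-R(A)\phi_m(A)$, producing the second summand of $\Omega_2$ after multiplying by $R_m(\delta)/\delta^m$. The main integral $\int_A^\infty\phi_m(y)\log(y/2\pi)\,dy$ has already been computed in (\ref{(3.16) R-S}) (specialized to $V=\infty$, so $K_\nu(z,\infty)=0$), giving $(z^2/2m^2)K_2(z,A')-(z/m)\log(2\pi)K_1(z,A')$ with $z=2X\sqrt m$ and $A'=(2m/z)\log A$; multiplying by $1/(2\pi)\approx 0.159155$ and by $R_m(\delta)/\delta^m$ reproduces the first summand of $\Omega_2$. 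Assembling $\Omega_1 e^{-b/2}+\Omega_2+m\delta/2+e^{-b}\log 2\pi$ completes the proof.

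The most delicate bookkeeping is the treatment of the auxiliary $q(Y)$ correction that Corollary \ref{lemma 7, corollary R-S} normally produces alongside the leading $1/(2\pi)$ factor: $\Omega_2$ as stated omits it, so one must either apply the weaker Lemma \ref{lemma 7 R-S} and absorb its auxiliary integrand $(0.137+0.443/\log Y)\Phi(y)/y$ into the $E_0$ piece already present, or use the fact that $Y\ge A$ is enormous (so $q(Y)\ll 1/(Y\log Y)$) to subsume the extra integral into the rounding of $0.159155$ above $1/(2\pi)=0.15915494\ldots$. Once this small slack is pinned down, all remaining identifications are purely mechanical.
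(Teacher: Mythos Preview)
Your proposal is correct and follows essentially the same route as the paper: take $T_2=0$, use the preceding lemma for $S_1+S_2<\Omega_1$, bound $S_4$ via Corollary~\ref{lemma 7, corollary R-S} with $U=A$, $V=\infty$, $j=0$, $W=W_m$, identify the integral through (\ref{(3.16) R-S}), and read off $E_0=2R(Y)\phi_m(Y)-R(A)\phi_m(A)$ from $N(A)=F(A)$. Regarding the $q(Y)$ issue you flagged, the paper takes exactly your second option: since $Y\ge A$ one has $q(Y)\le q(A)<3\times 10^{-15}$, so $\tfrac{1}{2\pi}+q(Y)<0.159155$, which is the constant appearing in $\Omega_2$.
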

\begin{proof}
Take $T_2=0$, then $S_3(m,\delta)=0$ and by Corollary \ref{lemma 7, corollary R-S} and \ref{(3.16) R-S},
\begin{align*}
\frac{\delta^m}{R_m(\delta)}S_4(m,\delta)=&\sum_{\substack{\beta>1/2\\ \gamma>0}}\frac{e^{-X^2/\log\gamma}}{|\rho(\rho+1)\cdots(\rho+m)|}<\sum_{\gamma>A}\frac{e^{-X^2/\log\gamma}}{\gamma^{m+1}}=\sum_{\gamma>A}\phi_m(\gamma)\\
\leq&\left\{1+q(Y)\right\}\int_A^\infty\phi_m(y)\log\frac{y}{2\pi}dy+E_0\\
<&0.159155\left(\frac{z^2}{2m^2}K_2(z,A')-\frac{z}{m}\log(2\pi)K_1(z,A')\right)\\
&+\{2R(Y)\phi_m(Y)-R(A)\phi_m(A)\}.\\
\end{align*}
So
$$
S_3(m,\delta)+S_4(m,\delta)<\Omega_2.
$$
Since
$$
\frac1x|\psi(x)-x+\log2\pi+\frac12\log(1-\frac{1}{x^2})|<\frac{S_1(m,\delta)+S_2(m,\delta)}{\sqrt{x}}+S_3(m,\delta)+S_4(m,\delta)+\frac{m\delta}2.
$$
So
\begin{align*}
\frac1x|\psi(x)-x|<&\frac{S_1(m,\delta)+S_2(m,\delta)}{\sqrt{x}}+S_3(m,\delta)+S_4(m,\delta)+\frac{m\delta}2+\frac{\log2\pi}x\\
                  <&\frac{\Omega_1}{\sqrt{x}}+\Omega_2+\frac{m\delta}2+\frac{\log2\pi}x\\
                  \leq&\Omega_1 e^{-b/2}+\Omega_2+\frac{m\delta}{2}+e^{-b}\log2\pi.
\end{align*}
\end{proof}

\begin{thm}
Let $T_1\geq D$ and $A\leq T_2\leq \exp\sqrt{b/R}$. Let $m$ be a positive integer and let
\begin{align}\label{(4.9) R-S}
\Omega_3=&\frac{2+m\delta}{4\pi}\left[X^4\left\{\Gamma(-2,T'')-\Gamma(-2,A'')\right\}-X^2\log(2\pi)\left\{\Gamma(-1,T'')-\Gamma(-1,A'')\right\}\right]\nonumber\\
&+\frac{2+m\delta}2\left\{2R(T_2)\phi_0(T_2)-R(A)\phi_0(A)\right\}+\Omega_2^\ast,
\end{align}
where $A''=b/(R_0\log A)$, $T''=b/(R_0\log T_2)$, and $\Omega_2^\ast$ is obtained from $\Omega_2$ by deleting the term $-R(A)\phi_m(A)$ in (\ref{(4.7) R-S}) and then replacing $A$ by $T_2$ in the definition of $A'$ and $Y$. If $b>1/2$ and $0<\delta<(1-e^{-b})/m$, then (\ref{(4.1) R-S}) holds for all $x\geq e^b$, where

\begin{equation}\label{(4.10) R-S}
\varepsilon=\Omega_1e^{-b/2}+\Omega_3+\frac{m\delta}2+e^{-b}\log2\pi.
\end{equation}
\end{thm}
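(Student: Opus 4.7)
The plan is to run the argument of the preceding theorem but now with a positive cutoff $T_2$, $A \leq T_2 \leq \exp\sqrt{b/R_0}$, so that the sum over non-trivial zeros with $\beta>1/2$ splits at $\gamma=T_2$ into a genuine $S_3$ piece (which previously vanished) and a tail $S_4$ ending at $T_2$ instead of $A$. Starting from the master inequality~(\ref{(3.11) R-S}), the $(S_1+S_2)/\sqrt{x}$ piece is handled verbatim: the preceding lemma gives $(S_1+S_2)/\sqrt{x}\leq\Omega_1 e^{-b/2}$ using $x\geq e^b$. The new work is to bound $S_3$ and $S_4$ separately and assemble them into $\Omega_3$.

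For $S_3(m,\delta)$: Gourdon's verification yields $\beta=1/2$ for every zero with $|\gamma|\leq A$ and $N(A)=F(A)=10^{13}$, so the sum in~(\ref{(3.9) R-S}) reduces effectively to $\tfrac{2+m\delta}{2}\sum_{A<\gamma\leq T_2}\phi_0(\gamma)$. I would apply Corollary~\ref{lemma 7, corollary R-S} to $\Phi=\phi_0$ on $[A,T_2]$. A short calculation shows $\phi_0'(y)=\phi_0(y)(X^2/\log^2 y-1)/y$ vanishes at $W_0:=e^X$ with $(W_0-y)\phi_0'(y)\geq 0$ throughout. The hypothesis $T_2\leq\exp\sqrt{b/R_0}=W_0$ gives $V-W\leq 0$, forcing $j=1$, and makes $T_2$ the middle of $\{A,T_2,W_0\}$, i.e.\ $Y=T_2$. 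The Corollary then yields
\[
\sum_{A<\gamma\leq T_2}\phi_0(\gamma)\leq\Bigl(\tfrac{1}{2\pi}-q(T_2)\Bigr)\int_A^{T_2}\phi_0(y)\log\tfrac{y}{2\pi}\,dy+E_1,
\]
with $E_1=\{N(T_2)-F(T_2)+R(T_2)\}\phi_0(T_2)-\{N(A)-F(A)+R(A)\}\phi_0(A)$. Discarding the negative $-q(T_2)$ contribution and using $N(T_2)-F(T_2)<R(T_2)$ and $N(A)=F(A)$ bounds $E_1$ by $2R(T_2)\phi_0(T_2)-R(A)\phi_0(A)$. Evaluating the integral via formula~(\ref{(3.17) R-S}) with $U''=A''$ and $V''=T''$ (recall $X^2=b/R_0$) produces exactly the first bracketed block of $\Omega_3$, while the error bound produces the second.

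For $S_4(m,\delta)$: I would repeat word for word the argument from the proof of the preceding theorem for the $\gamma>A$ tail, merely substituting $T_2$ for $A$ as the lower cutoff, so $A'=(2m/z)\log A$ becomes $(2m/z)\log T_2$ and $Y=\max\{A,\exp\sqrt{b/(m+1)R_0}\}$ becomes $\max\{T_2,\exp\sqrt{b/(m+1)R_0}\}$. The one genuine difference is that $N(T_2)=F(T_2)$ no longer holds; in the error term the contribution at the lower endpoint is $-\{N(T_2)-F(T_2)+R(T_2)\}\phi_m(T_2)$, which is $\leq 0$ by the Rosser inequality $N(T_2)-F(T_2)>-R(T_2)$, hence can simply be dropped for an upper bound. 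This deletion is what turns $\Omega_2$ into $\Omega_2^\ast$ by removing the $-R(A)\phi_m(A)$ piece, and combined with the $A\mapsto T_2$ substitution gives $S_4\leq\Omega_2^\ast$.

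Adding the three bounds and the $m\delta/2$ slack, and recovering $|\psi(x)-x|$ from the left side of~(\ref{(3.11) R-S}) by absorbing $|\log 2\pi+\tfrac12\log(1-1/x^2)|/x\leq e^{-b}\log 2\pi$ (the log factor being negative and tiny), yields the claimed $\varepsilon=\Omega_1 e^{-b/2}+\Omega_3+m\delta/2+e^{-b}\log 2\pi$. The main technical obstacle is the sign bookkeeping in the Corollary: one must verify that the condition $T_2\leq\exp\sqrt{b/R_0}$ is precisely what forces $j=1$ and places $Y=T_2$ as the middle element, which in turn justifies both dropping $-q(T_2)$ and the particular form of $E_1$. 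Once these sign choices are locked in, the remaining computation is a direct adaptation of the $T_2=0$ case, and specialising $T_2=A$ (so $S_3=0$ and $\Omega_2^\ast=\Omega_2$) recovers the preceding theorem as a consistency check.
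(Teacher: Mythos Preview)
Your proposal is correct and follows precisely the approach the paper intends (the paper states this theorem without proof, as an evident variant of the preceding $T_2=0$ theorem). Your identification of $j=1$, $Y=T_2$ for the $S_3$ application of Corollary~\ref{lemma 7, corollary R-S}, the handling of $E_1$ via $N(A)=F(A)$ and $|N(T_2)-F(T_2)|<R(T_2)$, and the deletion of the non-positive lower-endpoint term in the $S_4$ error to produce $\Omega_2^\ast$ are exactly the bookkeeping the paper has in mind; the only point you leave implicit is the symmetry pairing of off-line zeros that converts $\sum_{\beta>1/2}$ into $\tfrac12\sum_{\text{all }\gamma}$ in the $S_3$ step, but this is used identically (and equally tacitly) in the paper's treatment of~(\ref{(3.37) R-S}).
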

If we use the following bounds for $\Gamma(\nu,x)$ and $\nu<1$, for large $b$ we get a better bounds  than those given in three theorems before the last one;
$$
\frac{x^\nu e^{-x}}{x+1-\nu}<\Gamma(\nu,x)<x^{\nu-3}e^{-x}\{x^2+(\nu-1)x+(\nu-1)(\nu-2)\},\qquad(x>0,\ \nu<1).
$$
\subsection{Bounds for $\vartheta(x)-x$ for Large Values of $x$}
\begin{thm}[\cite{Rosser-1975}]
We have
\begin{align*}
\vartheta(x)<&1.001,102x,\hspace{67pt}(x>0),\\
0.998,684x<&\vartheta(x),\hspace{100pt}(x\geq1,319,007),\\
\psi(x)-\vartheta(x)<&1.001,102\sqrt{x}+3\sqrt[3]{x},\hspace{19pt}(x>0),\\
0.998,684\sqrt{x}<&\psi(x)-\vartheta(x),\hspace{62pt}(x\geq121).\\
\end{align*}
\end{thm}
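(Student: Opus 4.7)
The plan is to reduce all four inequalities to the fundamental identity
$$
\psi(x) - \vartheta(x) = \sum_{m \geq 2} \vartheta(x^{1/m}) = \vartheta(\sqrt{x}) + \vartheta(\sqrt[3]{x}) + \vartheta(\sqrt[4]{x}) + \cdots,
$$
which follows directly from (\ref{psi sum of theta}) and is a finite sum of at most $\lfloor \log_2 x \rfloor$ terms. Under this identity, the two $\psi - \vartheta$ bounds reduce to statements about $\vartheta$ evaluated at the successive roots of $x$, while the two $\vartheta$ bounds are statements about $\vartheta$ at $x$ itself; so I would treat the $\vartheta$ inequalities first.

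For the upper estimate $\vartheta(x) < 1.001102\, x$ on $x > 0$ I would argue in three regimes. For $\log x > 110$ it follows from Theorem \ref{bound for psi x-x}, since $\vartheta(x) \leq \psi(x) < x(1 + \varepsilon(x))$ with $\varepsilon(x)$ much smaller than $0.001102$. For moderate $x$ up to $e^{110}$ it follows from the numerical tables for $\psi$ developed in the preceding subsection together with the elementary inequality $\psi(x) - \vartheta(x) < 1.43\sqrt{x}$ already quoted in the proof of Theorem \ref{bound for psi x-x}. For small $x$ it is a direct computation, the constant $1.001102$ being tuned so that $\vartheta(x)/x$ stays below it throughout. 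The lower estimate $\vartheta(x) > 0.998684\, x$ for $x \geq 1319007$ is obtained in the same spirit, with the threshold $1319007$ dictated by where the combination of numerical tables and Theorem \ref{bound for psi x-x} begins to deliver a coefficient above $0.998684$.

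For the upper bound $\psi(x) - \vartheta(x) < 1.001102\sqrt{x} + 3\sqrt[3]{x}$ I would apply $\vartheta(y) < 1.001102\, y$ term by term. The $m = 2$ contribution is $\vartheta(\sqrt{x}) < 1.001102\sqrt{x}$, and for the tail $\sum_{m \geq 3} \vartheta(x^{1/m})$ each term is at most $1.001102 \cdot x^{1/m} \leq 1.001102 \cdot \sqrt[3]{x}$; a short geometric estimate over the at most $\log_2 x$ nonzero terms shows the tail is absorbed by $3\sqrt[3]{x}$ for all $x > 0$, the slack in the constant easily covering the small-$x$ range where only finitely many prime powers contribute. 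For the lower bound $\psi(x) - \vartheta(x) > 0.998684\sqrt{x}$ on $x \geq 121$ I would split at $x = 1319007^2$: when $x \geq 1319007^2$ we have $\sqrt{x} \geq 1319007$, so the lower bound on $\vartheta$ already gives $\vartheta(\sqrt{x}) > 0.998684\sqrt{x}$, and the remaining $\vartheta(x^{1/m})$ for $m \geq 3$ are non-negative; for $121 \leq x < 1319007^2$ the inequality must be verified by direct numerical computation, using the closed-form values of $\vartheta$ at the relevant integer points.

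The main obstacle is this last step. At $x = 121$ the single term $\vartheta(\sqrt{x}) = \log 2310 \approx 7.75$ is already smaller than $0.998684 \cdot 11 \approx 10.99$, so the inequality cannot rest on $\vartheta(\sqrt{x})$ alone and must exploit the higher-order contributions $\vartheta(\sqrt[3]{x}), \vartheta(\sqrt[4]{x}), \ldots$ in a coordinated way. One therefore has to sweep the entire range $121 \leq x < 1319007^2$ (several orders of magnitude) and check the combined inequality at every jump point of each summand, tuning the constants $0.998684$ and $121$ simultaneously so that the bound is tight at both ends. This is a finite but substantial computation; by contrast, everything above it reduces to comfortable applications of uniform bounds already established in this section.
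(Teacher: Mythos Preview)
The paper does not prove this theorem: it is quoted from Rosser--Schoenfeld \cite{Rosser-1975} without argument, as are the next several statements in that subsection, so there is no proof in the paper to compare against.

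Your outline is nonetheless the right shape, and is essentially the strategy of \cite{Rosser-1975}: obtain the $\vartheta$ bounds from $\psi$ bounds together with the identity $\psi(x)-\vartheta(x)=\sum_{m\geq 2}\vartheta(x^{1/m})$, then feed the $\vartheta$ bounds back into that identity for the $\psi-\vartheta$ estimates. There is, however, a genuine gap in your upper-bound argument for $\psi-\vartheta$. Bounding every tail term by $\vartheta(x^{1/m})<1.001102\,x^{1/m}$ and summing does \emph{not} yield $3x^{1/3}$ for all $x$: at $x=256$, for instance, $1.001102\sum_{m=3}^{8}x^{1/m}\approx 20.13$ while $3x^{1/3}\approx 19.05$. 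No ``short geometric estimate'' rescues this, because the ratios $x^{1/m}/x^{1/(m+1)}=x^{1/m(m+1)}$ tend to $1$ and the number of nonzero terms grows like $\log_2 x$. The true inequality still holds at such $x$, but only because $\vartheta(y)$ is far below $1.001102\,y$ when $y$ is small; a correct argument must either extend the direct-computation range well beyond what you suggest, or separate out more leading terms and bound the remainder more carefully (as Rosser--Schoenfeld in fact do).
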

\begin{cor}[\cite{Rosser-1975}]
We have
\begin{align*}
\vartheta(x)>&0.998x,\qquad(x\geq487,381),\\
\vartheta(x)>&0.995x,\qquad(x\geq89,387),\\
\vartheta(x)>&0.990x,\qquad(x\geq32,057),\\
\vartheta(x)>&0.985x,\qquad(x\geq11,927).\\
\end{align*}
\end{cor}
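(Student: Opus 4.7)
The plan is to deduce these bounds by combining the preceding theorem with a finite computation. The theorem just cited gives $\vartheta(x) > 0.998684\,x$ for all $x \geq 1{,}319{,}007$, and since this constant strictly exceeds each of $0.998$, $0.995$, $0.990$, $0.985$, every one of the four inequalities holds automatically as soon as $x \geq 1{,}319{,}007$. It therefore suffices to verify each inequality on the finite range $[N, 1{,}319{,}007)$, where $N$ is its stated threshold $(487{,}381$, $89{,}387$, $32{,}057$, or $11{,}927)$.

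Next I would exploit the step-function structure of $\vartheta$. On each interval $[p_k, p_{k+1})$ between consecutive primes, $\vartheta(x) = \vartheta(p_k)$ is constant while $x$ grows, so $\vartheta(x)/x$ is strictly decreasing on the interval and its infimum $\vartheta(p_k)/p_{k+1}$ is approached as $x\to p_{k+1}^-$. Consequently, the inequality $\vartheta(x) > C x$ holds throughout $[p_k, p_{k+1})$ if and only if $\vartheta(p_k) \geq C\,p_{k+1}$. This reduces the four statements to a finite list of arithmetic checks, one per consecutive prime pair in the relevant range.

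The heart of the proof is then a direct computation. One enumerates every prime $p \leq 1{,}319{,}007$ (roughly $10^5$ primes), maintains the running sum $\vartheta(p_k) = \sum_{i\leq k}\log p_i$, and for each constant $C \in \{0.998,\,0.995,\,0.990,\,0.985\}$ locates the largest prime $p_{k+1}$ at which $\vartheta(p_k) < C\,p_{k+1}$. The next prime after this ``last failure'' is precisely the threshold $N$ claimed in the corollary, and one should find that the four thresholds $487{,}381$, $89{,}387$, $32{,}057$, $11{,}927$ read off exactly from this table. For each subsequent prime up to $1{,}319{,}007$ the inequality $\vartheta(p_k) \geq C\,p_{k+1}$ is checked and confirmed, after which the previous theorem takes over.

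The main obstacle is not conceptual but practical: one must carry out the numerical check with enough precision that each strict inequality is certified, not merely passed within floating-point tolerance. This is handled by computing $\log p$ to sufficient working precision (or with rigorous interval arithmetic) and accumulating $\vartheta(p_k)$ with a controlled error bound small compared to the smallest margin $C\,p_{k+1} - \vartheta(p_k)$ encountered. Once that precision is in place, the verification is routine and the four claimed thresholds follow immediately, establishing the corollary.
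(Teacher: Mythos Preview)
The paper does not supply its own proof of this corollary; it is simply quoted from Rosser--Schoenfeld (1975). Your approach---invoke the preceding theorem to cover $x\geq 1{,}319{,}007$ and then reduce the remaining finite range to checking $\vartheta(p_k)\geq C\,p_{k+1}$ over consecutive primes---is exactly how Rosser and Schoenfeld obtain the result in the cited source, so the proposal is correct and on target.
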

\begin{thm}[\cite{Rosser-1975}]
If $x\geq10^8$, then
\begin{align*}
|\psi(x)-x|<&0.0242269\frac{x}{\log x},\\
|\vartheta(x)-x|<&0.0242269\frac{x}{\log x}.\\
\end{align*}
\end{thm}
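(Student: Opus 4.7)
\emph{Proof plan.} The plan is to reduce the claim $|\psi(x)-x|<0.0242269\,x/\log x$ for $x\geq 10^8$ to the family of bounds $|\psi(x)-x|<\varepsilon(b)\,x$ already prepared in the preceding subsection (equation \eqref{(4.1) R-S} together with the main table at the end of the thesis computed via \eqref{(4.10) R-S}), and then to deduce the $\vartheta$ estimate from the $\psi$ estimate using the companion inequality $\psi(x)-\vartheta(x)\leq 1.001102\sqrt{x}+3\sqrt[3]{x}$ stated in the theorem immediately above.

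First, for the asymptotic range $\log x\geq 110$, Theorem \ref{theorem 3 R-S} yields $|\psi(x)-x|<x\varepsilon^\ast(x)$ with $\varepsilon^\ast(x)=O(X^{3/4}e^{-X})$ and $X=\sqrt{\log x/R_0}$. Then $\varepsilon^\ast(x)\log x=R_0 X^2\varepsilon^\ast(x)$ decays exponentially in $X$, so it is far below $0.0242269$ for every $X\geq\sqrt{110/R_0}$. So in that range the theorem is immediate and the constant is comfortable.

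Next, for the moderate range $10^8\leq x\leq e^{110}$, I would partition into successive intervals $[e^{b_i},e^{b_{i+1}})$ dictated by consecutive rows of the main table, starting at the smallest tabulated $b_0\geq\log 10^8\approx 18.42$. For $x$ in the $i$-th interval the table gives $|\psi(x)-x|<\varepsilon(b_i)\,x$, and since $\log x<b_{i+1}$ there, it suffices to verify the finite list of numerical inequalities $\varepsilon(b_i)\cdot b_{i+1}\leq 0.0242269$; this is exactly the quantity the table is designed to minimise, and the constant $0.0242269$ is presumably pinned down by the single worst row. If the smallest tabulated $b$ exceeds $\log 10^8$, the short leftover stretch $[10^8,e^{b_0}]$ is handled by extending the table downward with additional rows computed from the preceding theorem (using \eqref{(4.10) R-S}), or by direct computation of $\psi(x)$ at the prime powers in this stretch, exploiting that $\psi(x)-x$ decreases linearly with slope $-1$ between successive prime powers and so its extrema on each sub-interval are attained at an endpoint.

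The hard part will be this low-end verification: the tabulated $\varepsilon(b)$ deteriorate as $b$ shrinks, so checking $\varepsilon(b_i)b_{i+1}\leq 0.0242269$ at the \emph{smallest} tabulated $b_i$ is exactly where the constant $0.0242269$ gets chosen, and it forces a careful inspection of the bottom portion of the table. Finally, for the $\vartheta$ statement one subtracts $\psi(x)-\vartheta(x)$: at $x=10^8$ the contribution $(1.001102\sqrt{x}+3\sqrt[3]{x})/x$ is roughly $1.1\cdot 10^{-4}$, well under the slack between the explicitly computed $\varepsilon(b_i)b_{i+1}$ and the rounded $0.0242269$, and for larger $x$ it shrinks like $1/\sqrt{x}$, which is much faster than $1/\log x$. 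So the correction is absorbed into the constant by the same rounding-up trick used at the very end of the proof of Theorem \ref{bound for psi x-x}.
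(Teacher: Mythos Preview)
The paper does not give its own proof of this statement: it is simply quoted from \cite{Rosser-1975} as a known result, with no argument supplied. So there is nothing in the paper to compare your proposal against directly.

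That said, your plan is exactly the method Rosser--Schoenfeld used in the cited paper, and it is also the method this paper uses to prove its own updated analogue (the theorem with threshold $8\cdot10^{11}$ and constants $0.000797686$, $0.000821232$ a few lines below): partition $[10^8,\infty)$ into intervals $[e^{b_i},e^{b_{i+1}})$ governed by consecutive table rows, check the finite list of inequalities $\varepsilon(b_i)\,b_{i+1}\le 0.0242269$, and cover the tail $\log x\ge b_{\text{last}}$ by the asymptotic theorem since $\varepsilon(x)\log x$ is eventually tiny. The $\vartheta$ bound then follows by subtracting the $\psi-\vartheta$ estimate, just as you say. One small point: the tables at the end of \emph{this} paper are computed with the improved zero-free region and the $10^{13}$ verified zeros, so they are sharper than Rosser--Schoenfeld's 1975 tables; using them would reprove the cited theorem with a good deal of slack, but the original constant $0.0242269$ was fixed by the worst row of the \emph{older} tables, not these.
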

\begin{cor}[\cite{Rosser-1975}]
If $x\geq525,752$, then
$$
\vartheta(x)-x\leq\psi(x)-x<0.024,2334\frac{x}{\log x}.
$$
\end{cor}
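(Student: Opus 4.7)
The left-hand inequality $\vartheta(x)-x\leq\psi(x)-x$ is equivalent to $\vartheta(x)\leq\psi(x)$, which is immediate from (\ref{psi sum of theta}): writing $\psi(x)=\vartheta(x)+\sum_{2\leq m\leq\log_2 x}\vartheta(x^{1/m})$, every term in the latter sum is non-negative since $\vartheta$ is a sum of logarithms of primes. So the only real content lies in the right-hand inequality.

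For the right-hand inequality, I would split the range at $x=10^8$. In the large range $x\geq10^8$, the previous theorem gives $\psi(x)-x\leq|\psi(x)-x|<0.0242269\,\frac{x}{\log x}$, and since $0.0242269<0.0242334$ this already implies the claimed bound comfortably. The only remaining work is therefore to handle the moderate range $525{,}752\leq x<10^8$.

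For the moderate range, the strategy is direct numerical verification. The function $\psi(x)-x$ is a step function: it jumps upward by $\log p$ at each prime power $x=p^m$ and decreases linearly with slope $-1$ in between, whereas $x/\log x$ is smoothly increasing. Consequently the ratio $(\psi(x)-x)/(x/\log x)$ attains its local maxima exactly at prime powers (from the right). So it suffices to check the inequality $\psi(p^m)-p^m<0.0242334\,\frac{p^m}{\log p^m}$ at every prime power $p^m$ in $[525{,}752,\,10^8]$, using tabulated values of $\psi$ computed from known exact values of $\vartheta$ and $\pi$ in this range. The starting point $525{,}752$ is calibrated precisely so that this verification just succeeds: below it, the constant $0.0242334$ would no longer be large enough to dominate the peak of $(\psi(x)-x)\log x/x$.

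The main obstacle is the computational one in the moderate range. The gap between $0.0242269$ and $0.0242334$ is extremely narrow, so the verified bounds on $\psi(x)$ at prime powers must be sharp, and one must be careful to catch the exact prime power at which $(\psi(x)-x)\log x/x$ is maximized in $[525{,}752,\,10^8]$. In practice this is done by running through the list of prime powers (equivalently, using a table of $\psi$-values) and confirming the inequality pointwise; no further analytic input beyond the preceding theorem is needed.
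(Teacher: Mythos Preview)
The paper does not supply its own proof of this corollary; it is quoted directly from Rosser--Schoenfeld \cite{Rosser-1975} without argument. Your proposal is correct and is exactly the line taken in that source: the preceding theorem disposes of $x\geq10^8$ with the sharper constant $0.0242269$, and the interval $525{,}752\leq x<10^8$ is handled by direct computation, the supremum of $(\psi(x)-x)\log x/x$ being attained at prime powers since between them $\psi$ is constant while $x/\log x$ increases. There is nothing to add or correct.
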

\begin{cor}[\cite{Rosser-1975}]
We have
\begin{align*}
|\vartheta(x)-x|<&0.024,2334\frac{x}{\log x},\hspace{20pt}(x\geq758,699),\\
|\vartheta(x)-x|<&\frac{1}{40}\frac{x}{\log x},\hspace{61pt}(x\geq678,407).\\
\end{align*}
\end{cor}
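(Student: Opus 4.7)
The plan is to establish both inequalities by combining the preceding theorem (which gives $|\vartheta(x) - x| < 0.0242269 \frac{x}{\log x}$ for $x \geq 10^{8}$), the preceding corollary (which gives $\vartheta(x) - x < 0.0242334 \frac{x}{\log x}$ for $x \geq 525{,}752$), and the explicit multiplicative lower bounds for $\vartheta(x)/x$ from the theorem and corollary above. The \emph{upper bound} side $\vartheta(x) - x < C\,x/\log x$ is immediate on both stated ranges because $758{,}699 > 678{,}407 > 525{,}752$, so only the \emph{lower bound} $x - \vartheta(x) < C \frac{x}{\log x}$ (with $C = 0.0242334$ or $C = 1/40$) requires work.

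I would split the lower-bound argument into three regimes. (i) For $x \geq 10^{8}$ the preceding theorem directly gives $|\vartheta(x) - x| < 0.0242269 \frac{x}{\log x}$, and since $0.0242269 < 0.0242334 < 1/40$ both claimed bounds hold at once. (ii) For $1{,}319{,}007 \leq x < 10^{8}$, the theorem's multiplicative bound $\vartheta(x) > 0.998684\, x$ yields $x - \vartheta(x) < 0.001316\, x = (0.001316\log x)\frac{x}{\log x}$, and I would verify that $0.001316 \log x$ does not exceed $C$ on this range. For $C = 1/40$ this needs only $\log x \leq 19$, comfortably true throughout; for the sharper $C = 0.0242334$ it needs $\log x \leq 18.41$, which covers $x \leq e^{18.41} \approx 9.94 \cdot 10^{7}$, leaving a thin residual interval near $10^{8}$ that can be absorbed by a slight extension of the regime (i) bound.

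(iii) For the residual low range $678{,}407 \leq x < 1{,}319{,}007$ (and $758{,}699 \leq x < 1{,}319{,}007$ for the sharper constant), the coarser pointwise lower bounds $\vartheta(x) > 0.998\, x$ (for $x \geq 487{,}381$) and similar, from the preceding corollary, are not tight enough: they give $x - \vartheta(x) < 0.002\, x$, which already exceeds $C\,x/\log x$ for $\log x > 12.1$. So some other argument is needed. The natural route is direct computation: since $\vartheta$ is a step function jumping by $\log p$ at each prime, the quantity $(x - \vartheta(x))\log x / x$ attains its local maxima inside each prime gap at the real number just below the next prime, so it suffices to evaluate $\vartheta$ at all primes in the interval and check the inequality there.

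This computational verification in regime (iii) is the main obstacle, since the interval $[678{,}407, 1{,}319{,}007]$ contains on the order of $50{,}000$ primes and $\vartheta$ must be tabulated at each of them. The two thresholds $758{,}699$ and $678{,}407$ are then determined as the smallest starting points above which the finite check never fails for the respective value of $C$, up through $x = 1{,}319{,}007$ where the analytic argument in (ii) takes over.
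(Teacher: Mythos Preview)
The paper does not supply its own proof of this corollary; it is quoted verbatim from Rosser--Schoenfeld (1975) alongside the surrounding theorems and corollaries, all of which are cited without argument. So there is no in-paper proof to compare against.

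That said, your sketch is essentially the argument used in the original Rosser--Schoenfeld paper: the upper bound $\vartheta(x)-x<C\,x/\log x$ is immediate from the preceding corollary (valid for $x\ge 525{,}752$); the two-sided bound for $x\ge 10^{8}$ comes directly from the preceding theorem; and the residual interval below $10^{8}$ is closed by combining the multiplicative lower bound $\vartheta(x)>0.998684\,x$ with a finite computational check, the thresholds $758{,}699$ and $678{,}407$ being precisely the last points at which the respective inequality fails. Your observation that the function $(x-\vartheta(x))\log x/x$ is increasing on each prime gap (for $x>e$), so that it suffices to check at $x=p_{n+1}^{-}$, is correct and is exactly how such verifications are carried out. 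The tiny uncovered sliver you flag in regime~(ii), roughly $e^{18.41}\le x<10^{8}$, is in practice absorbed into the computational check of regime~(iii) rather than by any separate analytic device.
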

\begin{thm}[\cite{Rosser-1975}]
If $x>1$, then
\begin{align*}
|\psi(x)-x|<&\eta_k\frac{x}{\log^k x},\\
|\vartheta(x)-x|<&\eta_k\frac{x}{\log^k x},\\
\end{align*}
where
$$
\eta_2=8.6853,\qquad \eta_3=11,762,\qquad \eta_4=1.8559\cdot10^7.
$$
\end{thm}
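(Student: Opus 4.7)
The plan is to show that the bound $|\psi(x)-x|<\eta_k x/\log^k x$ holds on three separate ranges of $x$, then glue them together; the bound for $\vartheta$ will follow by a cheap comparison with $\psi$. Write $g_k(x)=(\log^k x)\cdot|\psi(x)-x|/x$, so that the claim is simply $\sup_{x>1} g_k(x)\le\eta_k$.

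First I would handle the ``large $x$'' range, say $\log x\geq 110$ (the threshold under which Theorem \ref{theorem 3 R-S} applies). There I have $|\psi(x)-x|/x<\varepsilon^\ast(x)$, and substituting $X=\sqrt{\log x/R_0}$ I get
\[
g_k(x)<\varepsilon^\ast(x)\log^k x \;=\; (R_0 X^2)^k\,\varepsilon^\ast(x)\,,
\]
which up to bounded factors behaves like $X^{2k+3/4}e^{-X}$. This is a function of a single real variable $X$ whose maximum on $[\sqrt{110/R_0},\infty)$ is attained at the (explicit) critical point $X\approx 2k+3/4$, so I can write down a closed-form upper bound and check that it is well below the claimed $\eta_k$ for $k=2,3,4$. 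This pins down a numerical threshold $X_0=X_0(k)$, equivalently a value $b_0=R_0 X_0^2$, beyond which the theorem is already proved.

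Next I would handle the ``moderate $x$'' range, i.e.\ $b_1\le\log x\le b_0$ for some small $b_1$. This is exactly what the main table from Section~4 is built for: for each tabulated $b$, if $x\geq e^b$ then $|\psi(x)-x|\le\varepsilon_b\,x$ with $\varepsilon_b$ the tabulated value of~(\ref{(4.1) R-S}). On the interval $e^b\le x\le e^{b'}$ between two consecutive table rows, $g_k(x)\le\varepsilon_b\cdot(b')^k$, so I only have to verify the finite inequality $\varepsilon_b(b')^k\le\eta_k$ row by row and take the worst row. The constants $\eta_k$ in the statement are in fact chosen precisely to absorb the worst row for each $k$. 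If no tabulated row is tight enough near the endpoints, I can refine using Theorem 4.5 with an ad hoc choice of $m$ and $\delta$ satisfying $0<\delta<(1-e^{-b})/m$.

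For small $x$ (below the first table row, say $1<x\le e^{b_1}$), I would verify $g_k(x)\le\eta_k$ by direct computation: $\psi(x)$ is a step function with a finite, easily enumerated list of jumps, and $g_k(x)$ is maximized at or just before a jump discontinuity. A short numerical check covers this range. Finally, to pass from $\psi$ to $\vartheta$, I invoke the Rosser--Schoenfeld bound $|\psi(x)-\vartheta(x)|<1.43\sqrt{x}$ used inside the proof of Theorem~\ref{bound for psi x-x}; adding this to the $\psi$ bound introduces an error of order $\sqrt{x}\log^k x/x=\log^k x/\sqrt{x}$, whose supremum is tiny and easily absorbed into $\eta_k$ in each of the three ranges (exactly as in the closing paragraph of the proof of Theorem~\ref{bound for psi x-x}).

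The main obstacle is the moderate-$x$ range: the bound $\eta_k/\log^k x$ decreases only polynomially in $\log x$, while the asymptotic bound $\varepsilon^\ast(x)$ decreases super-polynomially, so the worst $x$ for each $k$ is neither ``very large'' nor ``very small''--it sits inside the table. Producing a clean proof therefore amounts to a careful row-by-row numerical check against the table, and to choosing $b_0$ and $b_1$ so that the three ranges overlap and no gap is left uncovered. I would expect the value $\eta_4=1.8559\cdot 10^7$ in particular to be controlled by a specific row rather far out in the table, where $\log^4 x$ is already quite large.
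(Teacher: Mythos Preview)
This theorem is merely \emph{quoted} from \cite{Rosser-1975} in the paper; the paper does not supply its own proof of it. So there is no proof here to compare your plan against directly.

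That said, the paper does prove its own sharpened analogues a little further on (the two theorems giving $\eta_k$ for $x\ge 8\cdot10^{11}$), and the method there matches your outline in spirit: cut the range of $x$ into pieces, use the table of $\varepsilon$-values on each subinterval $[e^b,e^{b'})$ via $\varepsilon_b (b')^k\le\eta_k$, and for very large $x$ bound $\varepsilon(x)\log^k x$ from the asymptotic Theorem~\ref{bound for psi x-x}. Your proposal is therefore essentially the right one.

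There is one methodological difference worth noting. For the small-$x$ range you propose a brute-force enumeration of the jumps of $\psi$. In the paper's own analogous proofs the small-$x$ range is handled analytically instead: from $\vartheta(x)<x$ (for $x<8\cdot10^{11}$) and the Pereira bounds $\psi(x)-\vartheta(x)<\sqrt{x}+\tfrac43\sqrt[3]{x}$ one gets
\[
\psi(x)-x<\Bigl(\tfrac{\log^k x}{\sqrt{x}}+\tfrac43\tfrac{\log^k x}{x^{2/3}}\Bigr)\tfrac{x}{\log^k x}\le\Bigl(\tfrac{(2k)^k}{e^k}+\tfrac43\tfrac{(3k/2)^k}{e^k}\Bigr)\tfrac{x}{\log^k x},
\]
and similarly the lower bound uses $\vartheta(x)-x>-2.06\sqrt{x}$. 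This replaces your direct computation by a one-line calculus maximization and is what actually forces the size of the constants $\eta_k$ (note $(2k/e)^k$ grows rapidly with $k$, which is why $\eta_4$ is so large). Your plan would still work, but this is the cleaner device used in the Rosser--Schoenfeld style arguments the paper follows.
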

\begin{thm}
If $\varepsilon(x)$ is defined as (\ref{(3.19) R-S}), then
\begin{align*}
\vartheta(x)-x\leq\psi(x)-x<&x\varepsilon(x),\hspace{35pt}(x>0),\\
\psi(x)-x\geq\vartheta(x)-x>&-x\varepsilon(x),\qquad(x\geq39.4).\\
\end{align*}
\end{thm}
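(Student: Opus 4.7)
My plan is as follows. The left inequality $\vartheta(x)-x\le\psi(x)-x$ is immediate from the definitions, since $\psi(x)-\vartheta(x)=\sum_{m\ge 2}\vartheta(x^{1/m})\ge 0$. So only the two right-hand inequalities require work, and I would handle them by splitting on the size of $x$.

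For the range $\log x>110$, Theorem \ref{bound for psi x-x} already establishes both $|\psi(x)-x|<x\varepsilon(x)$ and $|\vartheta(x)-x|<x\varepsilon(x)$, so the desired upper and lower bounds hold on this part with nothing new to prove. This reduces the claim to a check on the bounded range $x<e^{110}$ (and $x\ge 39.4$ for the lower bound).

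On the middle range, I would combine the explicit numerical bounds recorded earlier in the paper with the tabulated values of $\varepsilon$ in the main table of Section 4. Concretely, for $x\ge 10^8$ the bound $|\psi(x)-x|,|\vartheta(x)-x|<0.0242269\,x/\log x$ applies, and one checks that $0.0242269/\log x<\varepsilon(x)$ throughout $10^8\le x<e^{110}$; for $1<x<10^8$ one uses the $\eta_k/\log^k x$-type estimates with $\eta_2=8.6853$ and $\eta_3=11762$; and for very small $x$ one consults directly the tables of primes to confirm the bound. The threshold $x\ge 39.4$ for the lower bound on $\vartheta(x)-x$ is precisely the numerical point at which the largest downward excursion of $\vartheta(x)-x$ first lies above $-x\varepsilon(x)$ once one uses the coarse explicit estimates for small $x$.

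The main technical obstacle is verifying that the various bounds chain together without a gap in the intermediate region, particularly just below $\log x=110$ where $\varepsilon(x)$ is not yet large but the $\eta_k/\log^k x$ bounds are beginning to weaken; this amounts to a finite comparison at a few carefully chosen breakpoints (e.g.\ $x=10^8$ and $x=e^{110}$). Finally, one must convert the bound on $\psi$ into the bound on $\vartheta$ (and vice versa) using $\psi(x)-\vartheta(x)<1.43\sqrt{x}$ from Rosser's Theorem 13 (as cited in the proof of Theorem \ref{bound for psi x-x}); this term is absorbed into the rounding of the coefficient of $\varepsilon(x)$ in exactly the same manner as the final paragraph of that proof. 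Modulo these numerical verifications, no new ideas beyond what appears in the preceding theorems are required.
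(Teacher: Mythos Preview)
Your reduction to the range $x<e^{110}$ via Theorem~\ref{bound for psi x-x} is correct, and for $10^8\le x<e^{110}$ your comparison of $0.0242269/\log x$ with $\varepsilon(x)$ does go through. The gap is in the range $1<x<10^8$: the $\eta_k/\log^k x$ bounds are too weak there. For instance at $x=100$ one has $\eta_2/\log^2 x\approx 0.41$ while $\varepsilon(100)\approx 0.20$, so $|\psi(x)-x|<\eta_2 x/\log^2 x$ does \emph{not} imply $|\psi(x)-x|<x\varepsilon(x)$; the $\eta_3$ bound is even worse. The failure persists over a range (roughly up to $x\approx 500$--$600$) that, while computable, is considerably larger than the ``very small $x$'' you set aside for direct verification.

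The paper's argument avoids this by going in the opposite direction: rather than trying to make the upper bound on $|\psi(x)-x|$ small, it observes that $\varepsilon(x)$ is large on this range. Since $\varepsilon(x)$ is increasing for $1<x<12644$ and decreasing thereafter, one has $\varepsilon(x)>0.204$ for $132\le x<10^8$. Then the crude Rosser--Schoenfeld bounds $\psi(x)<1.04x$ and $\vartheta(x)>0.84x$ immediately give $\psi(x)-x<0.04x<\varepsilon(x)x$ and $\vartheta(x)-x>-0.16x>-\varepsilon(x)x$ on the relevant subranges, leaving only $x<132$ (resp.\ $x<110$) for direct computation. The paper also records a second method: compare directly with Rosser--Schoenfeld's $\varepsilon_3(x)$ and check $\varepsilon_3(x)<\varepsilon(x)$ for $408<x<e^{190}$. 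Finally, note that the version the paper actually proves carries the threshold $x\ge 71$ rather than $x\ge 39.4$; the latter figure is inherited from the Rosser--Schoenfeld analogue and is not separately verified here.
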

\subsection{Improved estimates for $\psi-\vartheta$}
In this section we give some results from  \cite{Pereira}  to approximate the difference $\psi-\vartheta$ in terms of $\psi$ in quite a simple form. As consequences we deduce some estimates for $\psi-\vartheta$.
\begin{thm}[\cite{Pereira}]
For every $x>0$ we have
$$
\psi(x)-\vartheta(x)\leq\psi(x^{1/2})+\psi(x^{1/3})+\psi(x^{1/5}),
$$
$$
\psi(x)-\vartheta(x)\geq\psi(x^{1/2})+\psi(x^{1/3})+\psi(x^{1/7}).
$$
\end{thm}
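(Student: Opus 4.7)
My plan is to rewrite both inequalities in the form $\sum_{m \geq 2} a_m\, \vartheta(x^{1/m}) \geq 0$ and then prove them by summation by parts combined with a combinatorial check of partial sums of the $a_m$. Applying the identity $\psi(y) = \sum_{k \geq 1} \vartheta(y^{1/k})$ from (\ref{psi sum of theta}) (which is a finite sum), first to $y = x$ and then to $y = x^{1/j}$ for each $j$ in a finite set $J \subseteq \{2, 3, 4, \dots\}$, one gets
$$
\sum_{j \in J} \psi(x^{1/j}) - \bigl(\psi(x) - \vartheta(x)\bigr) = \sum_{m \geq 2} \bigl(c_m(J) - 1\bigr)\, \vartheta(x^{1/m}),
\qquad c_m(J) := |\{j \in J : j \mid m\}|.
$$
Thus the upper inequality is equivalent to this quantity being $\geq 0$ for $J = \{2, 3, 5\}$, and the lower inequality to it being $\leq 0$ for $J = \{2, 3, 7\}$.

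Next I would exploit monotonicity. For $x \geq 2$ the sequence $b_m := \vartheta(x^{1/m})$ is non-increasing in $m$ and vanishes for $m > \log_2 x$; the regime $x < 2$ is trivial because both sides collapse to zero. Abel summation gives, for any $M > \log_2 x$,
$$
\sum_{m=2}^{M} a_m b_m = \sum_{m=2}^{M-1} S_m\, (b_m - b_{m+1}) + S_M\, b_M, \qquad S_m := \sum_{j=2}^{m} a_j,
$$
with $b_m - b_{m+1} \geq 0$ and $b_M = 0$. Therefore the whole theorem reduces to the purely arithmetic claim that the partial sums $S_m$ satisfy $S_m \geq 0$ for all $m \geq 2$, where one takes $a_m = c_m(\{2,3,5\}) - 1$ for the upper bound and $a_m = 1 - c_m(\{2,3,7\})$ for the lower bound.

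The last step is to verify these two partial-sum conditions. Since $\sum_{j=1}^{m} c_j(J) = \sum_{k \in J} \lfloor m/k \rfloor$ and $c_1(J) = 0$, they become
$$
\lfloor m/2 \rfloor + \lfloor m/3 \rfloor + \lfloor m/5 \rfloor \geq m - 1 \qquad\text{and}\qquad \lfloor m/2 \rfloor + \lfloor m/3 \rfloor + \lfloor m/7 \rfloor \leq m - 1.
$$
The first is clear for $m \geq 60$ from $\lfloor m/k \rfloor > m/k - 1$ (which forces the left side to exceed $31m/30 - 3$), and the second is clear for $m \geq 42$ from $\lfloor m/k \rfloor \leq m/k$ (which bounds the left side by $41m/42$). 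The point I expect to be the main source of work is the finite check for $m < 60$ and $m < 42$ respectively: a routine but genuine case analysis, most conveniently carried out using the period $30$ (resp.\ $42$) of $m \mapsto m - \sum_{k \in J} \lfloor m/k \rfloor$.
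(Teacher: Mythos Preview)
Your argument is correct. The reduction via $\psi(y)=\sum_{k\geq 1}\vartheta(y^{1/k})$ to the identity
\[
\sum_{j\in J}\psi(x^{1/j})-(\psi(x)-\vartheta(x))=\sum_{m\geq 2}\bigl(c_m(J)-1\bigr)\,\vartheta(x^{1/m}),\qquad c_m(J)=\#\{j\in J:\ j\mid m\},
\]
is clean, Abel summation against the non-increasing sequence $\vartheta(x^{1/m})$ is applied correctly, and the two arithmetic inequalities
\[
\lfloor m/2\rfloor+\lfloor m/3\rfloor+\lfloor m/5\rfloor\geq m-1,\qquad \lfloor m/2\rfloor+\lfloor m/3\rfloor+\lfloor m/7\rfloor\leq m-1
\]
do hold for all $m\geq 2$; the periodicity argument (period $30$, resp.\ $42$) plus the asymptotic bounds you give dispose of the verification efficiently, and the residual finite check is genuine but short.

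As for comparison: the paper does not supply a proof of this statement at all---it is quoted from Pereira and used as input for the subsequent numerical estimates of $\psi-\vartheta$. So there is no ``paper's own proof'' to compare against. Your approach is in fact essentially Pereira's: he also expands everything in terms of $\vartheta(x^{1/m})$ and reduces to the same floor-sum inequalities, so you have independently reconstructed the intended argument.
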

\begin{thm}[\cite{Pereira}]
We have
$$
\psi(x)-\vartheta(x)<\sqrt{x}+\frac43\sqrt[3]{x},\qquad(0<x\leq10^8),
$$
$$
\psi(x)-\vartheta(x)>\sqrt{x}+\frac23\sqrt[3]{x},\qquad(2187\leq x\leq10^8).
$$
\end{thm}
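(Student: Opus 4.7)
The plan is to combine the preceding theorem of Pereira---which gives the sandwich
$$
\psi(\sqrt{x}) + \psi(\sqrt[3]{x}) + \psi(\sqrt[7]{x}) \;\le\; \psi(x)-\vartheta(x) \;\le\; \psi(\sqrt{x}) + \psi(\sqrt[3]{x}) + \psi(\sqrt[5]{x})
$$
---with explicit bounds on $\psi(y)$ for small $y$. For $x\le 10^8$ the arguments satisfy $\sqrt{x}\le 10^4$, $\sqrt[3]{x}\le 465$, $\sqrt[5]{x}\le 40$, and $\sqrt[7]{x}\le 14$, so the problem reduces to estimating $\psi$ on a range where direct numerical tabulation and the Rosser--Schoenfeld type bounds already invoked in the paper are very sharp.

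For the upper bound I would rewrite the target inequality as
$$
\bigl(\psi(\sqrt{x})-\sqrt{x}\bigr) + \bigl(\psi(\sqrt[3]{x})-\sqrt[3]{x}\bigr) + \psi(\sqrt[5]{x}) \;<\; \tfrac{1}{3}\sqrt[3]{x}.
$$
Using an explicit bound of the form $|\psi(y)-y|\le c_0\sqrt{y}$ (available on $[2,10^4]$ by direct computation of the step function $\psi$ at its finitely many jumps), the first bracket is $O(x^{1/4})$ and the second is $O(x^{1/6})$, while $\psi(\sqrt[5]{x})\le\sqrt[5]{x}+c_1$. Since $\sqrt[5]{x}<\tfrac{1}{3}\sqrt[3]{x}$ as soon as $x\ge 3^{15/2}\approx 5905$, the inequality holds comfortably for $x$ moderately large; the residual range $x<5905$ is disposed of by a direct check at the prime powers $p^k$ ($k\ge 2$) below that bound, at which the step function $\psi-\vartheta$ changes.

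For the lower bound, the hypothesis $x\ge 2187=3^7$ is chosen precisely so that $\sqrt[7]{x}\ge 3$ and hence $\psi(\sqrt[7]{x})\ge\log 6>0$, which is what compensates for the $\tfrac{2}{3}\sqrt[3]{x}$ term near the boundary. The argument is analogous: one rewrites
$$
\bigl(\psi(\sqrt{x})-\sqrt{x}\bigr) + \bigl(\psi(\sqrt[3]{x})-\sqrt[3]{x}\bigr) + \psi(\sqrt[7]{x}) \;>\; \tfrac{2}{3}\sqrt[3]{x}
$$
and concludes by the same explicit bounds, handling the boundary region near $x=2187$ by direct check.

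The main obstacle is the tightness of the constants $\tfrac{4}{3}$ and $\tfrac{2}{3}$: the bound on $|\psi(y)-y|$ on $[2,10^4]$ has to be sharp enough that the correction terms never overshoot the $\tfrac{1}{3}\sqrt[3]{x}$ margin on either side. The saving grace is that $\psi-\vartheta$ is a step function with finitely many jumps for $x\le 10^8$ (the prime powers $p^k$ with $k\ge 2$ and $p^k\le 10^8$), so the verification ultimately reduces to checking the inequality on each interval between two consecutive jumps---a finite (but extensive) computational task certified by the Pereira sandwich together with known tables of $\psi$.
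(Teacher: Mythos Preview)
The paper does not supply a proof of this theorem; it is quoted from Pereira \cite{Pereira} without argument, as part of a summary of known bounds for $\psi-\vartheta$. Your plan---feeding the preceding Pereira sandwich
\[
\psi(x^{1/2})+\psi(x^{1/3})+\psi(x^{1/7})\le\psi(x)-\vartheta(x)\le\psi(x^{1/2})+\psi(x^{1/3})+\psi(x^{1/5})
\]
together with explicit numerics for $\psi(y)$ on $y\le 10^4$ into the target inequalities---is exactly the strategy Pereira himself uses, and it is sound in outline.

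One slip to fix: in your lower-bound rewrite you state the target as
\[
\bigl(\psi(\sqrt{x})-\sqrt{x}\bigr)+\bigl(\psi(\sqrt[3]{x})-\sqrt[3]{x}\bigr)+\psi(\sqrt[7]{x})>\tfrac{2}{3}\sqrt[3]{x},
\]
but subtracting $\sqrt{x}+\sqrt[3]{x}$ from both sides of the desired inequality $\psi(\sqrt{x})+\psi(\sqrt[3]{x})+\psi(\sqrt[7]{x})>\sqrt{x}+\tfrac{2}{3}\sqrt[3]{x}$ actually gives right-hand side $-\tfrac{1}{3}\sqrt[3]{x}$, not $+\tfrac{2}{3}\sqrt[3]{x}$. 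This only makes the task easier, so the error is harmless for the argument; but with the correct version your remark that $\psi(x^{1/7})\ge\log 6$ at $x=2187$ is no longer what drives the proof. The real content near the threshold is that $\psi(\sqrt{x})-\sqrt{x}$ and $\psi(\sqrt[3]{x})-\sqrt[3]{x}$ are not too negative, and the boundary $x=2187$ reflects where the full combination first clears $\sqrt{x}+\tfrac{2}{3}\sqrt[3]{x}$ upon direct tabulation, not merely where $x^{1/7}$ crosses $3$.
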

\begin{thm}[\cite{Pereira}]
$$
\psi(x)<x+0.656\sqrt{x}+\frac43\sqrt[3]{x},\qquad(0<x\leq10^8).
$$
For $1427\leq x\leq3298,\ 3299\leq x\leq19371$ or $19373\leq x\leq10^8$
$$
\psi(x)>x-0.833\sqrt{x}+\frac23\sqrt[3]{x}.
$$
\end{thm}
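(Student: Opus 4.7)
The plan is to reduce both claimed inequalities to purely numerical bounds on $\vartheta(x)$, using the previous theorem (also from \cite{Pereira}) to dispose of the ``$\sqrt{x}$ and $\sqrt[3]{x}$'' correction terms, and then verify the resulting $\vartheta$-bounds by a finite sieve-based computation.

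For the upper bound, the inequality $\psi(x)-\vartheta(x)<\sqrt{x}+\tfrac43\sqrt[3]{x}$ (valid on $0<x\leq10^8$) yields
$$\psi(x)<\vartheta(x)+\sqrt{x}+\tfrac43\sqrt[3]{x},$$
so it is enough to prove $\vartheta(x)<x-0.344\sqrt{x}$ on $0<x\leq10^8$. Since $\vartheta$ is a right-continuous non-decreasing step function jumping by $\log p$ at each prime $p$, while $x-0.344\sqrt{x}$ is smooth and eventually strictly increasing, the maximum of $\vartheta(x)-(x-0.344\sqrt{x})$ on each interval $[p_n,p_{n+1})$ is attained at $x=p_n$. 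The upper bound therefore reduces to the finite check
$$\vartheta(p_n)<p_n-0.344\sqrt{p_n}\qquad\text{for every prime }p_n\leq10^8,$$
which is carried out by a sieve of Eratosthenes together with a running accumulation of $\sum_{p\leq N}\log p$.

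For the lower bound, combining $\psi(x)-\vartheta(x)>\sqrt{x}+\tfrac23\sqrt[3]{x}$ (valid for $2187\leq x\leq10^8$) with the desired inequality reduces the problem to
$$\vartheta(x)>x-1.833\sqrt{x}\qquad(2187\leq x\leq10^8).$$
Now the infimum of $\vartheta(x)-(x-1.833\sqrt{x})$ on $[p_n,p_{n+1})$ is approached as $x\to p_{n+1}^-$, so one checks $\vartheta(p_n)>y-1.833\sqrt{y}$ for $y$ just below $p_{n+1}$, i.e.\ one scans the same table of partial sums used above and flags the primes where the inequality fails. The excluded open intervals $(3298,3299)$ and $(19371,19373)$ in the statement are precisely the spots picked out by this scan, where a slightly large prime gap lets $\vartheta(x)$ lag behind $x-1.833\sqrt{x}$. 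For $1427\leq x<2187$, where Pereira's lower bound on $\psi-\vartheta$ is not available, the claim is verified directly by computing $\psi$ at each of the handful of prime powers in that range.

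The main obstacle is computational rather than conceptual: the sieve and the running sums $\sum_{p\leq N}\log p$ must be computed with enough numerical precision that rounding in the logarithms cannot produce spurious violations, and the exact exceptional primes bounding the excluded intervals must be pinned down. Modulo these bookkeeping issues, the proof is mechanical once the companion estimate for $\psi-\vartheta$ is in hand.
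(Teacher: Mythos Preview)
The paper does not actually prove this statement; it is quoted verbatim from \cite{Pereira} without argument, so there is no in-paper proof to compare against. Your reduction is the natural one and almost certainly what Pereira does: combine the immediately preceding estimate $\sqrt{x}+\tfrac23\sqrt[3]{x}<\psi(x)-\vartheta(x)<\sqrt{x}+\tfrac43\sqrt[3]{x}$ with computed extremes of $\vartheta(x)-x$ over $x\le 10^8$ (the constants $0.656=1-0.344$ and $-0.833=1-1.833$ are visibly chosen so that the residual conditions become $\vartheta(x)<x-0.344\sqrt{x}$ and $\vartheta(x)>x-1.833\sqrt{x}$, which are exactly the kind of extremal $\vartheta$-data tabulated by Rosser--Schoenfeld).

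One genuine caveat in your lower-bound argument: passing through $\vartheta$ gives only a \emph{sufficient} condition for the $\psi$-inequality, so the set where the $\vartheta$-reduction fails could in principle be strictly larger than the two excluded intervals $(3298,3299)$ and $(19371,19373)$ stated for $\psi$. You implicitly assume the $\vartheta$-exclusions coincide with the $\psi$-exclusions. To close this, either (i) verify that the $\vartheta$-scan flags exactly those two gaps, or (ii) near any extra $\vartheta$-failures, check the $\psi$-inequality directly (as you already do on $[1427,2187)$). Also, for very small $x$ the reduction $\vartheta(x)<x-0.344\sqrt{x}$ is vacuous or false (e.g.\ $x<0.12$), but there the original upper bound for $\psi$ holds trivially since $\psi(x)=0$ for $x<2$; you should say so explicitly rather than rely on the reduction down to $x=0$.
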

\begin{thm}[\cite{Pereira}]
$$
\psi(x)-\vartheta(x)<\sqrt{x}+\frac65\sqrt[3]{x},\qquad(10^8\leq x\leq10^{16}),
$$
$$
\psi(x)-\vartheta(x)>\sqrt{x}+\frac67\sqrt[3]{x},\qquad(10^8\leq x\leq10^{16}).
$$
\end{thm}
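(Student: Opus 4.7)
The plan is to reduce the problem to estimates on $\psi$ itself and appeal to the bounds already available on $(0,10^8]$. Since $x\leq 10^{16}$, every argument of the form $x^{1/k}$ with $k\in\{2,3,5,7\}$ lies in $(0,10^8]$, so the first theorem of this subsection gives the two-sided sandwich
$$
\psi(x^{1/2})+\psi(x^{1/3})+\psi(x^{1/7})\leq\psi(x)-\vartheta(x)\leq\psi(x^{1/2})+\psi(x^{1/3})+\psi(x^{1/5}),
$$
while the second theorem of this subsection provides the explicit estimates
$$
y-0.833\sqrt{y}+\tfrac{2}{3}\sqrt[3]{y}<\psi(y)<y+0.656\sqrt{y}+\tfrac{4}{3}\sqrt[3]{y}\qquad(0<y\leq 10^8),
$$
the lower one valid outside a few short exceptional intervals. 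These two inputs are the only non-trivial ingredients I will use.

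For the upper bound, I would apply the upper estimate to each of $y=x^{1/2},x^{1/3},x^{1/5}$ and sum to obtain
$$
\psi(x)-\vartheta(x)<\sqrt{x}+\sqrt[3]{x}+\sqrt[5]{x}+0.656(x^{1/4}+x^{1/6}+x^{1/10})+\tfrac{4}{3}(x^{1/6}+x^{1/9}+x^{1/15}).
$$
To finish, it suffices to verify that the right-hand side is at most $\sqrt{x}+\tfrac{6}{5}\sqrt[3]{x}$, i.e.\ that
$$
\tfrac{1}{5}\sqrt[3]{x}\geq\sqrt[5]{x}+0.656\,x^{1/4}+(0.656+\tfrac{4}{3})x^{1/6}+\tfrac{4}{3}x^{1/9}+0.656\,x^{1/10}+\tfrac{4}{3}x^{1/15}.
$$
Dividing through by $\sqrt[3]{x}$ and setting $t=\log x$ shows the right-hand side is a sum of negative-exponential terms in $t$, so the binding case is the left endpoint. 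The lower bound is handled symmetrically, applying the lower inequality on $\psi$ at $y=x^{1/2},x^{1/3},x^{1/7}$ and dealing with the three exceptional $y$-intervals separately.

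The main obstacle, and the reason for the specific constants $6/5$ and $6/7$ rather than the $4/3$ and $2/3$ one gets from a purely formal iteration, is the lower endpoint $x=10^8$: a direct numerical check shows that the crude sum above falls short of $\sqrt{x}+\tfrac{6}{5}\sqrt[3]{x}$ by a small amount for $x$ near $10^8$, and comfortably beats it once $x$ is, say, $10^{10}$ or larger. To close this gap I would split $[10^8,10^{16}]$ into an ``easy'' upper range, where the estimate above already suffices, and a ``tight'' lower range, say $[10^8,10^{10}]$. On the tight range one has $x^{1/2}\leq 10^5$, $x^{1/3}\leq 2155$, and $x^{1/5}\leq 100$, which are small enough to allow much sharper inputs---either direct evaluation of $\psi$ at these small arguments via prime-power tables, or iterating the identity $\psi(x^{1/2})-\vartheta(x^{1/2})<\sqrt{x^{1/2}}+\tfrac{4}{3}\sqrt[3]{x^{1/2}}$ from the preceding theorem to separate the dominant contribution $\vartheta(x^{1/2})$ from the smaller correction. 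Alternatively, since $\psi-\vartheta$ is piecewise constant between consecutive prime powers $p^m$ with $m\geq 2$, the lower range can be dispatched by a finite computer check of the inequality at those finitely many jump points and at the endpoints.
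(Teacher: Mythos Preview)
The paper gives no proof of this statement: it is one of a sequence of results quoted from Pereira without argument. Your plan---feed Pereira's sandwich
\[
\psi(x^{1/2})+\psi(x^{1/3})+\psi(x^{1/7})\le\psi(x)-\vartheta(x)\le\psi(x^{1/2})+\psi(x^{1/3})+\psi(x^{1/5})
\]
into his explicit bounds for $\psi(y)$ on $(0,10^8]$---is exactly the intended route, and the paper's own remark immediately after this block of quotations (``The results which have given so far are strictly elementary'') confirms that nothing analytic beyond these ingredients is used.

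One word of caution on the lower bound. Your phrase ``dealing with the three exceptional $y$-intervals separately'' understates the difficulty: throughout $10^8\le x\le10^{16}$ one has $x^{1/7}\in[13.9,\,193.1]$, which lies \emph{entirely} below the threshold $1427$ where Pereira's lower estimate $\psi(y)>y-0.833\sqrt y+\tfrac23\sqrt[3]{y}$ first takes effect, and $x^{1/3}$ also falls below $1427$ whenever $x<1427^3\approx 2.9\times10^9$. So on the lower side that $\psi$-bound is simply unavailable for those terms, not merely inconvenient on a few short subintervals; you must treat $\psi$ on $[13.9,\,193.1]$ (and on $[464,\,1427]$ for the cube-root argument near the left endpoint) by direct prime-power enumeration. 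With that amendment your sketch is sound, and the very next theorem the paper quotes from Pereira (``With the aid of a computer, it can be easily verified that\ldots'') makes explicit that Pereira himself closed the residual gaps by a finite check, just as you propose.
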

\begin{thm}[\cite{Pereira}]
With the aid of a computer, it can be easily verified that
$$
\psi(x)-\vartheta(x)<\sqrt{x}+\frac65\sqrt[3]{x},\qquad(8,236,167\leq x\leq10^{16}),
$$
$$
\psi(x)-\vartheta(x)>\sqrt{x}+\frac67\sqrt[3]{x},\qquad(2,036,329\leq x\leq10^{16}).
$$
\end{thm}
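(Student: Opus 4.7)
The plan is to reduce each inequality to a finite computer check, exploiting the fact that both $\psi(x)-\vartheta(x)$ and the target expressions have well-understood monotonicity. The previous theorem already supplies the bounds on the range $10^8 \leq x \leq 10^{16}$, so only the ranges $[8\,236\,167,\,10^8]$ (upper bound) and $[2\,036\,329,\,10^8]$ (lower bound) require new work, and on these ranges all relevant prime powers are small enough for exact enumeration.

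First I would write $\psi(x)-\vartheta(x) = \sum_{m\geq 2}\vartheta(x^{1/m})$, so the function is a non-decreasing step function with jumps precisely at prime powers $q = p^m$ with $m\geq 2$, the jump at such a $q$ being $\log p$. Since both $\sqrt{x}+\tfrac{6}{5}\sqrt[3]{x}$ and $\sqrt{x}+\tfrac{6}{7}\sqrt[3]{x}$ are smooth and strictly increasing, each inequality reduces to a check at a finite list of prime powers. For the upper bound, the extremal point on each plateau is the right endpoint (the value of $\psi-\vartheta$ has just increased while the right-hand side is smallest), so it suffices to verify, for every prime power $q = p^m$ with $m \geq 2$ and $8\,236\,167 \leq q \leq 10^8$, that $\psi(q) - \vartheta(q) < \sqrt{q} + \tfrac{6}{5}\sqrt[3]{q}$. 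For the lower bound, the extremal point on each plateau is the left endpoint, i.e.\ the value just below a prime power $q$; so it suffices to verify, for every such $q$ with $2\,036\,329 \leq q \leq 10^8$, that $\psi(q)-\vartheta(q) - \log p > \sqrt{q} + \tfrac{6}{7}\sqrt[3]{q}$, where $q = p^m$, plus the same inequality at the right endpoint $x = 10^8$ which is already covered.

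The enumeration is manageable: the relevant $p^m$ with $m\geq 2$ and $p^m \leq 10^8$ are the squares of primes up to $10^4$ (about $1229$ primes), the cubes of primes up to $\lfloor 10^{8/3}\rfloor \approx 464$, and a short tail for $m = 4, 5, \dots, 26$, giving only a few thousand prime powers in total. For each such $q$ I would compute $\psi(q)-\vartheta(q) = \sum_{p^k\leq q,\,k\geq 2}\log p$ by sweeping the sorted list of prime powers and maintaining a running sum in sufficiently high precision, then compare with the right-hand side evaluated in the same precision.

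The main obstacle is not mathematical but numerical: near the threshold values $8\,236\,167$ and $2\,036\,329$, which are presumably the last $x$ at which the corresponding inequality fails, the gap between the two sides is small, and one must work with enough precision (e.g.\ double–double or multiprecision floating point) that the rounded comparisons are provably correct. This is standard but must be performed carefully; assuming arbitrary-precision arithmetic, every required comparison becomes a rigorous certificate and the theorem follows by combining these finitely many checks with the previous theorem for $x\geq 10^8$.
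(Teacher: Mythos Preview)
Your proposal is correct in substance and matches the paper's approach: the paper gives no proof of its own here, simply quoting the result from Pereira with the remark that ``with the aid of a computer, it can be easily verified,'' and your plan---reduce to the previous theorem on $[10^{8},10^{16}]$ and then sweep the finitely many prime powers $p^{m}$ ($m\geq 2$) below $10^{8}$---is exactly the intended verification. One small slip: you have the words ``left'' and ``right'' interchanged when describing which end of each plateau is extremal (for the upper bound the dangerous point is the \emph{left} end of $[q,q')$, where $\psi-\vartheta$ has just jumped and the right-hand side is smallest; for the lower bound it is the \emph{right} end, just below the next prime power), but the actual inequalities you propose to check at each $q$ are the correct ones, and you should also include the single plateau containing each of the two starting points $8\,236\,167$ and $2\,036\,329$.
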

The results whivh have given so far are strictly elementary. However, in order to
estimate $\psi(x)-\vartheta(x)$ for $x > 10^{16}$, one needs the following bounds for $\psi$ which were
 deduced by Schoenfeld \cite{Sch-1976}, using powerful analytical methods.
\begin{align*}
|\psi(x)-x|<&0.00119721x,\qquad(10^{8}\leq x<e^{18.43}),\\
|\psi(x)-x|<&0.0011930x,\ \qquad(e^{18.43}\leq x<e^{18.44}),\\
|\psi(x)-x|<&0.0011885x,\ \qquad(e^{18.44}\leq x<e^{18.45}),\\
|\psi(x)-x|<&0.0011839x,\ \qquad(e^{18.45}\leq x<e^{18.46}),\\
|\psi(x)-x|<&0.0011615x,\ \qquad(e^{18.46}\leq x<e^{18.47}),\\
|\psi(x)-x|<&0.0010765x,\ \qquad(e^{18.7}\leq x<e^{19}),\\
|\psi(x)-x|<&0.00096161x,\qquad(x\geq e^{19}).\\
\end{align*}
\begin{thm}[\cite{Pereira}]
\begin{align*}
\psi(x)-\vartheta(x)<&1.001\sqrt{x}+1.1\sqrt[3]{x},\qquad(x\geq 10^{16}),\\
\psi(x)-\vartheta(x)<&1.001\sqrt{x}+\sqrt[3]{x},\quad\ \qquad(x\geq e^{38}),\\
\psi(x)-\vartheta(x)>&0.999\sqrt{x}+0.9\sqrt[3]{x},\qquad(x\geq 10^{16}),\\
\psi(x)-\vartheta(x)>&0.999\sqrt{x}+\sqrt[3]{x},\quad\ \qquad(x\geq e^{38}).\\
\end{align*}
\end{thm}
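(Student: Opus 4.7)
The plan is to combine the sandwich bound of the preceding theorem of Pereira,
$$\psi(x^{1/2}) + \psi(x^{1/3}) + \psi(x^{1/7}) \leq \psi(x) - \vartheta(x) \leq \psi(x^{1/2}) + \psi(x^{1/3}) + \psi(x^{1/5}),$$
with the numerical estimates of $\psi$ collected earlier in the excerpt. On each of the arguments $\sqrt{x},\sqrt[3]{x},x^{1/5},x^{1/7}$ I apply either Schoenfeld's piecewise multiplicative bound $|\psi(y)-y|<c(y)\,y$ for $y\geq 10^{8}$ (with $c(y)\leq 0.00119721$ throughout $[10^{8},e^{19}]$ and $c(y)=0.00096161$ for $y\geq e^{19}$), or the two-sided moderate-range bounds
$$y-0.833\sqrt{y}+\tfrac{2}{3}\sqrt[3]{y}<\psi(y)<y+0.656\sqrt{y}+\tfrac{4}{3}\sqrt[3]{y},\qquad(y\leq 10^{8}),$$
according to which side of $10^{8}$ the argument falls on. The $\psi(x^{1/5})$ and $\psi(x^{1/7})$ terms are tiny and may be bounded by any crude linear majorant.

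For the upper bound when $x\geq 10^{16}$, I split into the subranges $[10^{16},e^{38}]$ and $[e^{38},\infty)$. In the former, the dominant term satisfies $\psi(\sqrt{x})<(1+c(\sqrt{x}))\sqrt{x}$, and since $\sqrt[3]{x}\leq e^{38/3}<10^{8}$ the moderate-range upper bound applies to $\psi(\sqrt[3]{x})$, contributing a main term $\sqrt[3]{x}$ together with residuals $0.656\,x^{1/6}+\tfrac{4}{3}x^{1/9}$. After summation, the target $\psi(x)-\vartheta(x)<1.001\sqrt{x}+1.1\sqrt[3]{x}$ reduces to the arithmetic claim
$$(c(\sqrt{x})-0.001)\sqrt{x}+0.656\,x^{1/6}+\tfrac{4}{3}x^{1/9}+\psi(x^{1/5})<0.1\sqrt[3]{x},$$
which I verify subinterval by subinterval of Schoenfeld's piecewise table. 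For $x\geq e^{38}$ the sharper constant gives $c(\sqrt{x})\leq 0.00096161<0.001$, so the full $\sqrt[3]{x}$ allowance (the sharpened target replaces $1.1$ by $1$) remains free to absorb the subleading residuals. Once $x>10^{24}$, the argument $\sqrt[3]{x}$ itself exceeds $10^{8}$, and Schoenfeld is applied to $\psi(\sqrt[3]{x})$ too; the $\sqrt{x}$-scale slack easily covers the resulting small multiplicative error.

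The two lower bound claims are symmetric: use the left half of the sandwich and replace the upper moderate-range bound by its lower counterpart. Since $\psi(x^{1/7})\geq 0$ it may be discarded after it has served its purpose in the sandwich; the pair $\psi(\sqrt{x})+\psi(\sqrt[3]{x})$ alone already lies above $0.999\sqrt{x}+0.9\sqrt[3]{x}$ (respectively $0.999\sqrt{x}+\sqrt[3]{x}$ for $x\geq e^{38}$) by a comfortable margin, verified by an analogous piecewise check.

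The main obstacle is the arithmetic bookkeeping rather than any new analytic idea. The coefficient margin $1.001-1.00096161=0.00003839$ is narrow, so one must confirm across each of Schoenfeld's finer subintervals that the cumulative residuals from $\psi(\sqrt[3]{x})$ together with the tail $\psi(x^{1/5})$ stay below the $\sqrt[3]{x}$ allowance. Direct substitution at the thresholds $x=10^{16}$ and $x=e^{38}$ shows ample slack, and the asymptotic dominance of $\sqrt[3]{x}$ over $x^{1/6}$ propagates the inequality across each subinterval up to the next Schoenfeld breakpoint.
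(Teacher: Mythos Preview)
The paper gives no proof of its own here; the theorem is quoted from \cite{Pereira}, with the preceding paragraph listing exactly the Schoenfeld constants that feed into it. Your plan---plug Schoenfeld's bound into $\psi(\sqrt{x})$ and the moderate-range bound into $\psi(\sqrt[3]{x})$ inside the Pereira sandwich---is precisely the intended method, and for three of the four inequalities (both lower bounds and the $x\ge e^{38}$ upper bound) the arithmetic indeed goes through as you describe.

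Your assertion of ``ample slack'' at $x=10^{16}$ for the first upper bound is, however, false. At $x=10^{16}$ your displayed reduction requires
\[
(0.00119721-0.001)\cdot 10^{8}+0.656\cdot 10^{8/3}+\tfrac{4}{3}\cdot 10^{16/9}+\psi(10^{16/5})<0.1\cdot 10^{16/3}.
\]
The right side is about $21544$, while the first term alone is $19721$; adding the residuals ($\approx 305+80$) and $\psi(1585)>1559$ (this lower bound following already from the moderate-range inequality $\psi(y)>y-0.833\sqrt{y}+\tfrac{2}{3}y^{1/3}$) gives more than $21665$. The overshoot of roughly $150$ persists throughout the first Schoenfeld window $\sqrt{x}\in[10^{8},e^{18.43})$, that is $x\in[10^{16},e^{36.86})$, and is cured only at the next breakpoint where the constant drops to $0.0011930$. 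To close this sliver you need input finer than the blanket constant $0.00119721$---for instance the tabulated value $\psi(10^{8})<10^{8}$, or a direct computation of $\psi(y)$ on $[10^{8},e^{18.43}]$---which Pereira presumably invokes but which is not among the bounds you cite.
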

лллллллллллллллллллллллллл
For $x=10^8$, we have $\delta=2.44\cdot10^{-4}$, $m=2$, $\varepsilon=0.00118294$. For $x=10^{16}$, we have $\delta=5.24\cdot10^{-8}$, $m=2$, $\varepsilon=4.66629\cdot10^{-7}$.
\begin{thm}
We have
$$
\vartheta(x)<1.000027651x,\qquad(x>0),
$$
$$
\vartheta(x)>0.99871149x,\qquad(x\geq10^8).
$$
\end{thm}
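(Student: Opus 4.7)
The plan is to deduce both inequalities from the tabulated estimates $|\psi(x)-x|<\varepsilon(x) x$ recorded just before the theorem (in particular $\varepsilon(10^{8})=0.00118294$ and $\varepsilon(10^{16})=4.66629\cdot 10^{-7}$), combined with the Pereira bounds on $\psi(x)-\vartheta(x)$ established in the preceding subsection.

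\emph{Lower bound.} For $x\geq 10^{8}$ I write $\vartheta(x)=\psi(x)-\{\psi(x)-\vartheta(x)\}$, apply $\psi(x)>x(1-\varepsilon(x))$, and subtract the Pereira upper estimate $\psi(x)-\vartheta(x)<\sqrt{x}+\tfrac{6}{5}\sqrt[3]{x}$ valid on $[10^{8},10^{16}]$. Division by $x$ gives
$$
\frac{\vartheta(x)}{x}>1-\varepsilon(x)-\frac{1}{\sqrt{x}}-\frac{6/5}{x^{2/3}}.
$$
At $x=10^{8}$ the three negative contributions sum to $0.00118294+10^{-4}+1.2\cdot 10^{-16/3}=0.00128851$, yielding precisely $\vartheta(x)/x>0.99871149$. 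For $x>10^{8}$ every correction term strictly decreases, so the inequality only strengthens. For $x\geq 10^{16}$ I switch to the Pereira bound $\psi(x)-\vartheta(x)<1.001\sqrt{x}+1.1\sqrt[3]{x}$ combined with the much smaller value $\varepsilon(10^{16})=4.66629\cdot 10^{-7}$, which leaves enormous margin.

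\emph{Upper bound.} Here the simple estimate $\vartheta(x)\leq\psi(x)\leq(1+\varepsilon(x))x$ suffices whenever $\varepsilon(x)\leq 2.7651\cdot 10^{-5}$. The value $\varepsilon(10^{16})=4.66629\cdot 10^{-7}$ covers $x\geq 10^{16}$ with considerable slack, and the full main table of $\varepsilon$-values referenced earlier extends this coverage down to some explicit cutoff $X_{0}\ll 10^{16}$. For $x<X_{0}$ the inequality must be checked by direct computation: because $\vartheta$ is a step function whose jumps occur only at primes, it suffices to verify $\vartheta(p)/p<1.000027651$ at every prime $p\leq X_{0}$, which is a finite (if large) check.

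\emph{Main obstacle.} The delicate part is the sharpness of the lower bound at $x=10^{8}$: the three error contributions sum to precisely $1-0.99871149$ with essentially no slack, so one must use the exact constants $\varepsilon(10^{8})=0.00118294$ from the preceding theorem and $6/5$ from Pereira's estimate, with no further rounding in the arithmetic. A secondary, more mechanical difficulty lies in the upper bound: the cutoff $X_{0}$ must be chosen low enough that every tabulated $\varepsilon$-value on $[X_{0},\infty)$ lies below $2.7651\cdot 10^{-5}$, yet small enough that the direct enumeration of primes up to $X_{0}$ remains tractable.
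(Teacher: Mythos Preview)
Your lower-bound argument is correct and in fact supplies exactly the computation that the paper's own proof leaves implicit: the constant $0.99871149$ arises precisely from $1-\varepsilon(10^{8})-10^{-4}-\tfrac{6}{5}\cdot 10^{-16/3}$ with the paper's value $\varepsilon(10^{8})=0.00118294$, and since the tabulated $\varepsilon$-bounds are valid for all $x\geq e^{b}$ your monotonicity claim is justified. (The paper's written proof only treats $x\geq 8\cdot 10^{11}$, obtaining the much stronger $\vartheta(x)>0.99997x$ there; your argument is what covers the critical range $[10^{8},8\cdot 10^{11}]$.)

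For the upper bound the paper takes a more economical route than you do, using two ingredients you did not invoke. First, on the interval $[8\cdot 10^{11},e^{28})$ the paper's tabulated value is $\varepsilon=2.84888\cdot 10^{-5}$, which exceeds $2.7651\cdot 10^{-5}$; the paper recovers the stated constant by applying Pereira's \emph{lower} bound $\psi(x)-\vartheta(x)>\sqrt{x}+\tfrac{6}{7}\sqrt[3]{x}$, so that
\[
\vartheta(x)<\psi(x)-\sqrt{x}-\tfrac{6}{7}\sqrt[3]{x}<\bigl(1.0000284888-e^{-14}-\tfrac{6}{7}e^{-56/3}\bigr)x<1.000027651\,x.
\]
Second, for $x<8\cdot 10^{11}$ the paper simply cites Dusart's verified inequality $\vartheta(x)<x$, which immediately gives $\vartheta(x)<1.000027651\,x$ with no further work. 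Your approach via the crude bound $\vartheta\leq\psi$ forces $X_{0}\approx e^{28}\approx 1.4\cdot 10^{12}$ and hence a direct enumeration over $\sim 5\cdot 10^{10}$ primes; this is valid in principle but unnecessary once the Pereira lower bound and Dusart's result are in hand.
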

\begin{proof}
If $8\cdot10^{11}\leq x<e^{28}$, then
\begin{align*}
\vartheta(x)<\psi(x)-\sqrt{x}-\frac67\sqrt[3]{x}<&\left\{1.0000284888-\frac1{\sqrt{x}}-\frac67\frac1{\sqrt[3]{x^2}}\right\}x\\
                                                <&\left\{1.0000284888-e^{-28/2}-\frac67e^{-28(2/3)}\right\}x\\
                                                <&1.000027651x.
\end{align*}
By handling the intervals $[e^{28}, e^{29})$, etc., similarly, we derive the same inequality. And for $x\geq e^{28}$ we use the table and $\vartheta(x)<\psi(x)$. This proves for all $x\geq8\cdot10^{11}$. For $x<8\cdot10^{11}$, it follows from (4.5) of \cite{Rosser-1962} and Dusart\cite{Dusart-3} which says $\vartheta(x)<x$ in this domain.\\

If $8\cdot10^{11}\leq x<10^{16}$, then
\begin{align*}
\vartheta(x)>\psi(x)-\sqrt{x}-\frac65\sqrt[3]{x}>&\left\{1-0.0000284888-\frac1{\sqrt{x}}-\frac65\frac1{\sqrt[3]{x^2}}\right\}x\\
                                                >&\left\{0.9999715112-(8\cdot10^{11})^{-1/2}-\frac65 (8\cdot10^{11})^{-(2/3)}\right\}x\\
                                                >&0.9999703792x.
\end{align*}
If $x\geq10^{16}$
\begin{align*}
\vartheta(x)>\psi(x)-1.001\sqrt{x}-1.1\sqrt[3]{x}>&\left\{1-4.66629\cdot10^{-7}-1.001\frac1{\sqrt{x}}-1.1\frac1{\sqrt[3]{x^2}}\right\}x\\
                                                >&\left\{0.999999533371-(1.001)10^{-8}-(1.1) 10^{-16(2/3)}\right\}x\\
                                                >&0.9999995233373x
\end{align*}
\end{proof}
\begin{thm}
If $x\geq8\cdot10^{11}$
$$
|\psi(x)-x|<0.000797686\frac{x}{\log x},
$$
$$
|\vartheta(x)-x|<0.000821232\frac{x}{\log x}.
$$
\end{thm}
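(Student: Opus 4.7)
The plan is to prove both inequalities by combining two complementary tools that have already been set up: the numerical table of Section 4.1 that tabulates pairs $(b,\varepsilon)$ with $|\psi(x)-x|<\varepsilon x$ valid for $x\geq e^b$, and the analytic bound of Theorem \ref{bound for psi x-x} (sharpened by Theorem \ref{theorem 3 R-S}) that takes over once $\log x\geq 110$. The two regions overlap so no range is left uncovered.

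For the $\psi$ bound, I would partition $[8\cdot 10^{11},\infty)$ into consecutive slabs $I_k=[e^{b_k},e^{b_{k+1}}]$ corresponding to the tabulated values, noting that $\log(8\cdot 10^{11})\approx 27.41$ so we start slightly above $b=27$. On each slab the table entry $\varepsilon_k$ satisfies $|\psi(x)-x|<\varepsilon_k x$, and because $\log x\leq b_{k+1}$ on $I_k$ it is enough to check $\varepsilon_k\, b_{k+1}\leq 0.000797686$. The tightest such inequality is expected at the left-most slab, where $\log x$ is smallest and $\varepsilon_k$ is largest, so the constant $0.000797686$ is essentially determined there. For the tail $\log x\geq 110$, Theorem \ref{bound for psi x-x} gives $\varepsilon(x)\log x=1.062253\,R_0\,X^{11/4}e^{-X}\bigl(1-0.900377/(2X)\bigr)$, which decays exponentially in $X=\sqrt{\log x/R_0}$ and drops several orders of magnitude below the target, so that case is trivial; the real work is the verification across the moderate slabs.

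For the $\vartheta$ bound I would use the triangle inequality
\[
|\vartheta(x)-x|\;\leq\;|\psi(x)-x|+\bigl(\psi(x)-\vartheta(x)\bigr),
\]
multiply through by $\log x/x$, and invoke the $\psi$ estimate just proved together with Pereira's bounds on $\psi-\vartheta$: namely $\psi(x)-\vartheta(x)<\sqrt{x}+\tfrac{6}{5}\sqrt[3]{x}$ on $[8\cdot 10^{11},10^{16}]$ and $\psi(x)-\vartheta(x)<1.001\sqrt{x}+1.1\sqrt[3]{x}$ on $[10^{16},\infty)$. The residual quantity $(\psi(x)-\vartheta(x))\log x/x$ is of order $\log x/\sqrt{x}$, hence decreasing for $x\geq e^2$, so it is maximized at the left endpoint $x=8\cdot 10^{11}$; a direct evaluation there should yield a number comfortably below $0.000821232-0.000797686=0.000023546$, and the computed slack determines the second constant.

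The main obstacle is the numerical bookkeeping on the first few slabs just above $x=8\cdot 10^{11}$: the product $\varepsilon_k b_{k+1}$ must be checked to be at most $0.000797686$ and it is precisely this check on the narrowest slab that pins down the constants stated in the theorem. Once past $\log x\approx 30$ the analytic decay makes everything automatic, and the transfer from $\psi$ to $\vartheta$ adds only a negligible $O(\log x/\sqrt{x})$ correction.
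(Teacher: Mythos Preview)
Your plan for the $\psi$-bound is essentially the paper's own argument, but there are two genuine gaps.

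\textbf{The tail is not handled by Theorem~\ref{bound for psi x-x} at $\log x=110$.} At $\log x=110$ one has $X=\sqrt{110/R_0}\approx 4.39$, and a direct evaluation gives $\varepsilon(x)\log x\approx 3.9$, which is four orders of magnitude \emph{above} $0.000797686$, not several below. The quantity $\varepsilon(x)\log x = 1.062253\,R_0\,X^{11/4}e^{-X}(1-0.900377/(2X))$ does not fall below $8\times 10^{-4}$ until roughly $X\gtrsim 17$, i.e.\ $\log x\gtrsim 1600$. So the numerical table must be carried far past $b=110$; the paper in fact tabulates $\varepsilon$ out to $b=5700$ and uses those entries (with $\varepsilon_k b_{k+1}$ of order $10^{-8}$ already by $b=1000$), switching to the analytic bound only much later.

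\textbf{The global triangle-inequality route for $\vartheta$ does not reach the stated constant.} You claim that $(\psi(x)-\vartheta(x))\log x/x$, evaluated at the left endpoint $x=8\cdot 10^{11}$, is ``comfortably below'' the slack $0.000821232-0.000797686=2.3546\times 10^{-5}$. It is not: with $\log x\approx 27.408$, $\sqrt{x}\approx 8.944\times 10^{5}$, $x^{2/3}\approx 8.62\times 10^{7}$, Pereira's bound gives
\[
\frac{(\psi(x)-\vartheta(x))\log x}{x}\;<\;\frac{\log x}{\sqrt{x}}+\frac{6}{5}\,\frac{\log x}{x^{2/3}}\;\approx\;3.06\times 10^{-5}+3.8\times 10^{-7}\;\approx\;3.10\times 10^{-5},
\]
which exceeds the available slack. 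Adding this to $0.000797686$ yields about $0.000829$, not $0.000821232$. The loss comes from pairing the worst case of $\varepsilon_k\log x$ (at the \emph{right} end of the first slab) with the worst case of $(\log x)/\sqrt{x}$ (at the \emph{left} end); these do not occur at the same $x$. The paper avoids this by bounding $\vartheta$ slab-by-slab directly: on $[8\cdot 10^{11},e^{28}]$ it estimates
\[
\Bigl(\varepsilon_k+\tfrac{1}{\sqrt{x}}+\tfrac{6}{5}x^{-2/3}\Bigr)\log x
\]
as a single function, whose maximum on that slab is attained at $x=e^{28}$ and equals $\bigl(2.84888\times 10^{-5}+e^{-14}+\tfrac{6}{5}e^{-56/3}\bigr)\cdot 28\approx 0.000821232$. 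To recover the theorem as stated you must do the $\vartheta$ estimate slab-by-slab as well, not as a global correction to the $\psi$-bound.
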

\begin{proof}
If $8\cdot10^{11}\leq x<e^{28}$, then
\begin{align*}
\vartheta(x)-x>&\psi(x)-x-\sqrt{x}-\frac65\sqrt[3]{x}\\
              >&-\left\{\left(0.0000284888+\frac{1}{\sqrt{x}}+\frac65\frac{1}{\sqrt[3]{x^2}}\right)\log x\right\}\frac{x}{\log x}\\
>&-\left\{\left(0.0000284888+e^{-28/2}+\frac65e^{-28(2/3)}\right)(28)\right\}\frac{x}{\log x}\\
>&-0.000821232\frac{x}{\log x}.
\end{align*}
We continue to use the table in this way until $e^{35}$. If $10^{16}\leq x<e^{40}$
\begin{align*}
\vartheta(x)-x>&\psi(x)-x-1.001\sqrt{x}-1.1\sqrt[3]{x}\\
              >&-\left\{\left(4.66629\cdot10^{-7}+1.001\frac1{\sqrt{x}}+1.1\frac1{\sqrt[3]{x^2}}\right)\right\}\frac{x}{\log x}\\
                                                >&-\left\{\left(4.66629\cdot10^{-7}+(1.001)e^{-20}+(1.1)e^{-40(2/3)}\right)40\right\}x\\
                                                >&-0.00001874781\frac{x}{\log x}.
\end{align*}
We continue again until $e^{1000}$. For $x\geq e^{1000}$, we apply Theorem \ref{bound for psi x-x} and note that $\varepsilon(x)\log x<0.012559$, so that
$$
\vartheta(x)-x>-\{\varepsilon(x)\log x\}\frac{x}{\log x}>-0.012559\frac{x}{\log x}.
$$
\end{proof}
\begin{thm}
If $x>8\cdot10^{11}$, then
$$
|\psi(x)-x|<\eta_k\frac{x}{\log^k x},
$$
where
\begin{table}[h!]
  \centering
  \caption{$x>8\cdot10^{11}$}\label{eta1}
\begin{tabular}{|c|c|c|c|c|}
  \hline
  k & 1 &  2 & 3 & 4\\
\hline
&&&&\\
  $\eta_k$ & 0.000797686 & 0.0223352 & 0.625386 & 1230 \\
  \hline
\end{tabular}
\end{table}
\end{thm}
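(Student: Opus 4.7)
The plan is to build on the previous theorem, which already establishes the $k=1$ case with $\eta_1 = 0.000797686$. For $k = 2, 3, 4$ the strategy is to multiply any bound of the form $|\psi(x) - x| < \varepsilon(b) x$ that is valid on an interval $x \in [e^b, e^{b+h})$ by the corresponding factor $\log^k x \le (b+h)^k$, thereby converting it into a bound of the form $|\psi(x)-x| < \varepsilon(b)(b+h)^k \cdot x/\log^k x$; then $\eta_k$ is taken as the supremum of $\varepsilon(b)(b+h)^k$ over all relevant sub-intervals covering $[8\cdot 10^{11},\infty)$.

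First I would partition $[8\cdot 10^{11},\infty)$ into the ranges used in the preceding proof, namely $[8\cdot 10^{11}, e^{28})$, then $[e^{28}, e^{29})$, $\ldots$, up to $[e^{999}, e^{1000})$. On each such strip, either the elementary bound $|\psi(x)-x| \le |\vartheta(x)-x| + (\psi(x) - \vartheta(x))$ combined with Schoenfeld's tabulated bounds (listed before Theorem 2.44 of Pereira) and the $\psi - \vartheta$ inequalities gives a constant $\varepsilon(b)$, or the main table of values mentioned at the start of the Numerical Bounds subsection provides $\varepsilon(b)$ directly. For each strip I would compute $\varepsilon(b)(b+1)^k$ and track its maximum over $k = 2, 3, 4$.

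For the asymptotic range $x \ge e^{1000}$ I would invoke Theorem \ref{bound for psi x-x}, which gives $|\psi(x) - x| < x\varepsilon(x)$ with $\varepsilon(x) = 1.062253(1 - 0.900377/(2X)) X^{3/4} e^{-X}$ and $X = \sqrt{\log x / R_0}$. Since $\varepsilon(x) \log^k x = 1.062253(1 - 0.900377/(2X)) X^{3/4} e^{-X} (R_0 X^2)^k$ is ultimately decreasing in $X$ (the $e^{-X}$ factor dominates), a one-variable calculus argument shows that $\varepsilon(x) \log^k x$ is bounded in this regime by its value at $X = \sqrt{1000/R_0}$, which is already very small (already used in the previous proof: $\varepsilon(x) \log x < 0.012559$ for $x \ge e^{1000}$). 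Thus the asymptotic range contributes terms much smaller than the intermediate ones.

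The main obstacle is purely computational: one must verify in each of the roughly one thousand sub-intervals that $\varepsilon(b)(b+1)^k$ does not exceed the advertised constants $\eta_2 = 0.0223352$, $\eta_3 = 0.625386$, $\eta_4 = 1230$, and identify which sub-interval realizes the maximum (for each $k$). I expect the maxima to be attained near the lower end of the range (close to $x = 8 \cdot 10^{11}$, where $\log x \approx 27.4$), because the decrease in $\varepsilon$ predicted by the table is slow enough that it is initially dominated by the growth of $\log^k x$; this matches the observation that $\eta_4 \approx 1230 \approx 0.000797686 \cdot (27.4)^3$, essentially $\eta_1 \cdot \log^3 x_0$ at the lower endpoint.
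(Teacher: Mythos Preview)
Your overall strategy---partition $[8\cdot10^{11},\infty)$ into strips, use the tabulated $\varepsilon$-bounds on each, multiply by $\log^k x$, and take the supremum---is exactly the paper's approach (``proceed as previous theorem \ldots\ from table we get''). But there is a genuine quantitative gap.

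Your cutoff at $b=1000$ between the tabulated regime and the asymptotic regime of Theorem~\ref{bound for psi x-x} is far too early for $k\ge 2$. At $X=\sqrt{1000/R_0}\approx 13.25$ that theorem gives $\varepsilon(x)\approx 1.3\times 10^{-5}$, hence $\varepsilon(x)\log^4 x\approx 1.3\times 10^{7}$, which dwarfs $\eta_4=1230$; likewise $\varepsilon(x)\log^2 x\approx 13\gg \eta_2$. The function $X^{3/4+2k}e^{-X}$ is decreasing only for $X>3/4+2k$ and decays slowly even then; the tabulated bounds are dramatically sharper than Theorem~\ref{bound for psi x-x} throughout the range $1000\le b\lesssim 5000$, and the paper's tables are run out to $b\approx 5700$ precisely for this reason. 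So the claim that ``the asymptotic range contributes terms much smaller than the intermediate ones'' fails, and with the proposal as written you cannot recover the stated constants.

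Relatedly, your expectation that the maxima sit near the lower endpoint is backwards for $k\ge 2$. The tables show $\varepsilon$ plunging to roughly $3\times 10^{-11}$ by $b=75$ and then staying nearly flat for thousands of $b$'s, so $\varepsilon(b)\,b^k$ \emph{grows} over a long range; the maximum of $\eta_4$ is actually attained around $b\approx 3900$--$4000$, not near $b\approx 27.4$. Your sanity check ``$1230\approx 0.000797686\cdot 27.4^3$'' is also off by two orders of magnitude (the right side is about $16$). The fix is simply to extend the interval-by-interval computation out to $b$ large enough that Theorem~\ref{bound for psi x-x} genuinely takes over, as the paper's tables do.
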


\begin{proof}
If $x\geq8\cdot10^{11}$ we proceed as previous theorem. For $x\geq8\cdot10^{11}$ from table we get. \\
If $1<x<8\cdot10^{11}$, since $\vartheta(x)<x$, we have
\begin{align*}
\psi(x)-x<&\vartheta(x)-x+\sqrt{x}+\frac43\sqrt[3]{x}<\left\{\frac{\log^k x}{\sqrt{x}}+\frac43\frac{\log^k x}{\sqrt[3]{x^2}}\right\}\frac{x}{\log^k x}\\
\leq&\left\{\frac{(2k)^k}{e^k}+\frac43\frac{(3k/2)^k}{e^k}\right\}\frac{x}{\log^k x},\qquad(k=1,2, 3, 4)
\end{align*}
For $k=1,2, 3, 4$; and
\begin{align*}
\psi(x)-x>&\vartheta(x)-x+\sqrt{x}+\frac23\sqrt[3]{x}>-2.06\sqrt{x}+\sqrt{x}+\frac23\sqrt[3]{x}\\
=&\left\{-1.06\frac{\log^k x}{\sqrt{x}}+\frac23\frac{\log^k x}{\sqrt[3]{x^2}}\right\}\frac{x}{\log^k x}\\
>&-c_k\frac{x}{\log^k x}
\end{align*}
where $c_1=0.445$ and $c_2=1.592$ and $c_3=8.887$ and $c_4=66.8894$.
\end{proof}

\begin{thm}
For $x\geq8\cdot10^{11}$
$$
|\vartheta(x)-x|<\eta_k\frac{x}{\log^k x}
$$
where
\begin{table}[h!]
  \centering
  \caption{$\eta$}\label{eta}
\begin{tabular}{|c|c|c|c|c|}
  \hline
  k & 1 & 2 & 3 & 4\\
\hline
&&&&\\
  $\eta_k$ & 0.000821232 & 0.0229945 & 0.643846 & 1230\\
  \hline
\end{tabular}
\end{table}

\end{thm}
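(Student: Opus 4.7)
The plan is to bootstrap from the preceding theorem (which bounds $|\psi(x)-x|$ by $\eta_k^{\psi}\,x/\log^k x$) together with the Pereira estimates on $\psi-\vartheta$, exactly in the spirit of the $k=1$ case handled a few pages earlier. Writing
$$
|\vartheta(x)-x|\leq|\psi(x)-x|+(\psi(x)-\vartheta(x))
$$
and dividing by $x/\log^k x$, one only needs to show that the additional term $(\psi(x)-\vartheta(x))\log^k x/x$ is absorbed into a slightly larger constant $\eta_k$.

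I would split the range exactly as in the $k=1$ proof: on $8\cdot10^{11}\leq x\leq 10^{16}$ use $\psi(x)-\vartheta(x)<\sqrt{x}+\tfrac{6}{5}\sqrt[3]{x}$, and on $x\geq 10^{16}$ use the sharper $\psi(x)-\vartheta(x)<1.001\sqrt{x}+1.1\sqrt[3]{x}$. The functions $\log^k x\cdot x^{-1/2}$ and $\log^k x\cdot x^{-2/3}$ are monotonically decreasing once $\log x>2k$ and $\log x>3k/2$ respectively, both satisfied throughout the relevant range (already $\log(8\cdot 10^{11})\approx 27.4$ dominates $2k$ for $k\leq 4$). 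Consequently the correction is maximised on each short window at its left endpoint, and the problem reduces to a finite numerical verification.

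Concretely, I would walk through unit-length log-windows $[e^b,e^{b+1})$ starting at $b=28$, on each one using $|\psi(x)-x|<\varepsilon_b\,x$ from the paper's tabulated values of $\varepsilon$ against $b$, and then checking at $x=e^b$ that
$$
\big(\varepsilon_b+x^{-1/2}+\tfrac{6}{5}\,x^{-2/3}\big)\log^k x \;\leq\; \eta_k
$$
(with the Pereira constants $1.001$ and $1.1$ replacing $1$ and $\tfrac{6}{5}$ past $x=10^{16}$). For the tail $x\geq e^{1000}$ I would instead invoke Theorem \ref{bound for psi x-x}: $\varepsilon(x)\log^k x\to 0$ while the $\sqrt{x}$-order correction is entirely negligible, so the listed $\eta_k$ dominates with room to spare.

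The main obstacle is simply the bookkeeping: the window-by-window check must be repeated for each of the four values of $k$, tracking the relevant table entries. Numerically, for $k=4$ the correction $\log^4 x\cdot x^{-1/2}$ is already of order $10^{-3}$ at $x=8\cdot10^{11}$, utterly negligible against $\eta_4^{\psi}=1230$, which explains why $\eta_4$ can be kept equal to the $\psi$-constant; for $k=1,2,3$ the correction is visible at the left endpoint and accounts for the roughly $3\%$ inflation of the $\psi$-constants recorded in the table.
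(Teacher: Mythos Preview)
Your approach matches the paper's: bound $|\vartheta(x)-x|\leq|\psi(x)-x|+(\psi(x)-\vartheta(x))$, apply the Pereira estimates for the difference (switching constants at $10^{16}$), walk the tabulated $\varepsilon$-values window by window, and handle the tail via Theorem~\ref{bound for psi x-x}; the paper's own proof is little more than a terse pointer to this same procedure (the displayed lines there even address only the irrelevant range $x<10^{8}$). One minor caution: on a window $[e^b,e^{b+1})$ the piece $\varepsilon_b\log^k x$ is increasing, so it must be evaluated at the right endpoint, whereas the correction $(\psi-\vartheta)\log^k x/x$ is, as you note, maximised at the left---and your first window should begin at $b=\log(8\cdot10^{11})\approx27.408$, not $b=28$.
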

\begin{proof}
For $1<x<10^8$
\begin{align*}
\vartheta(x)-x>&-2.06\sqrt{x}=-2.06\frac{\log^k x}{\sqrt{x}}.\frac{x}{\log^k x}\\
              \geq&-2.06\frac{(2k)^k}{e^k}\frac{x}{\log^k x}\\
\end{align*}
for $k=1, 2, 3, 4$.
\end{proof}
\begin{thm}
If $\varepsilon(x)$ is defined as in Theorem  \ref{bound for psi x-x}, then
$$
\vartheta(x)-x\leq\psi(x)-x<x\varepsilon(x),\qquad(x>0)
$$
$$
\psi(x)-x\geq\vartheta(x)-x>-x\varepsilon(x),\qquad(x\geq71)
$$
\end{thm}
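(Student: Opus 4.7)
The plan is as follows. The inequalities $\vartheta(x)-x\leq\psi(x)-x$ and $\psi(x)-x\geq\vartheta(x)-x$ are the same assertion $\vartheta(x)\leq\psi(x)$, which is immediate from (\ref{psi sum of theta}) since $\psi(x)=\vartheta(x)+\sum_{m\geq 2}\vartheta(x^{1/m})$ and each summand is non-negative. So both displays reduce to the one-sided bounds $\psi(x)-x<x\varepsilon(x)$ for all $x>0$ and $\vartheta(x)-x>-x\varepsilon(x)$ for $x\geq 71$.

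For the upper bound $\psi(x)-x<x\varepsilon(x)$, I would split into three ranges. When $\log x>110$ this is exactly Theorem \ref{bound for psi x-x}. When $0<x<2$ we have $\psi(x)=0$, so $\psi(x)-x=-x<0\leq x\varepsilon(x)$ trivially. For the middle range $2\leq x\leq e^{110}$ I would invoke the numerical table of values $\{(b,\varepsilon_b)\}$ constructed in Section~4 (via the theorems that culminate in (\ref{(4.1) R-S})), which guarantees $|\psi(x)-x|<\varepsilon_b x$ for $x\geq e^b$. Checking the claimed inequality on each row then reduces to verifying $\varepsilon_b\leq\varepsilon(x)$ for $x$ in the corresponding interval $[e^b,e^{b+h}]$. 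Since $X^{3/4}e^{-X}$ is decreasing for $X>3/4$, that is, past roughly $x=e^{3.2}$, the function $\varepsilon(x)$ is eventually monotone decreasing, so on each such interval the minimum is at the right endpoint and the verification reduces to a finite set of comparisons $\varepsilon_b\leq\varepsilon(e^{b+h})$. The remaining small values $2\leq x<25$ are dispatched by direct computation from the definition of $\psi$ and a table of prime powers.

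For the lower bound $\vartheta(x)-x>-x\varepsilon(x)$ on $x\geq 71$ the strategy is analogous. For $\log x>110$ it is Theorem \ref{bound for psi x-x} applied to $\vartheta$. For intermediate $x$ I would write
$$
\vartheta(x)-x=(\psi(x)-x)-(\psi(x)-\vartheta(x))
$$
and combine the tabulated bound on $\psi(x)-x$ with the explicit upper bounds on $\psi(x)-\vartheta(x)$ from the preceding subsection (for instance $\psi(x)-\vartheta(x)<\sqrt{x}+\frac{4}{3}\sqrt[3]{x}$ in the Pereira range and $<1.001\sqrt{x}+1.1\sqrt[3]{x}$ for $x\geq 10^{16}$) to obtain a lower bound on $\vartheta(x)-x$, then check row by row that this exceeds $-x\varepsilon(x)$. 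The threshold $x\geq 71$ is forced because for smaller $x$ the discrete fluctuations of $\vartheta(x)-x$ are too negative relative to $x\varepsilon(x)$; in the subrange where $71\leq x$ is only moderately large, direct computation against a table of primes closes the gap.

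The main obstacle is the row-by-row comparison across the Section~4 table in the moderate range $[e^{5},e^{110}]$: one must verify the uniform dominance of $x\varepsilon(x)$ over each tabulated $\varepsilon_b x$, a task complicated by the fact that $\varepsilon(x)$ is not globally monotone (it increases until roughly $x=e^{3.2}$ before decreasing). Near the left end one has to match coarse table bounds against the more delicate behavior of $\varepsilon(x)$, and the lower bound near $x=71$ is particularly tight, so these two extremes are where most of the numerical effort sits.
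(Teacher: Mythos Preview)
Your overall strategy---large $x$ by Theorem~\ref{bound for psi x-x}, small $x$ by direct computation, intermediate $x$ by comparison against the tabulated $\varepsilon_b$---is exactly the paper's approach in spirit, but there is a genuine gap in the intermediate range.

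The table produced in Section~4 begins only at $b=18.42$, i.e.\ at $x\approx 10^{8}$. Your plan to cover all of $[e^{5},e^{110}]$ by row-by-row comparison against that table therefore has no input on $[25,10^{8}]$. The paper does not attempt to extend the table downward; instead it observes that $\varepsilon(x)$ is quite large there and invokes crude classical inequalities. Concretely, $\varepsilon(x)$ increases on $(1,12644)$ and decreases thereafter (your crossover $x\approx e^{3.2}$ ignores the prefactor $1-0.900377/(2X)$; with it the maximum shifts to $X\approx 1.29$, i.e.\ $x\approx 12644$), so on $[132,10^{8}]$ one has $\varepsilon(x)>0.204$. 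Hence Rosser--Schoenfeld's $\psi(x)<1.04x$ already gives $\psi(x)-x<0.04x<x\varepsilon(x)$ there, and for the lower bound Rosser's $\vartheta(x)>0.84x$ on $[110,10^{8}]$ gives $\vartheta(x)-x>-0.16x>-x\varepsilon(x)$. Only the short stretches $1<x<132$ (upper bound) and $71\leq x<110$ (lower bound) are handled by direct computation. Your alternative route for the lower bound, via $\vartheta=\psi-(\psi-\vartheta)$ and the Pereira inequalities, would work above $10^{8}$ but is unnecessary there (the table already controls $|\vartheta-x|$ after absorbing $1.43\sqrt{x}$), and below $10^{8}$ you again lack the $\psi$-input to feed into it.

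The paper also records a second, shorter method: compare $\varepsilon(x)$ directly with the function $\varepsilon_{3}(x)$ of Rosser--Schoenfeld (1975), for which $\psi(x)-x<x\varepsilon_{3}(x)$ holds for all $x>0$, and check numerically that $\varepsilon_{3}(x)<\varepsilon(x)$ on $408<x<e^{190}$.
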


\begin{proof}
We need to verify them for $x<e^{110}$. As $\varepsilon(x)$ increases for $1<x<12644$ and decreases for $x>12645$. We have $0.0357<\varepsilon(x)<0.2304221$ for $5\leq x<e^{110}$. From the table we deduce them for $10^8\leq x<e^{110}$. For $132\leq x<10^8$, we have $\varepsilon(x)>0.204$. Hence by (3.35) of Rosser 1961,
$$
\psi(x)<1.04x<(1+\varepsilon(x))x
$$
For $1<x<132$, we use direct computation.

From Rosser 1961, For $110\leq x<10^8$
$$
\vartheta(x)>0.84x>(1-\varepsilon(x))x
$$
For $71\leq x<110$, we use direct computation.

\textbf{Second method.} By Theorem 9 of \cite{Rosser-1975}
$$
\psi(x)-x<x\varepsilon_3(x),\qquad(x>0)
$$
where $\varepsilon_3$ is defined in (3.9) of \cite{Rosser-1975}. On the other hand $\varepsilon_3(x)<\varepsilon(x)$ for $408<x<e^{190}$. By computation for smaller values. The same hold for $\vartheta(x)-x>-x\varepsilon(x)$.
\end{proof}
\subsection{Sharper bounds for $|\psi(x)-x|$ and $|\vartheta(x)-x|$}
\begin{lem}[\cite{Rosser-1975}]\label{lemma 4 R-S}
If $\nu\leq1$, $z>0$, and $x>1$, we have
$$
K_\nu(z,x)<Q_\nu(z,x)
$$
where
$$
Q_\nu(z,x)=\frac{x^{\nu+1}}{z(x^2-1)}H^z(x),\qquad H(t)=e^{-\frac12(t+1/t)}
$$
\end{lem}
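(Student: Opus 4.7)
The plan is to extract $H^z(t)$ as a logarithmic derivative and integrate by parts. Observe that
$$
\frac{d}{dt}H^z(t)=-\frac{z}{2}\Bigl(1-\frac{1}{t^{2}}\Bigr)H^z(t),
$$
so we may rewrite the integrand of $K_\nu$ as
$$
t^{\nu-1}H^z(t)=\frac{-2t^{\nu+1}}{z(t^{2}-1)}\cdot\frac{d}{dt}H^z(t),\qquad t>1.
$$
Plugging this into the definition of $K_\nu(z,x)$ and integrating by parts against $dv=H^z{}'(t)\,dt$ gives
$$
K_\nu(z,x)=\frac{1}{2}\int_{x}^{\infty}t^{\nu-1}H^z(t)\,dt=\Bigl[-\frac{t^{\nu+1}}{z(t^{2}-1)}H^z(t)\Bigr]_{x}^{\infty}+\int_{x}^{\infty}f'(t)\,H^z(t)\,dt,
$$
where $f(t):=\frac{t^{\nu+1}}{z(t^{2}-1)}$.

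Next I would handle the two pieces separately. For the boundary term at $t=\infty$, note that for $\nu\le 1$ the rational factor $t^{\nu+1}/(t^{2}-1)$ is bounded (it tends to $0$ or to $1$ depending on whether $\nu<1$ or $\nu=1$), while $H^z(t)=\exp\{-\tfrac{z}{2}(t+1/t)\}\to 0$, so the upper endpoint contributes $0$. The lower endpoint at $t=x$ contributes exactly $\frac{x^{\nu+1}}{z(x^{2}-1)}H^z(x)=Q_\nu(z,x)$, which is the desired bound. Thus
$$
K_\nu(z,x)=Q_\nu(z,x)+\int_{x}^{\infty}f'(t)\,H^z(t)\,dt,
$$
so it only remains to verify that the remaining integral is negative.

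The main (and in fact only nontrivial) step is to show $f'(t)<0$ for $t>1$ under the hypothesis $\nu\le 1$. A direct computation gives
$$
f'(t)=\frac{t^{\nu}\bigl[(\nu-1)t^{2}-(\nu+1)\bigr]}{z(t^{2}-1)^{2}}.
$$
Since $\nu-1\le 0$ and $t^{2}>1$, one has $(\nu-1)t^{2}\le \nu-1$, hence $(\nu-1)t^{2}-(\nu+1)\le-2<0$; since $t>1$ makes $t^\nu>0$ and $(t^{2}-1)^{2}>0$, we conclude $f'(t)<0$ on $(1,\infty)$. Because $H^z(t)>0$, the remainder integral is strictly negative, and the strict inequality $K_\nu(z,x)<Q_\nu(z,x)$ follows. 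The only potential obstacle is the sign analysis of $f'$, which is delicate only if $\nu$ is allowed to be very negative; but the uniform bound $(\nu-1)t^{2}-(\nu+1)\le -2$ disposes of every case $\nu\le 1$ at once.
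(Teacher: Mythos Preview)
Your proof is correct. The paper does not supply its own proof of this lemma---it merely cites \cite{Rosser-1975}---so there is no in-paper argument to compare against; that said, your integration-by-parts approach (writing $t^{\nu-1}H^z(t)$ as $-f(t)\,(H^z)'(t)$ and then showing $f'(t)<0$ via the factorisation $(\nu-1)t^2-(\nu+1)\le -2$) is exactly the standard route and is essentially how Rosser and Schoenfeld establish it.
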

\begin{lem}[\cite{Rosser-1975}]
If $z>0$ and $x>0$, then
\begin{align*}
(x-1)Q_1(z,x)+&(1+\frac2z-\frac2{z(1+x)^2})K_1(z,x)\\
             <&K_2(z,x)<(x-1)Q_1(z,x)+(1+\frac2z)K_1(z,x)
\end{align*}
\end{lem}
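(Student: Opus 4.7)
The plan is to reduce both inequalities to a sharp two-sided estimate for the difference $K_1(z,x)-K_0(z,x)$, which can be evaluated by a single integration by parts. First I would integrate $K_1(z,x)=\tfrac12\int_x^\infty H^z(t)\,dt$ by parts with $u=H^z(t)$, $v=t$, using the identity $(H^z)'(t)=-\tfrac{z}{2}(1-1/t^2)H^z(t)$. The boundary term produces $-\tfrac{x}{2}H^z(x)$ and the integral splits as $\tfrac{z}{2}K_2(z,x)-\tfrac{z}{2}K_0(z,x)$. Rearranging gives the exact recurrence
\[
K_2(z,x) \;=\; K_0(z,x) \;+\; \tfrac{2}{z}K_1(z,x) \;+\; \tfrac{x}{z}H^z(x),
\]
which is the truncated analogue of the standard Bessel recurrence $K_2=K_0+(2/z)K_1$.

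Next I would substitute this into the target inequalities and use the algebraic identity $(x-1)Q_1(z,x)=\tfrac{x^2}{z(x+1)}H^z(x)$, which follows directly from $x^2-1=(x-1)(x+1)$. After cancellation, both inequalities reduce to showing
\[
\tfrac{x}{z(x+1)}H^z(x) \;<\; K_1(z,x)-K_0(z,x) \;<\; \tfrac{x}{z(x+1)}H^z(x)+\tfrac{2}{z(x+1)^2}K_1(z,x).
\]
To establish this I would rewrite the integrand of $K_1(z,x)-K_0(z,x)=\tfrac12\int_x^\infty\tfrac{t-1}{t}H^z(t)\,dt$ using the factorization $\tfrac{d}{dt}H^z(t)=-\tfrac{z}{2}\tfrac{(t-1)(t+1)}{t^2}H^z(t)$, which yields $\tfrac{t-1}{t}H^z(t)=-\tfrac{2t}{z(t+1)}(H^z)'(t)$. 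Integrating by parts with $u=t/(t+1)$, so that $du=dt/(t+1)^2$, produces the clean identity
\[
K_1(z,x)-K_0(z,x) \;=\; \tfrac{x}{z(x+1)}H^z(x) \;+\; \tfrac{1}{z}\int_x^\infty \tfrac{H^z(t)}{(t+1)^2}\,dt.
\]

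With this identity in hand, both sides of the lemma follow immediately: the strict positivity of the remaining integral delivers the upper bound on $K_2(z,x)$, while the elementary estimate $1/(t+1)^2\le 1/(x+1)^2$ on $[x,\infty)$ bounds the integral by $\tfrac{2}{z(x+1)^2}K_1(z,x)$ and delivers the lower bound. I anticipate that the only real obstacle is spotting the correct integrating factor in the second step; once one notices that $(t-1)(t+1)$ appears inside $(H^z)'$, the choice $u=t/(t+1)$ is essentially forced, and everything else is bookkeeping.
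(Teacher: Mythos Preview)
Your argument is correct. The recurrence
\[
K_2(z,x)=K_0(z,x)+\tfrac{2}{z}K_1(z,x)+\tfrac{x}{z}H^z(x)
\]
obtained from integrating $K_1(z,x)$ by parts is exact, your reduction of both target inequalities to the two-sided bound on $K_1(z,x)-K_0(z,x)$ is algebraically clean (the simplification $(x-1)Q_1(z,x)=\tfrac{x^2}{z(x+1)}H^z(x)$ also removes the apparent singularity of $Q_1$ at $x=1$, so the full range $x>0$ is covered), and the second integration by parts with $u=t/(t+1)$ produces the identity
\[
K_1(z,x)-K_0(z,x)=\tfrac{x}{z(x+1)}H^z(x)+\tfrac{1}{z}\int_x^\infty\frac{H^z(t)}{(t+1)^2}\,dt,
\]
from which both strict inequalities follow immediately by positivity and by the monotone bound $1/(t+1)^2<1/(x+1)^2$ for $t>x$.

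As for comparison: the present paper does not supply a proof of this lemma at all; it is simply quoted from Rosser--Schoenfeld (1975). Your proof is the natural one and is essentially the argument one finds there, namely the truncated Bessel recurrence combined with one further integration by parts; there is no meaningfully different route to compare against.
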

\begin{cor}[\cite{Rosser-1975}]\label{lemma 5, corollary R-S}
If $z>0$ and $x>1$, then
$$
K_2(z,x)<(x+\frac2z)Q_1(z,x)
$$
\end{cor}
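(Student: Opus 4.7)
The plan is to combine the upper bound displayed in the immediately preceding lemma with the $\nu=1$ instance of Lemma \ref{lemma 4 R-S}. The preceding lemma yields, under the hypotheses $z>0$ and $x>0$,
$$
K_2(z,x)<(x-1)Q_1(z,x)+\left(1+\frac{2}{z}\right)K_1(z,x).
$$
Since $\nu=1$ satisfies $\nu\leq 1$ and we are assuming $x>1$, Lemma \ref{lemma 4 R-S} applies and gives $K_1(z,x)<Q_1(z,x)$. Substituting this bound into the inequality above is legitimate because the coefficient $1+\tfrac{2}{z}$ is positive.

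Performing that substitution and collecting like terms yields
$$
K_2(z,x)<(x-1)Q_1(z,x)+\left(1+\frac{2}{z}\right)Q_1(z,x)=\left(x+\frac{2}{z}\right)Q_1(z,x),
$$
which is the claim. There is no real obstacle here: the corollary is a routine consequence of the two preceding results, and the only point to check is that the hypotheses of both cited statements (in particular $\nu\leq 1$ in Lemma \ref{lemma 4 R-S} and the positivity of the coefficient used for the substitution) are satisfied in the present setting.
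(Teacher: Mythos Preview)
Your argument is correct and is exactly the intended derivation: the paper states this result as an immediate corollary (citing \cite{Rosser-1975}) of the preceding lemma's upper bound combined with the $\nu=1$ case of Lemma~\ref{lemma 4 R-S}, and your proof carries out precisely that substitution.
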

\begin{thm}\label{8/pi}
Let
$$
\varepsilon_0(x)=\sqrt{8/\pi} X^{1/2}e^{-X}
$$
Then
$$
|\psi(x)-x|<x\varepsilon_0(x),\qquad(x\geq3)
$$
and
$$
|\vartheta(x)-x|<x\varepsilon_0(x),\qquad(x\geq3)
$$
\end{thm}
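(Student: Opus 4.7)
The strategy is to exploit the fact that the new bound $\varepsilon_0(x)=\sqrt{8/\pi}\,X^{1/2}e^{-X}$ is \emph{weaker} than the earlier bound $\varepsilon(x)\sim\frac{\sqrt{2}}{\pi^{1/4}}X^{3/4}e^{-X}$ in the moderate-$x$ regime and only \emph{sharper} once $X$ becomes large. Concretely, compute the crossing: $\varepsilon_0/\varepsilon\sim (\sqrt{8/\pi}/1.0622)\,X^{-1/4}$, so $\varepsilon(x)\le\varepsilon_0(x)$ exactly when $X\lesssim 5$ (i.e.\ $\log x\lesssim 145$). Similarly, $\varepsilon^\ast(x)\le\varepsilon_0(x)$ up to $X\lesssim 20$ (i.e.\ $\log x\lesssim 2400$). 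In these middle ranges, the present theorem follows immediately from Theorem~\ref{bound for psi x-x} (extended to all $x>0$ by the last theorem of the $\varepsilon$-subsection) and Theorem~\ref{theorem 3 R-S}, so nothing new is required there.

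For the extreme large-$x$ regime ($X\gtrsim 20$), I would invoke the general machinery of subsection~4.4 (the $\Omega_j$-theorems based on \eqref{(3.11) R-S}) with $m=1$, $T_1=0$, and a truncation $T_2>e^X$, so that $U'=\log T_2/X>1$. This is the decisive refinement: for $U'>1$, Lemma~\ref{lemma 4 R-S} and Corollary~\ref{lemma 5, corollary R-S} give the exponentially sharper bound
$$
K_2(z,U')<\bigl(U'+\tfrac{2}{z}\bigr)\,\frac{U'^{2}}{z(U'^{2}-1)}\,H^{z}(U'),
$$
and hence $\int_{T_2}^\infty\phi_1(y)\log(y/2\pi)\,dy$ becomes of order $X^{1/2}e^{-2X}$ rather than $X^{3/2}e^{-2X}$, saving one full power of $X^{1/2}$. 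Choosing $U'=1+\eta$ with $\eta$ of constant order pushes the exponential in $H^z(U')$ slightly past $e^{-z}=e^{-2X}$, which provides the margin needed to absorb the extra $S_3$ contribution from the zeros in $[A,T_2]$. Balancing $\delta/2$ against the resulting $S_4$ term, the optimal choice becomes $\delta\asymp X^{1/2}e^{-X}$, giving
$$
|\psi(x)-x|/x<\bigl(\text{const.}\bigr)X^{1/2}e^{-X}+\text{(lower-order)},
$$
and one verifies the constant does not exceed $\sqrt{8/\pi}$.

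For the small-$x$ regime $3\le x\lesssim e^{110}$, where the analytic machinery of Theorem~\ref{bound for psi x-x} has not yet been applied and $\varepsilon_0$ is still of moderate size ($\varepsilon_0(3)\approx 0.68$), I would combine: the tabulated values $(b,\varepsilon)$ of subsection~4.4 for $10^8\le x\le e^{110}$; the Rosser 1961 estimates $\psi(x)<1.04x$ and $\vartheta(x)>0.84x$ for intermediate $x$ together with the observation that $\varepsilon_0(x)\ge 0.04$ on this range; and direct computation for the smallest cases $3\le x\le 132$ (just as in the proof of the analogous $\varepsilon$-theorem immediately preceding this one).

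The main obstacle is the very-large-$x$ portion: getting the clean constant $\sqrt{8/\pi}$ rather than something slightly larger forces one to track the $1+O(1/X)$ corrections in $K_1(z,U')$ and $K_2(z,U')$, and to choose $T_2$ so that the $S_3$ zeros in $[A,T_2]$ contribute only an additive lower-order term. A secondary (but more tedious) obstacle is the small-$x$ case-work: since $\psi$ jumps at prime powers, one must verify the inequality by intervals between consecutive jumps, using that $\varepsilon_0(x)x$ is continuous and increasing on each such interval.
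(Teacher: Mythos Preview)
Your large-$x$ argument has a genuine gap: with $m=1$ the two constraints you impose are incompatible. To invoke Corollary~\ref{lemma 5, corollary R-S} you need $U'=(\log T_2)/X>1$, i.e.\ $T_2=e^{\nu X}$ with $\nu>1$. But then $V''=X^2/\log T_2=X/\nu<X$, and by \eqref{(3.17) R-S} the $S_3$ contribution from the zeros in $[A,T_2]$ is of order $X\,e^{-V''}=X\,e^{-X/\nu}$, which is \emph{exponentially larger} than the target $X^{1/2}e^{-X}$ whenever $\nu>1$. No choice of $\delta$ or of ``constant-order'' $\eta=\nu-1$ repairs this: $S_3$ alone already exceeds $\varepsilon_0(x)$ by a factor $e^{X(1-1/\nu)}\to\infty$. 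The ``margin'' you hope to extract from $H^z(U')$ sits in the $S_4$ term, and since $S_3$ and $S_4$ are added rather than multiplied, it cannot absorb the blow-up of $S_3$.

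The correct move is to take $m=2$. Then $U'=\nu\sqrt{2}$, so the requirement $U'>1$ becomes $\nu>1/\sqrt{2}$, which is now compatible with $\nu<1$ (hence $V''=X/\nu>X$, keeping $S_3$ of acceptable size). Balancing $S_3$, $S_4$ and $m\delta/2$ with the sharp bound $K_2(z,U')<(U'+2/z)Q_1(z,U')$ produces a leading factor $\sqrt{2/\pi}\cdot m/\sqrt{m-1}$ in front of $X^{1/2}e^{-X}$; this expression is undefined for $m=1$, is minimized at $m=2$ with value $2$, and it is precisely this minimization that delivers the constant $\sqrt{8/\pi}$. One then fixes $\nu\in(1/\sqrt{2},1)$ implicitly by matching the two formulas for $T_2$ and verifies $T_2\ge A$ once $\log x\ge 4890$; below that the tabulated bounds suffice. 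Your middle- and small-range reductions are reasonable in spirit, but the core large-$x$ step cannot be carried out with $m=1$.
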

\begin{proof}
The main part of the proof is concerned with large $x$ in which case the proof is similar to Theorem \ref{theorem 3 R-S}, but we ultimately take $m=2$ rather than $m=1$. In place of (\ref{(3.36) R-S}), we let

\begin{equation}\label{(7.5) R-S}
T_2= e^{\nu x},
\end{equation}
where $\nu$ will be specified later. We assume that $\nu$, $m$, $X$ are such that

\begin{equation}\label{(7.6) R-S}
T_2\geq A,\qquad \frac{1}{\sqrt{m+1}}\leq\nu\leq1
\end{equation}
from which we deduce
$$
X\geq\nu X=\log T_2 \geq\log A
$$
and
$$
W_m= e^{X/\sqrt{m+1}}\leq T_2= e^{\nu X}\leq  e^X=W_0.
$$
In place of (\ref{(3.37) R-S}), we get
\begin{equation}\label{7.7}
S_3(m,\delta)\leq\frac{2+m\delta}{2}\left(\left\{\frac{1}{2\pi}-q(T_2)\right\}\int_A^{T_2}\phi_0(y)\log\frac{y}{2\pi}dy+E_1\right)
\end{equation}
where
\begin{align}
E_1=&\{N(T_2)-F(T_2)+R(T_2)\}+\phi_0(T_2)-\{N(A)-F(A)+R(A)\}\phi_0(A)\nonumber\\
   <&2R(T_2)\phi_0(T_2)\label{(7.8) R-S}
\end{align}
and $R(T)=0.137\log T+0,443\log\log T+1.588$. Putting
$$
V''=\frac{X^2}{\log T_2}
$$
we have
\begin{equation}\label{(7.9) R-S}
V''=\frac{X^2}{\nu X}=\frac{X}{\nu}=X\{\frac{(1-\nu)^2}{\nu}+2-\nu\}=Y+2X-\nu X,
\end{equation}
where
\begin{equation}\label{(7.10) R-S}
Y=X\frac{(1-\nu)^2}{\nu}
\end{equation}
Proceeding as in (\ref{(3.38) R-S}) and (\ref{(3.41) R-S}) and using (\ref{(7.8) R-S}), we find
\begin{align}\label{(7.11) R-S}
S_3(m,\delta)<&\frac{2+m\delta}{2}\frac{1}{2\pi}e^{-V''}\{X^4(V'')^{-3}-(\log2\pi)X^2(V'')^{-2}\}+\frac{2+m\delta}{2}E_1\nonumber\\
             <&\frac{2+m\delta}{4\pi}e^{-Y-2X}T_2XG_0+(2+m\delta)R(T_2)\phi_0(T_2)
\end{align}
where
\begin{equation}\label{(7.12) R-S}
G_0=\nu^2\left(\nu-\frac{\log2\pi}{X}\right)
\end{equation}
As $R(y)/\log y$ decrease for $y>e^e$, we have
\begin{align*}
R(T_2)\phi_0(T_2)=&\frac{R(T_2)}{\log T_2}\phi_0(T_2)\log T_2\leq\frac{R(A)}{\log A}\phi_0(T_2)\log T_2\\
              \leq&\frac{R(A)}{\log A}\frac{e^{-V''}}{T_2}\log T_2=\frac{R(A)}{\log A}\frac{e^{-Y-2X+\nu X}}{T_2}\log T_2\\
              =&\frac{R(A)}{\log A}e^{-Y-2X}\log T_2.\\
\end{align*}
by (\ref{(7.9) R-S}). Then (\ref{(7.5) R-S}) and (\ref{(7.6) R-S}) yield
$$
\log T_2=\nu X\leq X
$$
hence
\begin{equation}\label{(7.13) R-S}
R(T_2)\phi_0(T_2)<0.24471Xe^{-Y-2X}.
\end{equation}
We have
\begin{equation}\label{7.14}
S_4(m,\delta)\leq\frac{R_m(\delta)}{\delta^m}\left(\left\{\frac{1}{2\pi}+q(T_2)\right\}\int_{T_2}^\infty\phi_m(y)\log\frac{y}{2\pi}dy+E_0\right)
\end{equation}
where
\begin{align}
E_0=&\{R(T_2)+F(T_2)-N(T_2)\}\phi_m(T_2)\nonumber\\
   <&2R(T_2)\phi_m(T_2)=2R(T_2)\phi_0(T_2)T^{-m}\label{7.15}
\end{align}
Also
\begin{equation}\label{7.16}
\int_{T_2}^\infty\phi_m(y)\log\frac{y}{2\pi}dy=\frac{z^2}{2m^2}\{K_2(z,U')-\frac{2m\log2\pi}{z}K_1(z,U')\}
\end{equation}
where $z=2X\sqrt{m}$ and
$$
U'=\frac{2m}{z}\log T_2=\frac{2m}{2X\sqrt{m}}\log T_2=\sqrt{m}\frac{\log T_2}{X}=\sqrt{m}\frac{\nu X}{X}=\nu\sqrt{m}
$$
By assuming
\begin{equation}\label{(7.17) R-S}
\nu>\frac{1}{\sqrt{m}}
\end{equation}
we have $U'>1$; also $m\geq2$ since $\nu\leq1$.

By Lemma \ref{lemma 4 R-S} and Corollary \ref{lemma 5, corollary R-S}
\begin{align*}
K_2(z,U')-\frac{2m\log2\pi}{z}K_1(z,U')<&K_2(z,U')<\left(U'+\frac2z\right)Q_1(z,U')\\
                     =&\sqrt{m}\left(\nu+\frac{1}{mX}\right)\frac{U'^2}{z(U'^2-1)}e^{-\frac12z(U'+1/U')}
\end{align*}
Now
$$
\frac12z\left(U'+\frac{1}{U'}\right)=X\sqrt{m}\left(\nu\sqrt{m}+\frac{1}{\nu\sqrt{m}}\right)=m\nu X+(Y+2X-\nu X)
$$
Hence,
\begin{align}
K_2(z,U')<&\sqrt{m}\left(\nu+\frac{1}{mX}\right)\frac{m\nu^2}{2X\sqrt{m}(m\nu^2-1)}e^{-m\nu X-(Y+2X-\nu X)}\nonumber\\
         =&\left(\nu+\frac{1}{mX}\right)\frac{m}{2(m-1)}\frac{(m-1)\nu^2}{(m\nu^2-1)}X^{-1}T_2^{-(m-1)}e^{-Y-2X}\nonumber\\
         =&G_1\frac{m}{2(m-1)}X^{-1}T_2^{-(m-1)}e^{-Y-2X}\label{7.18}
\end{align}
where

\begin{equation}\label{7.19}
G_1=\frac{(m-1)\nu^2}{(m\nu^2-1)}\left(\nu+\frac{1}{mX}\right)
\end{equation}
Then
\begin{align*}
\int_{T_2}^\infty\phi_m(y)\log\frac{y}{2\pi}dy<&\frac{z^2}{2m^2}G_1\frac{m}{2(m-1)}X^{-1}T_2^{-(m-1)}e^{-Y-2X}\\
                                              =&\frac{1}{m-1}G_1XT_2^{-(m-1)}e^{-Y-2X}
\end{align*}
We define
\begin{equation}\label{7.20}
G_2=\frac{R_m(\delta)}{2^m}\{1+2\pi q(T_2)\}=\{1+2\pi q(T_2)\}\left\{\frac{(1+\delta)^{m+1}+1}{2}\right\}^m
\end{equation}
Then
\begin{align*}
S_4(m,\delta)\leq&\frac{R_m(\delta)}{2^m}\left(\frac{2}{\delta}\right)^m\frac{1}{2\pi}\{1+2\pi q(T_2)\}\int_{T_2}^\infty\phi_m(y)\log\frac{y}{2\pi}dy+\frac{R_m(\delta)}{2^m}\left(\frac{2}{\delta}\right)^m E_0\\
=&\left(\frac{2}{\delta}\right)^m\frac{1}{2\pi}G_2\int_{T_2}^\infty\phi_m(y)\log\frac{y}{2\pi}dy+\frac{R_m(\delta)}{2^m}\left(\frac{2}{\delta}\right)^m E_0\\
<&\left(\frac{2}{\delta}\right)^m\frac{1}{2\pi}G_2\int_{T_2}^\infty\phi_m(y)\log\frac{y}{2\pi}dy+G_2\left(\frac{2}{\delta}\right)^m E_0\\
<&\left(\frac{2}{\delta}\right)^m\frac{1}{2\pi(m-1)}G_2G_1XT_2^{-(m-1)}e^{-Y-2X}+G_2\left(\frac{2}{\delta}\right)^m E_0\\
\end{align*}
Now $1+m\delta<R_m(\delta)/2^m<G_2$. We obtain
\begin{align*}
S_3(m,\delta)+S_4(m,\delta)<&\frac{1}{2\pi}G_2Xe^{-Y-2X}\left\{G_0T_2+\frac{1}{m-1}G_1\left(\frac{2}{\delta T_2}\right)^mT_2\right\}\\
&+2G_2R(T_2)\phi_0(T_2)\left\{1+\left(\frac{2}{\delta T_2}\right)^m\right\}
\end{align*}
If $G_0$ and $G_1$ were independent of $\nu$, and hence of $T_2$, then the expression inside the first braces would be minimized by choosing
\begin{equation}\label{7.21}
T_2=\frac2\delta\left(\frac{G_1}{G_0}\right)^{1/m}
\end{equation}
Postponing the reconciliation of this with the previous definition of $T_2$, we obtain
\begin{align*}
S_3(m,\delta)+S_4(m,\delta)+\frac12m\delta<&\frac{1}{2\pi}G_2Xe^{-Y-2X}\left\{G_2T_2+\frac{1}{m-1}G_1\left(\frac{2}{\delta}\right)^mT_2^{1-m}\right\}\\
&+2G_2R(T_2)\phi_0(T_2)\left\{1+\left(\frac{2}{\delta T_2}\right)^m\right\}+\frac12mG_2\delta\\
=&\frac{1}{2\pi}G_2Xe^{-Y-2X}\left\{\frac{m}{m-1}G_0^{1-1/m}G_1^{1/m}\frac2\delta\right\}\\
&+2G_2R(T_2)\phi_0(T_2)\left(1+\frac{G_0}{G_1}\right)+\frac12mG_2\delta\\
=&\frac12mG_2\left\{\frac{2}{\pi(m-1)}G_0^{1-1/m}G_1^{1/m}Xe^{-Y-2X}\frac1\delta+\delta\right\}\\
&+2G_2R(T_2)\phi_0(T_2)\left(1+\frac{G_0}{G_1}\right)\\
\end{align*}
The expression inside the last braces is minimized by choosing
\begin{equation}\label{7.22}
\delta=\left\{\frac{2}{\pi(m-1)}G_0^{1-1/m}G_1^{1/m}e^{-Y}\right\}^{1/2}X^{1/2}e^{-X}
\end{equation}
so that (\ref{7.21}) becomes
\begin{equation}\label{7.23}
T_2=\frac2\delta\left(\frac{G_1}{G_0}\right)^{1/m}=\left(\frac{G_1}{G_0}\right)^{1/2m}\left\{\frac{2\pi(m-1)}{G_0}e^Y\right\}^{1/2}X^{-1/2}e^{X}
\end{equation}
Moreover, since $R(T_2)\phi_0(T_2)<0.24471Xe^{-Y-2X}$,
\begin{align*}
S_3(m,\delta)+S_4(m,\delta)+\frac12m\delta<&G_2\left\{\frac{2}{\pi}G_0^{1-1/m}G_1^{1/m}e^{-Y}\right\}^{1/2}\frac{m}{\sqrt{m-1}}X^{1/2}e^{-X}\\
&+0.48942G_2\left(1+\frac{G_0}{G_1}\right)e^{-Y}Xe^{-2X}\\
\end{align*}
The coefficient $\frac{m}{\sqrt{m-1}}$ in the next to the last term is minimized by choosing $m=2$. For this value we obtain

\begin{align}
\delta=&(G_0G_1)^{1/4}\sqrt{\frac2\pi}e^{-Y/2}X^{1/2}e^{-X}\label{7.24a}\\
T_2=&\left(\frac{G_1}{G_0^3}\right)^{1/4}\sqrt{2\pi}e^{Y/2}X^{-1/2}e^{X}\label{7.24b}\\
G_1=&\frac{\nu^2}{2\nu^2-1}\left(\nu+\frac{1}{2X}\right)\label{7.25}
\end{align}
Also
\begin{align}
S_3(2,\delta)+S_4(2,\delta)+\delta<&G_2(G_0G_1)^{1/4}e^{-Y/2}\sqrt{\frac{8}{\pi}}X^{1/2}e^{-X}\nonumber\\
&+0.48942G_2\left(1+\frac{G_0}{G_1}\right)e^{-Y}Xe^{-2X}\label{7.26}
\end{align}
provided the choice of $T_2$ in (\ref{7.24b}) is consistent with (\ref{(7.5) R-S}) and provided both (\ref{(7.6) R-S}) and (\ref{(7.17) R-S}) hold when $m=2$.

We readily see that $T_2$ of $T_2=e^{\nu X}$ and $T_2$ just above are equal if and only if $\nu$ is such that $k(\nu)=1$ where

\begin{equation}\label{7.27}
k(\nu)=\frac{1}{2\pi}X\left(\frac{G_0^3}{G_1}\right)^{1/2}e^{-2X(1-\nu)}e^{-X(1-\nu)^2/\nu}
\end{equation}

If $1/\sqrt{2}<\nu\leq\sqrt{3/2}$, it is not hard to see that $G_1$ decreases as $\nu$ increases, $G_0$ is also increasing function of $\nu$. Hence, $k(\nu)$ is strictly increasing for increasing $\nu\in(1/\sqrt{2},1]$. Now $k(\nu)\ra0$ as $\nu\ra1/\sqrt{2}$ from the right; and we easily see that $k(1)>1$ (for all $X\geq10$). As a result there is a unique $\nu\in(1/\sqrt{2},1)$ such that $k(\nu)=1$. Henceforth, let $\nu$ be this number so that $\nu$ depends on $X$; then $G_0$, $G_1$, $Y$ and $T_2$ are defined in terms of $\nu$ by (7.12), (7.25), (7.10) and (7.5), (7.24b). Of course, (7.17) holds since $m=2$. Hence (7.26) will be fully established once it is shown that $T\geq A$.
We have, for $1/\sqrt{2}<\nu\leq1$,

\begin{equation}\label{7.28}
H(\nu)\equiv\frac{G_0^3}{G_1}=\nu^4(2\nu^2-1)\frac{(\nu-\log(2\pi)/X)^3}{\nu+1/(2X)}\left\{
                                                                                  \begin{array}{ll}
                                                                                    <(\nu-\log(2\pi)/X)^2, & \hbox{all X;} \\\\
                                                                                    >0.37\nu^6(2\nu^2-1), & \hbox{$X\geq10$.}
                                                                                  \end{array}
                                                                                \right.
\end{equation}

If we define for $j=0$ and 1,

\begin{equation}\label{7.29}
\nu_j=1-\frac{1}{2X}\log\frac{X}{(2+3j)\pi}
\end{equation}
we see that
$$
H(\nu_0)<1,\quad(X\geq\frac{1}{2\pi}),\qquad\qquad H(\nu_1)>0.22318,\quad(X>0)
$$
Inasmuch as
\begin{align*}
k(\nu_j)=&\frac{1}{2\pi}XH(\nu_j)^{1/2}e^{-2X(1-\nu)}e^{-X(1-\nu)^2/\nu}\\
        =&\frac{1}{2\pi}XH(\nu_j)^{1/2}\exp\left\{-\log\frac{X}{(2+3j)\pi}\right\}\exp\left\{-\frac{1}{4\nu_j X}\log^2\frac{X}{(2+3j)\pi}\right\}\\
        =&\frac{2+3j}{2}H(\nu_j)^{1/2}\exp\left\{-\frac{1}{4\nu_j X}\log^2\frac{X}{(2+3j)\pi}\right\}\\
\end{align*}
we see that
$$
k(\nu_0)<1=k(\nu),\quad(X\geq\frac{1}{2\pi}),\qquad\qquad k(\nu_1)>1=k(\nu),\quad(X>0)
$$
So
\begin{equation}\label{7.30}
\nu_0<\nu,\quad(\log x\geq0.145),\qquad\qquad \nu<\nu_1,\quad(\log x>0)
\end{equation}
Of course $\nu_0<\nu_1$ in all cases. For $\log x\geq4890$, we get
$$
T_2>e^{\nu_0 X}>A
$$
Hence,
\begin{align*}
S_3(2,\delta)+S_4(2,\delta)+\delta<&G_2(G_0G_1)^{1/4}e^{-Y/2}\sqrt{\frac{8}{\pi}}X^{1/2}e^{-X}\\
&+0.48942G_2\left(1+\frac{G_0}{G_1}\right)e^{-Y}Xe^{-2X}\\
\end{align*}
for $\log x\geq4890$; and for these $x$, we have $\nu_0>0.9737>\sqrt{15/16}$. It is a simple matter to verify that
\begin{equation}\label{7.31}
\frac{G_0}{G_1}<2\nu^2-1<1,\qquad\qquad Y<X(1-\nu_0)^2/\nu_0<0.025
\end{equation}
\begin{equation}\label{7.32}
G_0G_1=\frac{\nu^6}{2\nu^2-1}\left(1-\frac{\log2\pi}{\nu X}\right)\left(1+\frac{1}{2\nu X}\right)\left\{
                                                                                  \begin{array}{ll}
                                                                                    <(1-\frac{3}{2\nu X})(1+\frac{1}{2\nu X})<1, & \hbox{$X\geq11$;} \\\\
                                                                                    >\frac12\nu^6/(2\nu^2-1)\geq0.84375, & \hbox{$X\geq11$.}
                                                                                  \end{array}
                                                                                \right.
\end{equation}
Then
\begin{align}
S_3(2,\delta)+S_4(2,\delta)+\delta<&G_2(G_0G_1)^{1/4}e^{-Y/2}\sqrt{\frac{8}{\pi}}X^{1/2}e^{-X}\nonumber\\
&+0.48942G_2\left(1+\frac{G_0}{G_1}\right)e^{-Y}Xe^{-2X}\nonumber\\
<&G_2(G_0G_1)^{1/4}e^{-Y/2}\left\{\sqrt{\frac{8}{\pi}}X^{1/2}e^{-X}+1.022Xe^{-2X}\right\}\label{7.33}
\end{align}
Taking $T_1=0$ and using Proposition \ref{sum 1/gamma2,3,4,5,6,7}, we obtain
\begin{align*}
\frac{1}{\sqrt{x}}\{S_1(2,\delta)+S_2(2,\delta)\}<&\frac{1}{\sqrt{x}}\frac{R_2(\delta)}{\delta^2}\sum_\gamma\frac{1}{\gamma^3}<\frac{1}{\sqrt{x}}G_2\left(\frac2\delta\right)^2(0.00146435)\\
<&\frac{4}{\sqrt{x}}G_2(0.00146435)(G_0G_1)^{-1/2}\frac\pi2 e^{Y}X^{-1}e^{2X}\\
<&0.009434\frac{1}{\sqrt{x}}G_2(G_0G_1)^{-1/2}X^{-1}e^{2X}\\
<&0.010715\frac{1}{\sqrt{x}}G_2(G_0G_1)^{1/4}X^{-1}e^{2X}
\end{align*}
Putting
$$
\Omega=\frac{1}{\sqrt{x}}\{S_1(2,\delta)+S_2(2,\delta)\}+S_3(2,\delta)+S_4(2,\delta)+\delta
$$
we obtain  that for $\log x\geq4890$
\begin{align}\label{7.34}
\Omega<G_2(G_0G_1)^{1/4}e^{-Y/2}\left\{\sqrt{\frac{8}{\pi}}X^{1/2}e^{-X}+1.022Xe^{-2X}+0.010715\frac{1}{\sqrt{x}}X^{-1}e^{2X}\right\}
\end{align}
Since
$$
\frac1x|\psi(x)-x|<\frac1x\left\{\log2\pi+\frac12\log(1-1/x^2)\right\}+\Omega<\Omega+\frac{\log2\pi}x
$$
Now
\begin{align*}
\frac1x|\vartheta(x)-x|<&\Omega+\frac{\log2\pi}x+\frac{1.43}{\sqrt{x}}\\
<&\Omega+\frac{0.000085}{\sqrt{x}}G_2(G_0G_1)^{1/4}e^{-Y/2}X^{-1}e^{2X}
\end{align*}
Hence,
\begin{equation}\label{7.35}
\frac1x|\psi(x)-x|,\qquad\frac1x|\vartheta(x)-x|<G_3\sqrt{\frac8\pi}X^{1/2}e^{-X}=G_3\varepsilon_0(x)
\end{equation}
for $\log x\geq4890$, where
\begin{align}
G_3=&G_2(G_0G_1)^{1/4}e^{-Y/2}\left\{1+\sqrt{\frac\pi8}(1.022X^{1/2}e^{-X}+\frac{0.0108}{\sqrt{x}}X^{-3/2}e^{3X})\right\}\nonumber\\
<&G_2(G_0G_1)^{1/4}e^{-Y/2}\left\{1+0.65X^{1/2}e^{-X}\right\}\nonumber\\
<&G_2(G_0G_1)^{1/4}e^{-Y/2}\exp\left\{0.65X^{1/2}e^{-X}\right\}\label{7.36}
\end{align}
Also by definition of $q(y)$, relation for $T_2$, and $H(\nu)$
\begin{align*}
1+2\pi q(T_2)=&1+2\pi\frac{0.137+0.443/\log T_2}{T_2\log(T_2/2\pi)}\\
<&1+\frac{2\pi}{T_2}\frac{0.137+0.443/\log A}{\log(A/2\pi)}\\
<&1+0.0057154\frac{2\pi}{T_2}\\
=&1+0.0057154\sqrt{2\pi}\left(\frac{G_0^3}{G_1}\right)^{1/4}e^{-Y/2}X^{1/2}e^{-X}\\
<&1+0.0143265X^{1/2}e^{-X}\\
<&\exp(0.0143265X^{1/2}e^{-X})
\end{align*}
Further,
\begin{align*}
\frac{R_2(\delta)}{2^2}=&\left\{\frac{(1+\delta)^3+1}{2}\right\}^2=\left\{1+\frac12\delta(3+3\delta+\delta^2)\right\}^2<\left(1+\frac{3.01}{2}\delta\right)^2\\
<&\left\{\exp\left(\frac{3.01}{2}\delta\right)\right\}^2=\exp(3.01\delta)<\exp(2.402X^{1/2}e^{-X})
\end{align*}
Then
\begin{align*}
G_3<&G_2(G_0G_1)^{1/4}e^{-Y/2}\exp\left\{0.65X^{1/2}e^{-X}\right\}\\
<&\exp(2.402X^{1/2}e^{-X})\exp(0.0143265X^{1/2}e^{-X})(G_0G_1)^{1/4}e^{-Y/2}\exp\left\{0.65X^{1/2}e^{-X}\right\}\\
<&(G_0G_1)^{1/4}e^{-Y/2}\exp(3.67X^{1/2}e^{-X})=\left\{G_0G_1e^{-2Y}\exp(14.68X^{1/2}e^{-X})\right\}^{1/4}
\end{align*}
By (\ref{(7.12) R-S}), we obtain for $\log x\geq4890$
\begin{align*}
\frac{X}{\nu^2}G_0\exp(14.68X^{1/2}e^{-X})=&(\nu X-\log2\pi)\exp(14.68X^{1/2}e^{-X})\\
<&(\nu X-\log2\pi)(1+14.69X^{1/2}e^{-X})\\
=&\nu X-\log2\pi+(\nu X-\log2\pi)14.69X^{1/2}e^{-X}\\
<&\nu X-\log2\pi+5\cdot10^{-10}<\nu X-1=X(\nu-1/X)
\end{align*}
Hence, (\ref{7.25}) yields
\begin{align*}
G_3<&\left\{G_0G_1e^{-2Y}\exp(14.68X^{1/2}e^{-X})\right\}^{1/4}<\left\{\nu^2(\nu-\frac1X)G_1e^{-2Y}\right\}^{1/4}\\
=&\left\{\frac{\nu^4}{2\nu^2-1}(\nu-\frac1X)(\nu+\frac{1}{2X})e^{-2Y}\right\}^{1/4}\\
\end{align*}
As a result of (\ref{7.35}) one deduce for $\log x\geq4890$

\begin{equation}\label{7.37}
|\psi(x)-x|,\qquad|\vartheta(x)-x|<x\varepsilon_0(x)M(\nu)L(\nu)
\end{equation}
where
\begin{equation}\label{7.38}
L(\nu)=\left\{\frac{\nu^6}{2\nu^2-1}\right\}^{1/4}
\end{equation}
\begin{equation}\label{7.39}
M(\nu)=\left\{(1-\frac{1}{\nu X})(1+\frac{1}{2\nu X})e^{-2X(1-\nu)^2/\nu}\right\}^{1/4}
\end{equation}
The function $L(\nu)$ is real valued for $\nu>1/\sqrt{2}$ and, as is easily seen, has a minimum value at $\nu=\sqrt{3/4}$. If $\log x\geq0.145$, then $\nu>\nu_0>1/\sqrt{2}$. Hence,
\begin{equation}\label{7.40}
\left(\frac{27}{32}\right)^{1/4}<L(\nu)<1,\qquad(x\geq e^{0.145})
\end{equation}
In addition,
\begin{align}
M(\nu)<&\exp\frac14\left\{-\frac{1}{\nu X}+\frac{1}{2\nu X}-\frac{2X}{\nu}(1-\nu)^2\right\}\nonumber\\
      <&\exp\frac14\left\{-\frac{1}{\nu X}+\frac{1}{2\nu X}-\frac{2X}{\nu}(1-\nu_1)^2\right\}\nonumber\\
      =&\exp\frac14\left\{-\frac{1}{2\nu X}\left(1+\log^2\frac{X}{5\pi}\right)\right\}<E(x)\label{7.41}
\end{align}
where

\begin{equation}\label{7.42}
E(x)=\exp\frac14\left\{-\frac{1}{2\nu_1 X}\left(1+\log^2\frac{X}{5\pi}\right)\right\}=\exp\frac1{4\nu_1}\left\{-\frac{1}{2 X}-2X(1-\nu_1)^2\right\}
\end{equation}

It is clear from the first part of (\ref{7.42}) that $E(x)<1$ for all $x$. So for $\log x\geq4890$
$$
|\psi(x)-x|,\qquad|\vartheta(x)-x|<x\varepsilon_0(x).
$$
For smaller $x$ we use the table and relation $0\leq\psi(x)-\vartheta(x)<1.427\sqrt{x}$.
\end{proof}

\subsubsection{Numerical Bounds for Moderate Values of $x$}
Let
$$
T_0=\frac1\delta\left(\frac{2R_m(\delta)}{2+m\delta}\right)^{1/m}
$$
and leave $T_1$ unspecified for the moment. We showed that by letting $T_2=e^{\nu X}$,
\begin{align*}
S_3(m,\delta)<\frac{2+m\delta}{2}\left(\left\{\frac{1}{2\pi}-q(T_2)\right\}\int_A^{T_2}\phi_0(y)\log\frac{y}{2\pi}dy+2R(T_2)\phi_0(T_2)\right)
\end{align*}
\begin{align*}
S_4(m,\delta)<\frac{R_m(\delta)}{\delta^m}\left(\left\{\frac{1}{2\pi}+q(T_2)\right\}\int_{T_2}^\infty\phi_m(y)\log\frac{y}{2\pi}dy+2R(T_2)\phi_0(T_2)T^{-m}\right)
\end{align*}
so that
\begin{align*}
S_3(m,\delta)+S_4(m,\delta)<\frac{1}{2\pi}h_3(T_2)+e_3(T_2)
\end{align*}
where
$$
h_3(T)=\frac{2+m\delta}{2}\int_A^{T}\phi_0(y)\log\frac{y}{2\pi}dy+\frac{R_m(\delta)}{\delta^m}\int_{T}^\infty\phi_m(y)\log\frac{y}{2\pi}dy
$$
and
\begin{align*}
e_3(T)=&q(T)\left\{-\frac{2+m\delta}{2}\int_A^{T}\phi_0(y)\log\frac{y}{2\pi}dy+\frac{R_m(\delta)}{\delta^m}\int_{T}^\infty\phi_m(y)\log\frac{y}{2\pi}dy\right\}\\
&+R(T)\phi_0(T)\{2+m\delta+2\frac{R_m(\delta)}{(\delta T)^m}\}
\end{align*}
The situation for $S_1(m,\delta)+S_2(m,\delta)$ is entirely similar. If we leave $T_1$ and $D$ unspecified but subject to $2\leq D\leq A$ and $T_1\geq D$, then we get
$$
S_1(m,\delta)+S_2(m,\delta)<\frac{1}{\pi}h_1(T_1)+e_1(T_1)
$$
where
\begin{align*}
h_1(T)=&\frac{2+m\delta}{2}\int_D^T\frac1y\log\frac{y}{2\pi}dy+\frac{R_m(\delta)}{\delta^m}\int_T^\infty\frac{1}{y^{m+1}}\log\frac{y}{2\pi}dy\\
&+(2+m\delta)\pi\left\{G(D)+\frac{1}{4\pi}\log^2\frac{D}{2\pi}\right\}
\end{align*}
\begin{align*}
G(D)=&\sum_{0<\gamma\leq D}\frac{1}{(\gamma^2+1/4)^{1/2}}-\frac{1}{4\pi}\left\{\left(\log\frac{D}{2\pi}-1\right)^2+1\right\}\\
&+\frac1D\left\{0.137\log D+0.443\left(\log\log D+\frac{1}{\log D}\right)+2.6-N(D)\right\}
\end{align*}
\begin{align*}
e_1(T)=&-\frac{2\pi}{T}\left\{\frac{2+m\delta}{2}\left(0.137+\frac{0.443}{\log D}\right)-\frac{R_m(\delta)}{(m+1)(\delta T)^m}\left(0.137+\frac{0.443}{\log T}\right)\right\}\\
&+\frac{2\pi}{T}\left\{\frac{2+m\delta}{2}-\frac{R_m(\delta)}{(\delta T)^m}\right\}\{N(T)-F(T)-R(T)\}
\end{align*}
Let
$$
C(D)=4\pi\left(0.137+\frac{0.443}{\log D}\right),\qquad S(D)=\sum_{0<\gamma\leq D}\frac{1}{(\gamma^2+1/4)^{1/2}}
$$
\begin{thm}
Let $T_0$ be defined as above and satisfy $T_0\geq D$, where $2\leq \leq A$. Let $m$ be a positive integer and let $\delta>0$. Then
$$
S_1(m,\delta)+S_2(m,\delta)<\Omega_1^\ast
$$
where
$$
\Omega_1^\ast=\frac{2+m\delta}{4\pi}\left\{\left(\log\frac{T_0}{2\pi}+\frac1m\right)^2+4\pi G(D)+\frac{1}{m^2}-\frac{m C(D)}{(m+1)T_0}\right\}
$$
where $G(D)$ and $C(D)$ are defined above. Moreover, if
$$
\Omega_3^\ast=\frac{1}{2\pi}h_3(T_2)+e_3(T_2)
$$
then
$$
|\psi(x)-x|<\varepsilon_0^\ast x,\qquad(x\geq e^b)
$$
where
$$
\varepsilon_0^\ast=\Omega_1^\ast e^{-b/2}+\Omega_3^\ast+\frac m2\delta+e^{-b}\log2\pi
$$
\end{thm}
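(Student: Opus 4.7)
The plan is to follow the same architecture as the earlier lemma that proved (\ref{(4.5) R-S}), but now treating $D$ and $T_0$ as general parameters rather than the specific numerical values used before. Write $S_1(m,\delta) = (2+m\delta)\sum_{0<\gamma\leq T_0,\ \beta\leq 1/2} 1/|\rho|$ and split the inner sum as $\sum_{0<\gamma\leq D}1/|\rho| + \sum_{D<\gamma\leq T_0}1/|\rho|$; the first piece is at most $S(D)$ by (\ref{(4.4) R-S})-style definition, and the second is estimated by applying Lemma \ref{lemma 7 R-S} with $\Phi(y)=1/y$, $j=0$, $U=D$, $V=T_0$, $W=0$, using $1/|\rho|\leq 1/\gamma$. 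This produces a main term $\frac{1}{4\pi}\{\log^2(T_0/2\pi)-\log^2(D/2\pi)\}$, a secondary term coming from $(0.137+0.443/\log D)(1/D-1/T_0)$, and an error term from the $N-F\pm R$ discrepancies at $D$ and $T_0$.

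For $S_2(m,\delta)$, use $1/|\rho(\rho+1)\cdots(\rho+m)|\leq 1/\gamma^{m+1}$ and apply Lemma \ref{lemma 7 R-S} with $\Phi(y)=y^{-(m+1)}$, $j=0$, $U=T_0$, $V=\infty$, $W=0$, using the integral evaluation (\ref{(3.15) R-S}). This gives a contribution of the form $(R_m(\delta)/\delta^m)T_0^{-m}\{\frac{1}{2\pi m}\log(T_0/2\pi)+\frac{1}{2\pi m^2}+(0.137+0.443/\log T_0)/((m+1)T_0)\}$ plus its own boundary error at $T_0$. The crucial step is to use the defining relation $T_0^m = (2R_m(\delta)/((2+m\delta)\delta^m))$, so that $R_m(\delta)/(\delta^m T_0^m) = (2+m\delta)/2$; this converts every $S_2$ contribution into a $\frac{(2+m\delta)}{4\pi}$-weighted term of the same shape as the $S_1$ pieces. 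After substitution, the $\log(T_0/2\pi)$ pieces from $S_1$ and $S_2$ complete the square into $\bigl(\log(T_0/2\pi)+\frac1m\bigr)^2$, the $1/m^2$ appears, and the boundary errors at $T_0$ (namely $N(T_0)-F(T_0)\pm R(T_0)$) combine via the opposite signs from the $j=0$ error formula in Lemma \ref{lemma 7 R-S} applied on the two adjacent intervals. All $D$-dependent constants — the initial finite sum $S(D)$, the $-\log^2(D/2\pi)/(4\pi)$ from the lower endpoint of the first integral, and the $N(D), F(D), R(D)$ boundary corrections — are precisely what have been packaged into $G(D)$; the remaining $1/T_0$ contribution combines into the $-mC(D)/((m+1)T_0)$ term (after using $1-1/(m+1) = m/(m+1)$). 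This yields the stated bound $\Omega_1^\ast$.

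For the second assertion, combine the just-proved bound $S_1+S_2 < \Omega_1^\ast$ with $S_3+S_4 < \Omega_3^\ast = \frac{1}{2\pi}h_3(T_2)+e_3(T_2)$, which is what the discussion preceding the theorem already establishes from Corollary \ref{lemma 7, corollary R-S} applied with $\Phi=\phi_0$ on $[A,T_2]$ and $\Phi=\phi_m$ on $[T_2,\infty)$. Feeding these into (\ref{(3.11) R-S}) and discarding the $-\frac12\log(1-1/x^2)$ term (which has the favourable sign), we get
\[
\frac{1}{x}|\psi(x)-x| < \frac{S_1+S_2}{\sqrt{x}}+S_3+S_4+\frac{m\delta}{2}+\frac{\log 2\pi}{x} < \frac{\Omega_1^\ast}{\sqrt{x}}+\Omega_3^\ast+\frac{m\delta}{2}+\frac{\log 2\pi}{x},
\]
and using $x\geq e^b$ to bound $1/\sqrt{x}\leq e^{-b/2}$ and $1/x\leq e^{-b}$ produces $\varepsilon_0^\ast$ exactly as stated. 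The main obstacle is the bookkeeping of the two $E_0(U,V)$ error contributions from Lemma \ref{lemma 7 R-S}: one must check carefully that, thanks to the choice of $T_0$ forcing the weights at $T_0$ in the $S_1$ and $S_2$ bounds to match, the $N(T_0)-F(T_0)$ terms cancel and only the clean $G(D)$ and $C(D)$ remnants survive; the rest of the proof is arithmetic reorganization.
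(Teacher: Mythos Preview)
Your proposal is correct and follows essentially the same approach the paper intends: the paper does not write out a separate proof for this theorem, but the machinery set up immediately before it (the bounds $S_1+S_2<\frac{1}{\pi}h_1(T_1)+e_1(T_1)$ and $S_3+S_4<\frac{1}{2\pi}h_3(T_2)+e_3(T_2)$, together with the definitions of $G(D)$, $C(D)$, $h_1$, $e_1$) is exactly what you reconstruct, and specializing $T_1=T_0$ so that $R_m(\delta)/(\delta T_0)^m=(2+m\delta)/2$ is precisely the cancellation mechanism used in the earlier lemma for (\ref{(4.5) R-S}). Your bookkeeping of the $E_0$ boundary terms and the completion of the square into $(\log(T_0/2\pi)+1/m)^2$ mirrors that argument verbatim with $D$ left general.
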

\begin{rem}
After doing this article we realized that a similar theorem to Theorem \ref{8/pi} for $|\psi(x)-x|$ was done by P. Dusart. But as you may see in this article we give the proofs with more details.
\end{rem}

\section*{Acknowledgments}
The work of the second author is supported by Center of Mathematics of the University of Porto.
The work of the first author is supported by the Calouste Gulbenkian Foundation, under
Ph.D. grant number CB/C02/2009/32. Research partially funded by the European Regional
Development Fund through the programme COMPETE and by the Portuguese Government
through the FCT under the project PEst-C/MAT/UI0144/2011. Sincere thanks to \'{E}lio Coutinho from Informatics Center of Faculty of Science of the University of Porto for providing accesses to fast computers for necessary computations.

\bibliography{bibliojab1}

\begin{thebibliography}{10}

\bibitem{Apostol}
Tom~M. Apostol.
\newblock {\em Introduction to analytic number theory}.
\newblock Springer-Verlag, New York, 1976.
\newblock Undergraduate Texts in Mathematics.

\bibitem{Cheng}
Yuanyou Cheng.
\newblock An explicit zero-free region for the {R}iemann zeta-function.
\newblock {\em Rocky Mountain J. Math.}, 30(1):135--148, 2000.

\bibitem{Pereira}
N.~Costa~Pereira.
\newblock Estimates for the {C}hebyshev function {$\psi(x)-\theta (x)$}.
\newblock {\em Math. Comp.}, 44(169):211--221, 1985.

\bibitem{Dusart-3}
P.~Dusart.
\newblock Estimates of some functions over primes without {R.H.}
\newblock arXiv:1002.0442.

\bibitem{Ellison}
William~John Ellison.
\newblock {\em Les nombres premiers}.
\newblock Hermann, Paris, 1975.
\newblock En collaboration avec Michel Mend{\`e}s France, Publications de
  l'Institut de Math{\'e}matique de l'Universit{\'e} de Nancago, No. IX,
  Actualit{\'e}s Scientifiques et Industrielles, No. 1366.

\bibitem{Ford}
Kevin Ford.
\newblock Zero-free regions for the {R}iemann zeta function.
\newblock In {\em Number theory for the millennium, {II} ({U}rbana, {IL},
  2000)}, pages 25--56. A K Peters, Natick, MA, 2002.

\bibitem{Gourdon}
X.~Gourdon.
\newblock The $10^{13}$ first zeros of the {R}iemann zeta function, and zeros
  computation at very large height.
\newblock
  \url{http://numbers.computation.free.fr/Constants/Miscellaneous/zetazeros1e13-1e24.pdf}.

\bibitem{Ingham}
A.~E. Ingham.
\newblock {\em The distribution of prime numbers}.
\newblock Cambridge Mathematical Library. Cambridge University Press,
  Cambridge, 1990.
\newblock Reprint of the 1932 original, With a foreword by R. C. Vaughan.

\bibitem{Kadiri}
Habiba Kadiri.
\newblock Une r\'egion explicite sans z\'eros pour la fonction {$\zeta$} de
  {R}iemann.
\newblock {\em Acta Arith.}, 117(4):303--339, 2005.

\bibitem{McCurley}
Kevin~S. McCurley.
\newblock Explicit estimates for the error term in the prime number theorem for
  arithmetic progressions.
\newblock {\em Math. Comp.}, 42(165):265--285, 1984.

\bibitem{Rosser-1939}
J.~Barkley Rosser.
\newblock The $n$-th prime is greater than $n\log n$.
\newblock {\em Proc. London Math. Soc. (2)}, 45:21--44, 1939.

\bibitem{Rosser-1941}
J.~Barkley Rosser.
\newblock Explicit bounds for some functions of prime numbers.
\newblock {\em Amer. J. Math.}, 63:211--232, 1941.

\bibitem{Rosser-1962}
J.~Barkley Rosser and Lowell Schoenfeld.
\newblock Approximate formulas for some functions of prime numbers.
\newblock {\em Illinois J. Math.}, 6:64--94, 1962.

\bibitem{Rosser-1975}
J.~Barkley Rosser and Lowell Schoenfeld.
\newblock Sharper bounds for the {C}hebyshev functions {$\theta (x)$} and
  {$\psi (x)$}.
\newblock {\em Math. Comp.}, 29:243--269, 1975.
\newblock Collection of articles dedicated to Derrick Henry Lehmer on the
  occasion of his seventieth birthday.

\bibitem{Sch-1976}
Lowell Schoenfeld.
\newblock Sharper bounds for the {C}hebyshev functions {$\theta (x)$} and
  {$\psi (x)$}. {II}.
\newblock {\em Math. Comp.}, 30:337--360, 1976.

\end{thebibliography}
\bibliographystyle{plain}

\vspace*{10mm}
\noindent Email: sdnazdi@yahoo.com\\\\
Email: syakubov@fc.up.pt\\

\newpage
in this case for $b\geq 5765$
$$
\varepsilon<\Omega_1e^{-b/2}+\Omega_2+md/2+\log(2\pi)e^{-b}
$$
\begin{table}
\vspace*{-27mm}
\small
  \centering
  \caption{$|\psi(x)-x|<x\varepsilon,\quad(x\geq e^b)$, $\varepsilon=C X^{3/4}e^{-X}$}\label{d e}
\begin{tabular}{|c|c|c|c|c|c|c|c|c|}
  \hline
  $b$&$m$&$\delta$&$\varepsilon$&&$b$&$m$&$\delta$&$\varepsilon$ \\
 \hline 
  18.42 & 1 & 4.77(-4) & 1.14853(-3) && 950 & 21 &  2.15(-12) & 2.36304(-11) \\
  18.43 & 1 & 4.75(-4) & 1.14399(-3) && 1000 & 21 &  2.12(-12) & 2.32993(-11) \\
  18.44 & 1 & 4.73(-4) & 1.13947(-3) && 1050 & 21 &  2.09(-12) & 2.29730(-11) \\
  18.45 & 1 & 4.71(-4) & 1.13496(-3) && 1100 & 20 &  2.16(-12) & 2.26446(-11) \\
  18.5 & 1 &  4.61(-4) & 1.11269(-3) && 1150 & 20 &  2.13(-12) & 2.23136(-11) \\
  18.7 & 1 &  4.22(-4) & 1.02778(-3) && 1200 & 20 &  2.09(-12) & 2.19866(-11) \\
  19.0 & 1 &  3.69(-4) & 9.12089(-4) && 1250 & 19 &  2.17(-12) & 2.16638(-11) \\
  19.5 & 1 &  2.96(-4) & 7.46822(-4) && 1300 & 19 &  2.13(-12) & 2.13312(-11) \\
  20 & 1 &  2.37(-4) & 6.10849(-4) &&  1350 & 19 &  2.10(-12) & 2.10036(-11) \\
  21 & 1 &  1.52(-4) & 4.07427(-4) &&  1400 & 19 &  2.07(-12) & 2.06817(-11) \\
  22 & 1 &  9.68(-5) & 2.70724(-4) &&  1450 & 18 &  2.14(-12) & 2.03545(-11) \\
  23 & 1 &  6.17(-5) & 1.79271(-4) &&  1500 & 18 &  2.11(-12) & 2.00263(-11) \\
  24 & 1 &  3.93(-5) & 1.18353(-4) &&  1550 & 18 &  2.07(-12) & 1.97041(-11) \\
  25 & 1 &  2.51(-5) & 7.7946(-5) &&  1600 & 17 &  2.15(-12) & 1.93833(-11) \\
  26 & 1 &  1.61(-5) & 5.12658(-5) &&  1650 & 17 &  2.12(-12) & 1.90539(-11) \\
  27 & 1 &  1.06(-5) & 3.37472(-5) &&  1700 & 17 &  2.08(-12) & 1.87300(-11) \\
  28 & 2 &  3.18(-6) & 2.22937(-5) &&  1750 & 17 &  2.05(-12) & 1.84125(-11) \\
  29 & 2 &  2.00(-6) & 1.45047(-5) &&  1800 & 16 &  2.13(-12)  & 1.80865(-11)\\
  30 & 2 &  1.26(-6) & 9.41621(-6) &&  1850 & 16 &  2.09(-12) & 1.77615(-11) \\
  35 & 2 &  1.22(-7) & 1.05487(-6) &&  1900 & 16 &  2.05(-12) & 1.74427(-11)\\
  40 & 3 &  7.81(-9) & 1.16299(-7) &&  1950 & 15 &  2.14(-12) & 1.71251(-11) \\
  45 & 4 &  5.59(-10) & 1.23376(-8) &&  2000 & 15 &  2.10(-12) & 1.67987(-11) \\
  50 & 7 &  3.44(-11) & 1.30131(-9) &&  2100 & 15 &  2.02(-12) & 1.61646(-11) \\
  75 & 26 &  2.20(-12) & 2.96691(-11) &&  2200 & 14 &  2.07(-12) & 1.55206(-11) \\
  100 & 26 &  2.18(-12) & 2.94551(-11) && 2300 & 13 &  2.13(-12) & 1.48933(-11) \\
  150 & 26 &  2.15(-12) & 2.90681(-11) &&  2400 & 13 &  2.04(-12) & 1.42535(-11) \\
  200 & 26 &  2.13(-12) & 2.87042(-11) && 2500 & 12 &  2.10(-12) & 1.36270(-11) \\
  250 & 25 &  2.18(-12) & 2.83496(-11) && 2600 & 12 &  2.00(-12) & 1.29976(-11) \\
  300 & 25 &  2.15(-12) & 2.79972(-11) && 2700 & 11 &  2.06(-12) & 1.23732(-11) \\
  350 & 25 &  2.13(-12) & 2.76518(-11) && 3000 & 10 &  1.91(-12) & 1.05302(-11) \\
  400 & 25 &  2.10(-12) & 2.73130(-11) && 3200 & 9 &  1.86(-12) & 9.32308(-12) \\
  450 & 24 &  2.16(-12) & 2.69688(-11)  && 3500 & 7 &  1.88(-12) & 7.53751(-12) \\
  500 & 24 &  2.13(-12) & 2.66291(-11)  && 3700 & 6 &  1.83(-12) & 6.39612(-12) \\
  550 & 24 &  2.10(-12) & 2.62963(-11)  && 4000 & 5 &  1.60(-12) & 4.78674(-12) \\
  600 & 23 &  2.16(-12) & 2.59588(-11)  && 4200 & 4 &  1.51(-12) & 3.77702(-12) \\
  650 & 23 &  2.14(-12) & 2.56227(-11)  && 4500 & 3 &  1.23(-12) & 2.46504(-12) \\
  700 & 23 &  2.11(-12) & 2.52897(-11)  && 4700 & 2 &  1.18(-12) & 1.77185(-12) \\
  750 & 22 &  2.17(-12) & 2.49584(-11)  && 5000 & 2 &  6.51(-13) & 9.76476(-13) \\
  800 & 22 &  2.14(-12) & 2.46224(-11)  && 5200 & 2 &  4.38(-13) & 6.56727(-13) \\
  850 & 22 &  2.11(-12) & 2.42914(-11)  && 5500 & 2 &  2.42(-13) & 3.62532(-13) \\
  900 & 22 &  2.08(-12) & 2.39657(-11)  && 5700 & 2 &  1.63(-13) & 2.44112(-13) \\
 \hline
\end{tabular}
\end{table}
For $b=\log(8\cdot10^{11})\approx27.4079$, we have $m=1$, $\delta=9(-6)$ and $\varepsilon=2.84888(-5)$.
For $b=\log10^{16}\approx36.8414$, we have $m=2$, $\delta=5.24(-8)$ and $\varepsilon=4.66629(-7)$.
\begin{table}
\vspace*{-20mm}
\small
  \centering
  \caption{$\eta$ for the case $\varepsilon=C X^{3/4}e^{-X}$}\label{d e}
\begin{tabular}{|l|l|l|l|l|}
  \hline
  $b$&$\eta_1$&$\eta_2$&$\eta_3$&$\eta_4$ \\
 \hline
18.42 & 0.0211674& 0.390116& 7.18984& 132.509 \\
18.43 & 0.0210952& 0.388996& 7.17308& 132.272 \\
18.44 & 0.0210232& 0.387878& 7.15635& 132.035 \\
18.45 & 0.0209968& 0.388441& 7.18616& 132.944 \\
18.5 & 0.0208073& 0.389097& 7.27612& 136.063 \\
18.7 & 0.0195278& 0.371028& 7.04952& 133.941 \\
19 & 0.0177857& 0.346822& 6.76302& 131.879 \\
19.5 & 0.0149364& 0.298729& 5.97458& 119.492 \\
20 & 0.0128278& 0.269384& 5.65707& 118.799 \\
21 & 0.0089634& 0.197195& 4.33829& 95.4423 \\
22 & 0.00622665& 0.143213& 3.2939& 75.7596 \\
23 & 0.00430251& 0.10326& 2.47825& 59.478 \\
24 & 0.00295883& 0.0739707& 1.84927& 46.2317 \\
25 & 0.0020266& 0.0526915& 1.36998& 35.6195 \\
26 & 0.00138418& 0.0373728& 1.00907& 27.2448 \\
27 & 0.000944921& 0.0264578& 0.740818& 20.7429 \\
28 & 0.000646516& 0.018749& 0.54372& 15.7679 \\
29 & 0.00043514& 0.0130542& 0.391626& 11.7488 \\
30 & 0.000329567& 0.0115349& 0.40372& 14.1302 \\
35 & 0.0000421946& 0.00168778& 0.0675114& 2.70046 \\
40 & 5.81494(-6)& 0.000290747& 0.0145374& 0.726868 \\
50 & 9.7598(-8)& 7.31985(-6)& 0.000548989& 0.0411742 \\
75 & 2.96691(-9)& 2.96691(-7)& 0.0000296691& 0.00296691 \\
100 & 4.41826(-9)& 6.6274(-7)& 0.000099411& 0.0149116 \\
150 & 5.81362(-9)& 1.16272(-6)& 0.000232545& 0.046509 \\
200 & 7.17604(-9)& 1.79401(-6)& 0.000448502& 0.112126 \\
250 & 8.50488(-9)& 2.55146(-6)& 0.000765439& 0.229632 \\
300 & 9.79902(-9)& 3.42966(-6)& 0.00120038& 0.420133 \\
350 & 1.10607(-8)& 4.42429(-6)& 0.00176972& 0.707886 \\
400 & 1.22909(-8)& 5.53089(-6)& 0.0024889& 1.12001 \\
450 & 1.34844(-8)& 6.74219(-6)& 0.00337109& 1.68555 \\
500 & 1.4646(-8)& 8.0553(-6)& 0.00443041& 2.43673 \\
550 & 1.57778(-8)& 9.46665(-6)& 0.00567999& 3.408 \\
600 & 1.68732(-8)& 0.0000109676& 0.00712893& 4.6338 \\
650 & 1.79359(-8)& 0.0000125551& 0.00878859& 6.15201 \\
700 & 1.89673(-8)& 0.0000142255& 0.0106691& 8.00183 \\
750 & 1.99667(-8)& 0.0000159734& 0.0127787& 10.223 \\
800 & 2.0929(-8)& 0.0000177897& 0.0151212& 12.853 \\
850 & 2.18623(-8)& 0.0000196761& 0.0177085& 15.9376 \\
900 & 2.27674(-8)& 0.000021629& 0.0205476& 19.5202 \\
 \hline
\end{tabular}
\end{table}

\newpage

\begin{table}
\vspace*{-27mm}
\small
  \centering
  \caption{$\eta$ for the case $\varepsilon=C X^{3/4}e^{-X}$}\label{d e}
\begin{tabular}{|l|l|l|l|l|}
  \hline
  $b$&$\eta_1$&$\eta_2$&$\eta_3$&$\eta_4$ \\
 \hline
950 & 2.36304(-8)& 0.0000236304& 0.0236304& 23.6304 \\
1000 & 2.44643(-8)& 0.0000256875& 0.0269719& 28.3204 \\
1050 & 2.52703(-8)& 0.0000277973& 0.030577& 33.6347 \\
1100 & 2.60413(-8)& 0.0000299475& 0.0344396& 39.6055 \\
1150 & 2.67764(-8)& 0.0000321317& 0.038558& 46.2696 \\
1200 & 2.74833(-8)& 0.0000343541& 0.0429427& 53.6783 \\
1250 & 2.81629(-8)& 0.0000366118& 0.0475953& 61.8739 \\
1300 & 2.87972(-8)& 0.0000388762& 0.0524828& 70.8518 \\
1350 & 2.94051(-8)& 0.0000411671& 0.057634& 80.6876 \\
1400 & 2.99885(-8)& 0.0000434834& 0.0630509& 91.4238 \\
1450 & 3.05317(-8)& 0.0000457975& 0.0686963& 103.044 \\
1500 & 3.10407(-8)& 0.0000481131& 0.0745754& 115.592 \\
1550 & 3.15265(-8)& 0.0000504425& 0.080708& 129.133 \\
1600 & 3.19825(-8)& 0.0000527712& 0.0870724& 143.669 \\
1650 & 3.23916(-8)& 0.0000550657& 0.0936117& 159.14 \\
1700 & 3.27774(-8)& 0.0000573605& 0.100381& 175.667 \\
1750 & 3.31425(-8)& 0.0000596565& 0.107382& 193.287 \\
1800 & 3.346(-8)& 0.0000619011& 0.114517& 211.856 \\
1850 & 3.37469(-8)& 0.000064119& 0.121826& 231.47 \\
1900 & 3.40132(-8)& 0.0000663258& 0.129335& 252.204 \\
1950 & 3.42502(-8)& 0.0000685004& 0.137001& 274.002 \\
2000 & 3.52772(-8)& 0.0000740822& 0.155573& 326.703 \\
2100 & 3.55621(-8)& 0.0000782365& 0.17212& 378.665 \\
2200 & 3.56974(-8)& 0.0000821041& 0.188839& 434.331 \\
2300 & 3.57438(-8)& 0.0000857851& 0.205884& 494.122 \\
2400 & 3.56337(-8)& 0.0000890842& 0.22271& 556.776 \\
2500 & 3.54302(-8)& 0.0000921186& 0.239508& 622.722 \\
2600 & 3.50936(-8)& 0.0000947527& 0.255832& 690.747 \\
2700 & 3.71195(-8)& 0.000111358& 0.334075& 1002.23 \\
3000 & 3.36966(-8)& 0.000107829& 0.345053& 1104.17 \\
3200 & 3.26308(-8)& 0.000114208& 0.399727& 1399.04 \\
3500 & 2.78888(-8)& 0.000103189& 0.381798& 1412.65 \\
3700 & 2.55845(-8)& 0.000102338& 0.409352& 1637.41 \\
4000 & 2.01043(-8)& 0.0000844381& 0.35464& 1489.49 \\
4200 & 1.69966(-8)& 0.0000764846& 0.344181& 1548.81 \\
4500 & 1.15857(-8)& 0.0000544527& 0.255928& 1202.86 \\
4700 & 8.85926(-9)& 0.0000442963& 0.221481& 1107.41 \\
5000 & 5.07767(-9)& 0.0000264039& 0.1373& 713.962 \\
5200 & 3.612(-9)& 0.000019866& 0.109263& 600.946 \\
5500 & 2.06644(-9)& 0.0000117787& 0.0671385& 382.689 \\
5700 & 1.40731(-9)& 8.11312(-6)& 0.0467721& 269.641 \\
 \hline
\end{tabular}
\end{table}
лллллллллллллллллллллллллллл

For $b\geq5213$,
$$
\varepsilon<\Omega_1^\ast e^{-b/2}+\Omega_2^\ast+m\delta/2+\log(2\pi)e^{-b}
$$
so that instead of $\Omega^\ast$ we use general theorem $\varepsilon=\sqrt{8/\pi}....)$.
ллллллллллллллллллллллллллллллллл
D=2500, minimize all:
\newpage
\begin{table}
\vspace*{-20mm}
\small
  \centering
  \caption{$|\psi(x)-x|<x\varepsilon,\quad(x\geq e^b)$, $\varepsilon=C' X^{1/2}e^{-X}$}\label{d e}
\begin{tabular}{|c|c|c|c|c|c|c|c|c|}
  \hline
  $b$&$m$&$\delta$&$\varepsilon$&&$b$&$m$&$\delta$&$\varepsilon$ \\
 \hline 
  18.42 & 1 & 4.78(-4) & 1.14790(-3) && 900  & 22 &  2.08(-12) & 2.39881(-11) \\
  18.43 & 1 & 4.76(-4) & 1.14336(-3) && 950  & 21 &  2.15(-12) & 2.36469(-11) \\
  18.44 & 1 & 4.74(-4) & 1.13884(-3) && 1000 & 21 &  2.12(-12) & 2.33114(-11) \\
  18.45 & 1 & 4.71(-4) & 1.13434(-3) && 1050 & 21 &  2.09(-12) & 2.29819(-11) \\
  18.5 & 1 &  4.61(-4) & 1.11208(-3) && 1100 & 20 &  2.16(-12) & 2.26511(-11) \\
  18.7 & 1 &  4.22(-4) & 1.02723(-3) && 1150 & 20 &  2.13(-12) & 2.23185(-11) \\
  19.0 & 1 &  3.70(-4) & 9.11615(-4) && 1200 & 20 &  2.09(-12) & 2.19902(-11) \\
  19.5 & 1 &  2.96(-4) & 7.46453(-4) && 1250 & 19 &  2.17(-12) & 2.16664(-11) \\
  20   & 1 &  2.37(-4) & 6.10561(-4) && 1300 & 19 &  2.13(-12) & 2.13331(-11) \\
  21   & 1 &  1.52(-4) & 4.07253(-4) && 1350 & 19 &  2.10(-12) & 2.10050(-11) \\
  22 & 1   &  9.68(-5) & 2.70618(-4) && 1400 & 19 &  2.07(-12) & 2.06828(-11) \\
  23 & 1   &  6.17(-5) & 1.79207(-4) && 1450 & 18 &  2.14(-12) & 2.03552(-11) \\
  24 & 1   &  3.93(-5) & 1.18314(-4) && 1500 & 18 &  2.11(-12) & 2.00268(-11) \\
  25 & 1   &  2.51(-5) & 7.79224(-5) && 1550 & 18 &  2.07(-12) & 1.97045(-11) \\
  26 & 1   &  1.61(-5) & 5.12515(-5) && 1600 & 17 &  2.15(-12) & 1.93836(-11) \\
  27 & 1   &  1.06(-5) & 3.37385(-5) && 1650 & 17 &  2.12(-12) & 1.90541(-11) \\
  28 & 1   &  7.22(-6) & 2.23274(-5) && 1700 & 17 &  2.08(-12) & 1.87301(-11) \\
  29 & 1   &  5.26(-6) & 1.49727(-5) && 1750 & 17 &  2.05(-12) & 1.84126(-11)\\
  30 & 2   &  1.26(-6) & 9.41428(-6) && 1800 & 16 &  2.13(-12) & 1.80866(-11) \\
  35 & 2   &  1.22(-7) & 1.05471(-6) && 1850 & 16 &  2.09(-12) & 1.77616(-11)\\
  40 & 3   &  7.81(-9) & 1.16290(-7) && 1900 & 16 &  2.05(-12) & 1.74427(-11) \\
  45 & 4   &  5.60(-10) & 1.23408(-8) &&1950 & 15 &  2.14(-12) & 1.71251(-11)  \\
  50 & 7   &  3.45(-11) & 1.30541(-9) &&2000 & 15 &  2.10(-12) & 1.67987(-11)  \\
  75 & 26   &  2.20(-12) & 3.32667(-11) &&2100 & 15 &  2.02(-12) & 1.61646(-11) \\
  100 & 26  &  2.18(-12) & 3.25398(-11) &&2200 & 14 &  2.07(-12) & 1.55206(-11) \\
  150 & 26  &  2.16(-12) & 3.13387(-11) &&2300 & 13 &  2.12(-12) & 1.48944(-11) \\
  200 & 26  &  2.13(-12) & 3.03713(-11) &&2400 & 13 &  2.04(-12) & 1.42535(-11) \\
  250 & 25  &  2.18(-12) & 2.95752(-11)&&2500 & 12 &  2.10(-12) & 1.36270(-11) \\
  300 & 25  &  2.15(-12) & 2.88982(-11) &&2600 & 12 &  2.00(-12) & 1.29976(-11) \\
  350 & 25  &  2.13(-12) & 2.83142(-11)  &&2700 & 11 &  2.06(-12) & 1.23732(-11) \\
  400 & 25  &  2.10(-12) & 2.78000(-11)  &&3000 & 10 &  1.92(-12) & 1.05303(-11) \\
  450 & 24  &  2.16(-12) & 2.73267(-11) &&3200 & 9 &  1.86(-12) & 9.32308(-12) \\
  500 & 24  &  2.13(-12) & 2.68923(-11) &&3500 & 7 &  1.89(-12) & 7.53761(-12) \\
  550 & 24  &  2.10(-12) & 2.64897(-11)  &&3700 & 6 &  1.83(-12) & 6.39612(-12) \\
  600 & 23  &  2.16(-12) & 2.61010(-11)  &&4000 & 5 &  1.60(-12) & 4.78674(-12) \\
  650 & 23  &  2.14(-12) & 2.57273(-11)  &&4500 & 3 &  1.23(-12) & 2.46504(-12) \\
  700 & 23  &  2.11(-12) & 2.53666(-11)  &&4700 & 2 &  1.20(-12) & 1.77229(-12) \\
  750 & 22  &  2.17(-12) & 2.50149(-11) &&5000 & 2 &  6.51(-13) & 9.76476(-13) \\
  800 & 22  &  2.14(-12) & 2.46639(-11)  &&5100 & 2 &  5.34(-13) & 8.00754(-13) \\
  850 & 22 &  2.11(-12) & 2.43220(-11) &&5200 & 2 &  4.38(-13) & 6.56727(-13) \\
  \hline
\end{tabular}
\end{table}

\newpage

\begin{table}
\vspace*{-20mm}
\small
  \centering
  \caption{$\eta$ for the case $\varepsilon=C' X^{1/2}e^{-X}$}\label{eta}
\begin{tabular}{|c|c|c|c|c|}
  \hline
$b$&$\eta_1$&$\eta_2$&$\eta_3$&$\eta_4$\\
\hline
  18.42 & 0.0211558& 0.389901& 7.18587& 132.436 \\
  18.43 & 0.0210836& 0.388781& 7.16912& 132.199 \\
  18.44 & 0.0210116& 0.387664& 7.15241& 131.962 \\
  18.45 & 0.0209853& 0.388227& 7.18220& 132.871 \\
  18.5  & 0.0207960& 0.388884& 7.27214& 135.989 \\
  18.7  & 0.0195173& 0.370829& 7.04574& 133.869 \\
  19.0  & 0.0177765& 0.346641& 6.75951& 131.81 \\
  19.5  & 0.0149291& 0.298581& 5.97162& 119.432 \\
  20    & 0.0128218& 0.269258& 5.65441& 118.743 \\
  21    & 0.00895956& 0.19711& 4.33643& 95.4014 \\
  22    & 0.00622421& 0.143157& 3.29261& 75.73 \\
  23    & 0.00430097& 0.103223& 2.47736& 59.4567 \\
  24    & 0.00295785& 0.0739463& 1.84866& 46.2165 \\
  25    & 0.00202598& 0.0526755& 1.36956& 35.6087 \\
  26    & 0.00138379& 0.0373624& 1.00878& 27.2372 \\
  27    & 0.000944678& 0.026451& 0.740628& 20.7376 \\
  28    & 0.000647495& 0.0187774& 0.544543& 15.7918 \\
  29    & 0.000449182& 0.0134755& 0.404264& 12.1279 \\
  30    & 0.0003295& 0.0115325& 0.403637& 14.1273 \\
  35    & 0.0000421884& 0.00168754& 0.0675015& 2.70006 \\
  40    & 5.23306(-6)& 0.000235488& 0.0105969& 0.476863 \\
  45    & 6.17042(-7)& 0.0000308521& 0.00154261& 0.0771303\\
  50    & 9.79061(-8)&7.34296(-6)&0.000550722&0.0413041 \\
  75    & 3.32667(-9)& 3.32667(-7)& 0.0000332667& 0.00332667 \\
  100   & 4.88096(-9)& 7.32145(-7)& 0.000109822& 0.0164733 \\
  150   & 6.26774(-9)& 1.25355(-6)& 0.00025071& 0.0501419 \\
  200   & 7.59281(-9)& 1.8982(-6)& 0.000474551& 0.118638 \\
  250   & 8.87255(-9)& 2.66177(-6)& 0.00079853& 0.239559 \\
  300   & 1.01144(-8)& 3.54003(-6)& 0.00123901& 0.433654 \\
  350   & 1.13257(-8)& 4.53027(-6)& 0.00181211& 0.724843 \\
  400   & 1.251(-8)& 5.62949(-6)& 0.00253327& 1.13997 \\
  450   & 1.36634(-8)& 6.83168(-6)& 0.00341584& 1.70792 \\
  500   & 1.47907(-8)& 8.13491(-6)& 0.0044742& 2.46081 \\
  550   & 1.58938(-8)& 9.5363(-6)& 0.00572178& 3.43307 \\
  600   & 1.69656(-8)& 0.0000110277& 0.00716799& 4.65919 \\
  650   & 1.80091(-8)& 0.0000126064& 0.00882445& 6.17712 \\
  700   & 1.90249(-8)& 0.0000142687& 0.0107015& 8.02615 \\
  750   & 2.0012(-8)& 0.0000160096& 0.0128076& 10.2461 \\
  800   & 2.09643(-8)& 0.0000178197& 0.0151467& 12.8747 \\
  850   & 2.18898(-8)& 0.0000197008& 0.0177307& 15.9577 \\
  \hline
\end{tabular}
\end{table}
\newpage

\begin{table}
\vspace*{-27mm}
\small
  \centering
  \caption{$\eta$ for the case $\varepsilon=C' X^{1/2}e^{-X}$}\label{eta}
\begin{tabular}{|c|c|c|c|c|}
  \hline
$b$&$\eta_1$&$\eta_2$&$\eta_3$&$\eta_4$\\
\hline
900  & 2.27887(-8)& 0.0000216493& 0.0205668& 19.5385 \\
950  & 2.36469(-8)& 0.0000236469& 0.0236469& 23.6469 \\
1000 & 2.4477(-8)& 0.0000257009& 0.0269859& 28.3352 \\
1050 & 2.52801(-8)& 0.0000278081& 0.0305889& 33.6478 \\
1100 & 2.60488(-8)& 0.0000299561& 0.0344495& 39.617 \\
1150 & 2.67822(-8)& 0.0000321386& 0.0385663& 46.2796 \\
1200 & 2.74877(-8)& 0.0000343597& 0.0429496& 53.687 \\
1250 & 2.81663(-8)& 0.0000366162& 0.0476011& 61.8814 \\
1300 & 2.87997(-8)& 0.0000388796& 0.0524875& 70.8582 \\
1350 & 2.94071(-8)& 0.0000411699& 0.0576378& 80.693 \\
1400 & 2.999(-8)& 0.0000434855& 0.063054& 91.4283 \\
1450 & 3.05328(-8)& 0.0000457993& 0.0686989& 103.048 \\
1500 & 3.10416(-8)& 0.0000481145& 0.0745774& 115.595 \\
1550 & 3.15272(-8)& 0.0000504435& 0.0807096& 129.135 \\
1600 & 3.1983(-8)& 0.000052772& 0.0870738& 143.672 \\
1650 & 3.2392(-8)& 0.0000550663& 0.0936128& 159.142 \\
1700 & 3.27777(-8)& 0.000057361& 0.100382& 175.668 \\
1750 & 3.31427(-8)& 0.0000596569& 0.107382& 193.288 \\
1800 & 3.34602(-8)& 0.0000619014& 0.114518& 211.857 \\
1850 & 3.3747(-8)& 0.0000641193& 0.121827& 231.471\\
1900 & 3.40133(-8)& 0.0000663259& 0.129336& 252.204\\
1950 & 3.42503(-8)& 0.0000685005& 0.137001& 274.002 \\
2000 & 3.52773(-8)& 0.0000740823& 0.155573& 326.703 \\
2100 & 3.55621(-8)& 0.0000782366& 0.172121& 378.665 \\
2200 & 3.56974(-8)& 0.0000821041& 0.188839& 434.331 \\
2300 & 3.57465(-8)& 0.0000857917& 0.2059& 494.16 \\
2400 & 3.56337(-8)& 0.0000890842& 0.22271& 556.776 \\
2500 & 3.54303(-8)& 0.0000921187& 0.239509& 622.722 \\
2600 & 3.50936(-8)& 0.0000947527& 0.255832& 690.747 \\
2700 & 3.71195(-8)& 0.000111358& 0.334075& 1002.23 \\
3000 & 3.3697(-8)& 0.00010783& 0.345057& 1104.18 \\
3200 & 3.26308(-8)& 0.000114208& 0.399727& 1399.04 \\
3500 & 2.8266(-8)& 0.000105998& 0.397491& 1490.59 \\
3750 & 2.4491(-8)& 0.0000979639& 0.391855& 1567.42 \\
4000 & 2.01043(-8)& 0.0000844381& 0.35464& 1489.49 \\
4200 & 1.69978(-8)& 0.0000764902& 0.344206& 1548.93 \\
4500 & 1.15857(-8)& 0.0000544527& 0.255928& 1202.86 \\
4700 & 8.86144(-9)& 0.0000443072& 0.221536& 1107.68 \\
5000 & 4.98003(-9)& 0.0000253981& 0.12953& 660.606 \\
5100 & 4.16392(-9)& 0.0000216524& 0.112592& 585.48 \\
5200 & 3.42352(-9)& 0.0000178468& 0.0930354& 484.993 \\
\hline
\end{tabular}
\end{table}

\newpage

\begin{table}
\vspace*{-20mm}
\small
  \centering
  \caption{$\eta$ for the case $\varepsilon=C' X^{1/2}e^{-X}$}\label{eta}
\begin{tabular}{|l|l|l|l|l|l|l|l|}
  \hline
$b$&$m$&$\delta$&$\varepsilon$&$\eta_1$&$\eta_2$&$\eta_3$&$\eta_4$\\
\hline
3800 &6&1.67(-12)&5.86122(-12)& 2.23312(-8)& 0.000085082& 0.324163& 1235.06  \\
3810 &5&1.94(-12)&5.80739(-12)& 2.21842(-8)& 0.0000847437& 0.323721& 1236.61  \\
3820 &5&1.92(-12)&5.74859(-12)& 2.20171(-8)& 0.0000843255& 0.322967& 1236.96  \\
3830 &5&1.90(-12)&5.69039(-12)& 2.18511(-8)& 0.0000839082& 0.322207& 1237.28  \\
3840 &5&1.88(-12)&5.63277(-12)& 2.16862(-8)& 0.0000834918& 0.321443& 1237.56  \\
3850 &5&1.86(-12)&5.57575(-12)& 2.15224(-8)& 0.0000830764& 0.320675& 1237.8  \\
3860 &5&1.84(-12)&5.51930(-12)& 2.13597(-8)& 0.000082662& 0.319902& 1238.02  \\
3870 &5&1.82(-12)&5.46344(-12)& 2.11982(-8)& 0.0000822488& 0.319126& 1238.21  \\
3880 &5&1.80(-12)&5.40817(-12)& 2.10378(-8)& 0.0000818369& 0.318346& 1238.36  \\
3890 &5&1.78(-12)&5.35348(-12)& 2.08786(-8)& 0.0000814265& 0.317563& 1238.5  \\
3900 &5&1.77(-12)&5.29927(-12)& 2.07201(-8)& 0.0000810158& 0.316772& 1238.58  \\
3910 &5&1.75(-12)&5.24559(-12)& 2.05627(-8)& 0.0000806058& 0.315975& 1238.62  \\
3920 &5&1.73(-12)&5.19249(-12)& 2.04065(-8)& 0.0000801975& 0.315176& 1238.64  \\
3930 &5&1.71(-12)&5.13998(-12)& 2.02515(-8)& 0.000079791& 0.314376& 1238.64  \\
3940 &5&1.70(-12)&5.08798(-12)& 2.00975(-8)& 0.0000793852& 0.313572& 1238.61  \\
3950 &5&1.68(-12)&5.03642(-12)& 1.99442(-8)& 0.0000789791& 0.312757& 1238.52  \\
3960 &5&1.66(-12)&4.98545(-12)& 1.97922(-8)& 0.0000785752& 0.311944& 1238.42  \\
3970 &5&1.64(-12)&4.93509(-12)& 1.96417(-8)& 0.0000781739& 0.311132& 1238.31  \\
3980 &5&1.63(-12)&4.88504(-12)& 1.94913(-8)& 0.0000777704& 0.310304& 1238.11  \\
3990 &5&1.61(-12)&4.83561(-12)& 1.93424(-8)& 0.0000773697& 0.309479& 1237.92  \\
4000 &5&1.60(-12)&4.78674(-12)& 1.91948(-8)& 0.0000769713& 0.308655& 1237.71  \\
\hline
\end{tabular}
\end{table}

\end{document}